\mathchardef\mhyphen="2D
\newtheorem{thm}[subsubsection]{Theorem}
\newtheorem{lem}[subsubsection]{Lemma}
\newtheorem{prp}[subsubsection]{Proposition}
\newtheorem{crl}[subsubsection]{Corollary}
\theoremstyle{definition}
           \newtheorem{dfn}[subsubsection]{Definition}
           \newtheorem{exm}[subsubsection]{Example}
           \newtheorem{rem}[subsubsection]{Remark}
           \newtheorem*{Dfn}{Definition}
\newcommand{\AW}{\bigtriangleup} 
\newcommand{\dg}{\mathtt{dg}}
\newcommand{\Cat}{\mathtt{Cat}}
\newcommand{\st}{\mathit{st}}
\newcommand{\CM}{\mathtt{CM}}
\newcommand{\Coc}{\mathtt{Coc}}
\newcommand{\colim}{\operatorname{colim}}
\newcommand{\Com}{\mathtt{Com}}
\newcommand{\Cone}{\operatorname{Cone}}
\newcommand{\EM}{\mathtt{\bigtriangledown}} 
\newcommand{\Fin}{\mathit{Fin}}
\newcommand{\Fun}{\operatorname{Fun}}
\newcommand{\Free}{\mathfrak{F}}
\newcommand{\Hom}{\mathrm{Hom}} 
\newcommand{\chom}{\mathcal{H}\mathit{om}} 
\newcommand{\id}{\mathrm{id}}
\newcommand{\iso}{\mathrm{iso}}
\newcommand{\lax}{\mathrm{lax}}
\newcommand{\Left}{\mathbf{L}}
\newcommand{\LEFT}{\mathtt{Left}} 
\newcommand{\Mod}{\mathtt{Mod}}
\newcommand{\Map}{\operatorname{Map}}
\newcommand{\one}{\mathbf{1}}
\newcommand{\Ob}{\operatorname{Ob}}
\newcommand{\op}{\mathrm{op}}
\newcommand{\Pair}{\mathtt{Pair}}
\newcommand{\Pop}{\mathrm{Pop}}
\newcommand{\QC}{\mathtt{QC}}
\newcommand{\rlarrows}{\stackrel{\rTo}{\lTo}}
\newcommand{\SM}{\mathrm{SM}}
\newcommand{\Set}{\mathtt{Set}}
\newcommand{\sCat}{\mathtt{sCat}}
\newcommand{\sMod}{\mathtt{sMod}}
\newcommand{\sSet}{\mathtt{sSet}}
\newcommand{\Sym}{\operatorname{Sym}}
\newcommand{\triv}{\mathrm{triv}}
\newcommand{\wt}{\widetilde}
\newcommand{\Alg}{\mathtt{Alg}}
\newcommand{\cC}{\mathcal{C}}
\newcommand{\cD}{\mathcal{D}}
\newcommand{\cI}{\mathcal{I}}
\newcommand{\cJ}{\mathcal{J}}
\newcommand{\cK}{\mathcal{K}}
\newcommand{\cL}{\mathcal{L}}
\newcommand{\cM}{\mathcal{M}}
\newcommand{\cO}{\mathcal{O}}
\newcommand{\cP}{\mathcal{P}}
\newcommand{\cQ}{\mathcal{Q}}
\newcommand{\cR}{\mathcal{R}}
\newcommand{\cS}{\mathcal{S}}
\newcommand{\fC}{\mathfrak{C}}
\newcommand{\fN}{\mathfrak{N}}
\newcommand{\F}{\mathbb{F}}
\newcommand{\G}{\mathbb{G}}
\newcommand{\Q}{\mathbb{Q}}
\newcommand{\Mu}{\mathtt{M}}
\begin{document}

\title[]{Rectification of algebras and modules}
\author{Vladimir Hinich}
\address{Department of Mathematics, University of Haifa,
Mount Carmel, Haifa 3498838,  Israel}
\email{hinich@math.haifa.ac.il}
\begin{abstract}
Let $\cO$ be a topological (colored) operad. The Lurie $\infty$-category
of $\cO$-algebras with values in ($\infty$-category of) complexes is compared
to the $\infty$-category underlying the model category of (classical) 
dg $\cO$-algebras. This can be interpreted as a "rectification" result
for Lurie operad algebras. A similar result is obtained for modules
over operad algebras, as well as for algebras over topological PROPs.
\end{abstract}
\maketitle
\section{Introduction}

\subsection{Operad algebras}
In this paper we compare two notions of operad algebras with values in complexes.
Let $\cO$ be a topological colored symmetric operad. The functor
of singular chains with coefficients in a commutative ring $k$ 
converts $\cO$ into an operad in the category of complexes,
so that one has the category $\Alg_\cO(C(k))$ of
$\cO$-algebras in $C(k)$ in the ``conventional" sense: its objects are
complexes $A\in C(k)$ together with $\Sigma_n$-equivariant operations
$$ C_*(\cO(n),k)\otimes A^{\otimes n}\rTo A$$
satisfying the standard compatibilities.

The category $\Alg_\cO(C(k))$ has (sometimes) a 
model category structure with quasiisomorphisms as weak equivalences
 and surjective maps as fibrations. Sometimes it does not have such
 model structure. In any case, one can find a quasiisomorphism of dg
 operads $\cR\to C_*(\cO)$ such that the category of $\cR$-algebras has 
a model structure; moreover, under a mild extra requirement, 
the model category $\Alg_\cR(C(k))$ is independent, up to Quillen
 equivalence, of the dg operad $\cR$. The operad $\cR$ satisfying the above
 properties will be called {\sl homotopically sound},
see~\ref{dfn:hsound} below.
 
\

A topological operad $\cO$ defines, on the other 
hand, an $\infty$-operad $\cO^\otimes$ in the sense of 
Lurie, \cite{L.HA}, Section 2, and an 
$\infty$-category of algebras $\Alg_{\cO}(\QC(k))$ with values in
the symmetric monoidal
$\infty$-category $\QC(k)$ which is the $\infty$-category version 
of the derived 
category of $k$-modules. Our main result Theorem~\ref{thm:rect-alg} 
claims that, given a quasiisomorphism of operads $\cR\to C_*(\cO)$ 
with $\cR$ homotopically sound\footnote{any operad is homotopically
sound if (and only if) $k$ is a field of characteristic zero.}
 the $\infty$-category $\Alg_\cO(\QC(k))$ is equivalent to the 
$\infty$-category underlying the ``classical" model category $\Alg_\cR(C(k))$.
This can be interpreted as a {\sl rectification} result: any Lurie $\cO$-algebra with values in $\QC(k)$ can be presented by a 
strict $\cR$-algebra. In good cases, when $C_*(\cO)$ is homotopically sound, any Lurie $\cO$-algebra can be presented by a strict $\cO$-algebra with values in $C(k)$.

We feel this is an important (though not unexpected) result: 
the notion of operad algebra in Lurie theory is very flexible; however, 
an algebra over an $\infty$-operad is defined by a huge collection 
of coherence data which is difficult to specify. The description of
$\Alg_\cO(\QC(k))$ as a nerve of a model category allows one to
 present $\cO^\otimes$-algebras in $\QC(k)$ and 
their diagrams by strict $\cO$-algebras in complexes.

\subsubsection{}
The mere formulation of Theorem~\ref{thm:rect-alg} requires a model
category structure on the category of algebras over a colored dg operad. An account of the relevant theory is presented in Section~\ref{sec:models-a}. The results of this section are mostly
well-known, at least for colorless operads. 
Our approach here is
very close to the earlier colorless version \cite{haha}. 

Note that there exists a very general result by C.~Berger and I.~Moerdijk, \cite{BM}, on model structure for algebras over color operads. Unfortunately, we were unable to deduce from their
result that all dg operads are admissible in case $k\supset\Q$.
This is why we felt it important to present the colored version 
of the notion of $\Sigma$-splitness used in \cite{haha}.



\subsubsection{Dold-Kan} 

As we mentioned above, a simplicial operad can be converted, via the normalized chains functor, into a dg operad. This is due to the fact 
that the normalized chains functor
$$ C_*:\sSet\rTo C(k)$$
is lax symmetric monoidal, via Eilenberg-MacLane map.

Were it really symmetric monoidal, any strict 
operad algebra over $\cO$ with values in $C(k)$ would automatically
define an $\cO$-algebra in the sense of Lurie. In real life this is
``almost so" --- the functor $C_*$ induces an adjoint
pair between the symmetric monoidal $\infty$-categories
$\Cat_\infty$ and $\dg\Cat$.

This ``almost so" has to be explained, and we do so in Section~\ref{s:SM} which precedes the rectification theorem.

\subsubsection{}
The proof of Theorem~\ref{thm:rect-alg}
follows the idea of Lurie's Theorem 4.1.4.4 of \cite{L.HA} where a similar result for associative algebras with values in a combinatorial monoidal model category is proven.

\subsection{Algebras over PROPs}

Theorem~\ref{thm:rect-alg} allows one to (partially) rectify
algebras more general than algebras over operads, such as, for instance, 
associative bialgebras.

These are algebras over PROPs, that is, symmetric monoidal functors from a 
certain symmetric monoidal category designed to describe the necessary 
structure (PROP), to the category of complexes.

A topological PROP $P$ gives rise to a SM $\infty$-category $P^\otimes$; this leads to
the notion of $P$-algebra with values in complexes as a SM functor 
$P^\otimes\to \QC(k)^\otimes$.

We do not expect such algebras to be always presentable by strict $P$-algebras in
complexes. One can, however, slightly generalize the
notion of strict dg algebra over a PROP --- allowing lax symmetric monoidal functors
to complexes which are ``homotopy SM", see Definition \ref{dfn:prop-alg}\footnote{A very close notion
was used by K.~Costello in~\cite{C} (h-split symmetric monoidal functors).}. 

Our rectification theorem~\ref{thm:rect-alg} implies easily the equivalence
of two approaches, see Corollary~\ref{crl:rect-prop-alg}.

\subsection{Modules}

The notion of module over an operad algebra is very straightforward in the 
``classical" theory. We describe a construction which assigns to a 
topological operad $\cO$ another operad $\Mu\cO$ whose algebras are pairs 
$(A,M)$ where $A$ is an $\cO$-algebra and $M$ is an $A$-module.
A similar construction can be easily defined in the world of $\infty$-operads 
as well. 

J.~Lurie suggests another notion of module over an operad algebra. 
Similarly to the cases of commutative or associative algebras where modules
(or bimodules) form a symmetric monoidal (or simply monoidal) category, 
his version of the $\infty$-category of modules over a fixed $\infty$-operad 
algebra $A$ has an $\cO$-monoidal structure. To have such nicely behaved 
notion, one has to require some very special properties from $\cO$ --- it has to
be {\em coherent}, see \cite{L.HA}, Sect.~3.

In Appendix \ref{app:2mod} we prove that our definition of module coincides with the one suggested by Lurie (with discarded $\cO$-monoidal structure). Our 
rectification result easily implies the rectification for modules, see 
Corollary~\ref{crl:str-mod}.

\subsection{SM adjunction}  Appendix \ref{sec:SMA}
deals with adjunction between two (or more) symmetric monoidal (in what follows: SM)
$\infty$-categories. It turns out that if any of the adjunctions 
is a lax SM functor, all others automatically acquire the same structure. This generalizes a well-known observation   for
conventional SM categories that the functor right
adjoint to a SM functor, is necessarily lax. We believe that this
is an important fact in its own. In this paper we use it to construct
the functor from strict operad algebras to algebras over the corresponding $\infty$-operad.

Appendix~\ref{app:2mod} contains some technical details of the comparison between two notions of module.

\subsection{Acknowledgements} Parts of this paper were written during author's visit to MIT and IHES. I am grateful to these institutions for hospitality and excellent 
working conditions. I am very grateful to J.~Lurie for several useful conversations. I am also very grateful to the referee for numerous suggestions, as well as to F.~Muro for pointing out to a wrong formula in the earler version of the manuscript.

\section{Models for algebras}\label{sec:models-a}

\subsection{Introduction}

In this section we present an account of the 
model category structure on algebras over colored operads.
The results described in this section are mostly well-known,
for the colorless case see \cite{haha,virtual,BM0}, and 
for the colored case \cite{BM} (Berger and Moerdijk consider more general algebras with values in any model category).

Having in mind applications to dg algebras, we wanted to make sure
the important case of algebras in characteristic zero would be covered.
The only proof we are aware of in the colorless case is via the
notion of $\Sigma$-split operads presented in \cite{haha}. This is why
we spend some time to give a colored version of this notion.
 
We also use this section as an opportunity to fix notation for 
colored operads.

\subsection{Colored operads}

Fix a symmetric monoidal category $\cC$. A colored operad $\cO$ in $\cC$  consists of the following data.
\begin{itemize}
\item[1.] A set $[\cO]$ (the set of colors of $\cO$)
\footnote{One can allow $[\cO]$ to be ``big'', as the set of objects of a category.}.
\item[2.] An object $\cO(c,d)$ of $\cC$ (of operations) given for each 
collection of colors $c:I\to [\cO]$ ($I$ is a finite set) and a color $d\in [\cO]$.
\item[3.] A composition map defined for each map of finite sets $f:I\to J$, 
with collections of
colors $c:I\to [\cO],\ d:J\to[\cO],\ e\in[\cO]$. This is a map
\begin{equation}\label{eq:composition}
\cO(d,e)\otimes\bigotimes_{j\in J}\cO(c_j,d(j))\rTo\cO(c,e),
\end{equation}
where $c_j$ denotes the restriction of $c:I\to[\cO]$ to $I_j=f^{-1}(j)$.\item[4.]The unit maps $1_c:\one\rTo\cO(\{c\},c)$ for each $c\in[\cO]$, where $\one$ is the unit object of $\cC$.
\end{itemize}
The composition maps are required to satisfy the following associativity  and unit conditions.
\begin{itemize}
\item[1.]Associativity: for a pair of maps $I\rTo^fJ\rTo^gK$, collections of colors $c:I\to[\cO],\ d:J\to[\cO], e:K\to[\cO]$ and 
$f\in[\cO]$, two compositions
$$\cO(e,f)\otimes\bigotimes_{k\in K}\cO(d_k,e(k))\otimes
\bigotimes_{j\in J}\cO(c_j,d(j))\rTo\cO(c,f),$$
with $c_j$ being the restriction of $c$  to $I_j=f^{-1}(j)$ and
$d_k$ being the restriction of $d$ to $J_k=g^{-1}(k)$, coincide.
\item[2.]Left unit: For any $c:I\to[\cO]$, $J=\{j\}$ consisting of
one element and $d(j)=e\in[\cO]$, the map (\ref{eq:composition})
induces the identity map
$$ \cO(c,e)=\one\otimes\cO(c,e)\rTo^{1_e\otimes\id}\cO(d,e)\otimes
\cO(c,e)\rTo\cO(c,e).$$
\item[3.]Right unit: For $f:I\to J$ bijection with $c=d\circ f$, 
the map (\ref{eq:composition}) induces the identity map
$$\cO(d,e)\rTo^{\id\otimes\bigotimes 1_{d(j)}}\cO(d,e)\otimes
\bigotimes_{j\in J}\cO(c_j,d(j))\rTo\cO(c,e).$$
\end{itemize}

Note that colored operads in our definition are ``symmetric'': the right unit axiom defines a canonical action of the automorphism group 
of $c:I\to[\cO]$ on all $\cO(c,d)$.

Colored operads are also known under other names as ``multicategories" 
or ``pseudo-tensor categories".
The first name is self-explanatory. The reason for the second one 
(due to Beilinson) is that colored operads can be assigned to symmetric 
monoidal categories. Thus, colored operads can be seen as the generalizations of symmetric monoidal  categories. Here are the
details.

Let $\cD$ be a symmetric monoidal category enriched over $\cC$ 
(The case $\cC=\Set$ is as interesting as any other). 
We can put $[\cD]=\Ob(\cD)$ and define
$\cD(c,d)=\Hom_\cD(\otimes_{i\in I}c(i),d)$. This yields a colored operad in $\cC$.

It is not difficult to understand when a colored operad $\cO$ 
comes in the above described manner from a symmetric monoidal category. First of all, any colored operad $\cO$ has an underlying category
(also enriched over $\cC$) denoted $\cO_1$ in the sequel. If we wish $\cO$ to come from a symmetric monoidal (SM) category, the functors $d\mapsto\cO(c,d)$ should be 
representable for each $c:I\to[\cO]$. This condition is not yet sufficient: assuming all functors above are representable by the objects denoted as $\otimes_{i\in I}c(i)$, we obtain for each map $f:I\to J$ of finite sets a canonical map
\begin{equation}
\otimes_{i\in I}c(i)\rTo \otimes_{j\in J}(\otimes_{i\in I_j}c(i)).
\end{equation} 
If these maps are isomorphisms, our colored operad $\cO$ comes from a 
SM category (uniquely defined up to equivalence).
\footnote{By the way, defining a SM category as a colored operad
satisfying the above properties, allows one not to care about associativity or commutativity constraints.}

\subsubsection{Planar versions}

If one replaces finite sets with totally ordered finite sets and the maps of
finite sets with the  monotone maps, we get a multicolor notion of planar
(or asymmetric) operad. This notion generalizes the notion of monoidal category in the same way
as the notion of colored operad generalizes the notion of SM category.  

\subsubsection{Maps of operads}
\label{sss:map-of-op}

Given two colored operads $\cP$ and $\cQ$, a map of operads 
$f:\cP\to\cQ$ is defined
as a map $f:[\cP]\to [\cQ]$ together with a compatible
collection of maps
$$ \cP(c,d)\to \cP(f\circ c,f(d))$$
for each $c:I\to[M]$.

Compatibility means that the above maps preserve units and are compatible with compositions.

If $\cP$ and $\cQ$ are the operads corresponding to SM categories, a map of operads $f:\cP\to\cQ$ is  what is usually called {\em a lax  SM functor}.

\subsubsection{SM functors}

In more detail, let $\cP$ and $\cQ$ be SM categories and let $f:\cP\to\cQ$ be a
map of the corresponding colored operads. This means that a compatible collection
$$ \Hom(\otimes_{i\in I}a_i,b)\to\Hom(\otimes_{i\in I}f(a_i),f(b))$$
is given. By naturality, this is the same as a compatible collection
of morphisms
\begin{equation}\label{PT-functor}
\otimes_{i\in I}f(a_i)\rTo f(\otimes_{i\in I}a_i).
\end{equation}

This is what is usually called a lax SM functor.
A  map of operads $f:\cP\rTo\cQ$ is called a SM functor 
if the maps~(\ref{PT-functor}) are isomorphisms.

\subsubsection{Algebras}

We assume that the base symmetric monoidal category $\cC$ admits colimits and
the tensor product commutes with colimits along  each one  of the arguments.

Let $\cO$ be a colored\footnote{In what follows we will say ``operads'' and ``colorless operads'' instead of ``colored operads'' and ``operads''.}
  operad in $\cC$ and let $\cD$ be a SM category enriched over $\cC$. An $\cO$-algebra in $\cD$
is just a map of operads $A:\cO\to\cD$.

The category of $\cO$-algebras in $\cD$ is denoted as $\Alg_\cO(\cD)$ or just
$\Alg_\cO$ if  $\cD$ can be understood from the context.

The following theorem is very standard, see, for instance, \cite{BM}, 1.2.

\begin{thm}\label{thm:adjalg}
Let $f:\cP\to\cQ$ be a map of (small) operads and let $\cD$ be a SM $\cC$-enriched category having colimits. There is a pair of adjoint
functors
$$ f_!:\Alg_\cP(\cD)\pile{\rTo \\ \lTo}\Alg_\cQ(\cD):f^*$$
where $f^*$ is the forgetful functor, assigning to
$A:\cQ\to\cD$ the composition $f^*(X)=A\circ f:\cP\to\cQ\to\cD$.
\end{thm}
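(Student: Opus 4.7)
The plan is to exhibit $f^*$ concretely, construct $f_!$ by lifting a color-level left Kan extension through the free-algebra monads, and verify the adjunction via universal properties. The forgetful functor $f^*$ requires no real work: given an operad map $A:\cQ\to\cD$, the composition $A\circ f$ satisfies the associativity/unit axioms automatically, so $f^*A:= A\circ f$ defines a $\cP$-algebra in $\cD$. The content is the existence of the left adjoint, and I would build it in three steps.

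First, I would recall the free-algebra adjunction at the collection level. Write $\Coll_\cP(\cD)=\cD^{[\cP]}$ for the category of $[\cP]$-indexed collections and $U_\cP:\Alg_\cP(\cD)\to\Coll_\cP(\cD)$ for the functor sending an algebra $A$ to its values $d\mapsto A(\{d\})$ on singleton color-tuples. Because $\cD$ has all colimits and the tensor product commutes with colimits in each variable, the usual formula
\begin{equation*}
\Free_\cP(X)(d)=\coprod_{c:I\to [\cP]}\cP(c,d)\otimes_{\Aut(c)}\bigotimes_{i\in I}X_{c(i)}
\end{equation*}
defines a functor left adjoint to $U_\cP$, and the adjunction is monadic. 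The same discussion applies to $\cQ$. At the collection level, the set map $f_0:[\cP]\to[\cQ]$ underlying $f$ induces a straightforward adjunction $(f_0)_!\dashv (f_0)^*$ between $\Coll_\cP(\cD)$ and $\Coll_\cQ(\cD)$, and by inspection $U_\cP\circ f^*=(f_0)^*\circ U_\cQ$.

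Second, I would invoke Beck's adjoint lifting theorem, or equivalently construct $f_!$ directly as a reflexive coequalizer. Concretely, any $\cP$-algebra $X$ admits the canonical bar-type presentation $\Free_\cP U_\cP\Free_\cP U_\cP X\rightrightarrows \Free_\cP U_\cP X\to X$, and I define $f_!X$ as the reflexive coequalizer in $\Alg_\cQ(\cD)$ of
\begin{equation*}
\Free_\cQ(f_0)_!U_\cP\Free_\cP U_\cP X\ \rightrightarrows\ \Free_\cQ(f_0)_!U_\cP X,
\end{equation*}
where one map comes from the $\cP$-algebra action on $X$ and the other from the natural transformation $\Free_\cP\to f^*\Free_\cQ(f_0)_!$ that encodes the operad morphism $f$ (i.e., the maps $\cP(c,d)\to\cQ(f\circ c,f(d))$). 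This coequalizer exists because $\Alg_\cQ(\cD)$, being monadic over the cocomplete category $\Coll_\cQ(\cD)$ via a monad that preserves reflexive coequalizers (again by cocontinuity of $\otimes$), admits reflexive coequalizers.

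Finally, I would verify the adjunction identity $\Hom_{\Alg_\cQ}(f_!X,A)\cong\Hom_{\Alg_\cP}(X,f^*A)$ by chasing the two adjunctions $\Free_\cP\dashv U_\cP$, $\Free_\cQ\dashv U_\cQ$, and $(f_0)_!\dashv (f_0)^*$ through the defining coequalizer: a map from $f_!X$ to $A$ is precisely a map $\Free_\cQ(f_0)_!U_\cP X\to A$ equalizing the two arrows above, and unwinding shows this is the same data as a morphism of $\cP$-algebras $X\to f^*A$. I expect the main obstacle to be purely bookkeeping: threading the $\Aut(c)$-equivariance in the free-algebra formula through the coequalizer and checking that the two maps in the reflexive pair are compatible with the operadic compositions in $\cP$ and $\cQ$ under $f$. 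Every step is enabled by the blanket hypothesis that $\otimes$ is cocontinuous in each variable, which makes $U_\cP$ and $U_\cQ$ preserve the colimits needed for the adjoint lifting to go through.
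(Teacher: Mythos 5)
Your proof is correct; the paper itself gives no argument for Theorem~\ref{thm:adjalg}, merely citing Berger--Moerdijk (their Section 1.2), so you have supplied a full proof where the source delegates to a reference. Your route is the standard one and is essentially what underlies the cited result: observe the strict commutation $U_\cP\circ f^*=(f_0)^*\circ U_\cQ$ of forgetful functors with restriction along $f_0:[\cP]\to[\cQ]$; note both $U_\cP,U_\cQ$ are monadic with left adjoints given by the free-algebra formula (your coproduct-over-collections expression agrees with the paper's formula~(\ref{eq:freealg2})); note $(f_0)_!\dashv(f_0)^*$ exists trivially since $\cD$ has coproducts; and then produce $f_!$ by Beck's adjoint lifting theorem, which applies because $\Alg_\cQ(\cD)$ has reflexive coequalizers (the free monad on $\Coll_\cQ(\cD)$ preserves them, since tensor powers preserve reflexive coequalizers under the running cocontinuity hypothesis on $\otimes$). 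The explicit reflexive coequalizer presentation of $f_!X$ you write down is exactly the Beck formula; the only place you could tighten the exposition is the description of the second arrow, which is cleanest to give as the composite obtained from the mate of $U_\cP f^*=(f_0)^*U_\cQ$, i.e.\ the canonical transformation $\Free_\cQ(f_0)_!U_\cP\Rightarrow f^\ast\hspace{-1pt}\mhyphen\hspace{-1pt}\text{free}$ arising by adjunction from $f$, rather than appealing informally to ``encodes the operad morphism.'' None of this affects correctness.
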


In the special case where $\cP=[\cQ]$ is the operad with the same colors as $\cQ$ and with no 
nontrivial operations, the functor $f_!$ is the free algebra functor which is worth of a more detailed description.

Let $V:[\cO]\to\cD$ be a collection of objects of $\cD$  numbered by 
the colors.

The free algebra $\F_\cO(V)$ is the collection of objects 
$\F_\cO(V)_d$, $d\in [\cO]$,
described as follows. Collections $c:I\to[\cO]$ form a groupoid
denoted $\Fin/[\cO]$. To each $c\in\Fin/[\cO]$ we assign the object
\begin{equation}\label{eq:freealg}
\cO(c,d)\otimes\bigotimes_{i\in I} V_{c(i)}.
\end{equation}
This gives rise to a functor
$\Free(V)_d:\Fin/[\cO]\to\cD$; its colimit is the component $\F_\cO(V)_d$
of the free $\cO$-algebra generated by $V$. Note that $\Fin/[\cO]$ is a groupoid.
For $c:I\to[\cO]\in\Fin/[\cO]$ denote $\Sigma_c$ its automorphism group (this is a
 subgroup of the symmetric group $\Sigma_I$). Thus, the free $\cO$-algebra generated by $V$ is given by the formula
\begin{equation}\label{eq:freealg2}
\F_\cO(V)_d=\bigoplus_{c\in\pi_0(\Fin/[\cO])}\cO(c,d)\otimes_{\Sigma_c}
\bigotimes_{i\in I}V_{c(i)},
\end{equation}
where the  direct sum is over a set of representatives of isomorphism classes of
objects in $\Fin/[\cO]$.

The functor $f_!$ carries
the free $\cP$-algebra generated by a collection $V=\{V_c\}_{c\in\cP}$ to the free 
$\cQ$-algebra generated by the collection $d\mapsto \coprod_{c\in[\cP]:f(c)=d}V_d$.

\subsection{DG version}
\label{ss:dg}

From now on we fix a  commutative ring $k$ and 
we study operads and algebras with values in the category $C(k)$ 
of complexes over $k$.

First of all, the category of complexes $C(k)$ admits a model structure, with
quasiisomorphisms as weak equivalences and degree-wise surjective maps as fibrations, see, for example, \cite{haha}.


For a large class of dg operads a model category structure on $\Alg_\cO(C(k))$ can be defined using the adjoint pair of functors
\begin{equation}\label{eq:model-adjunction}
\F_\cO: C(k)^{[\cO]}\pile{\rTo \\ \lTo}\Alg_\cO(C(k)):\G,
\end{equation}
where $\G$ is the forgetful functor and $\F_\cO$ is the free $\cO$-algebra functor.

A map of $\cO$-algebras $f:A\to B$ is called a weak equivalence (resp., a fibration) if $\G(f)$ is a weak equivalence (resp., a fibration). In other words, $f$ is a weak equivalence if for 
each color $c\in[\cO]$ the map $A_c\to B_c$ is a quasiisomorphism of 
complexes. It is a fibration if all maps
$A_c\to B_c$ are surjective. It is called a cofibration if it satisfies
the left lifting property with respect to all trivial fibrations.

\begin{dfn}
An operad $\cO$ in $C(k)$ is called {\sl admissible} 
if the category of algebras $\Alg_\cO(C(k))$ admits a model category structure determined by weak equivalences and fibrations defined as above. 
\end{dfn}

The model category structure on $\Alg_\cO(C(k))$ is cofibrantly generated as it is transferred from the cofibrantly generated model category structure on (collections of)
complexes. In particular, any cofibration is a retract of a transfinite composition of
maps of the form $A\to A\langle x\rangle$ where $x$ is a free variable of a given color $c$, a given degree $d$, and a specified value of $dx\in (A_c)^{d+1}$.  

The operads with a fixed collection of colors $K$ can be described as
the algebras over an appropriate operad whose colors are the finite collections of the elements of $K$. This allows one to define weak equivalence and fibration of a map
of operads with a fixed collection of colors in a usual way. This allows one to define
as well cofibrations for maps of operads via the left lifting property with respect to trivial fibrations. Note that we are not requiring or claiming here
the existence of model structure for such category of dg operads. \footnote{In this we follow \cite{BM}.} 

One has the following
\begin{prp}
A cofibrant operad is admissible.
\end{prp}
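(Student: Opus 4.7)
The plan is to apply the standard transfer principle (Crans/Kan) to the free–forgetful adjunction (\ref{eq:model-adjunction}). Since $C(k)^{[\cO]}$ is a cofibrantly generated model category and the forgetful functor $\G$ preserves filtered colimits, admissibility reduces to checking two hypotheses: smallness of domains of the generating (trivial) cofibrations in $\Alg_\cO(C(k))$, which follows routinely from the corresponding smallness in $C(k)$ together with the explicit formula (\ref{eq:freealg2}) for $\F_\cO$; and the crucial acyclicity condition that for every generating trivial cofibration $K\to L$ in $C(k)^{[\cO]}$ and every pushout diagram
\begin{equation*}
\begin{diagram}
\F_\cO(K) & \rTo & \F_\cO(L) \\
\dTo & & \dTo \\
A & \rTo & B
\end{diagram}
\end{equation*}
in $\Alg_\cO(C(k))$, the underlying map $A\to B$ of collections of complexes is a quasiisomorphism in every color.

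Since the generating trivial cofibrations in $C(k)^{[\cO]}$ are (up to summands) the maps $0\to D^n$ concentrated in a single color, it is enough to treat the case $B=A\langle x\rangle$ with $x$ a free generator of a given color $c$ whose differential is prescribed; the essential content is that $A\to A\langle x\rangle$ is a quasiisomorphism whenever the free variable sits inside an acyclic ``disc'' object. The second step is therefore to construct, in the colored setting, a word-length filtration $A=B_0\to B_1\to B_2\to\cdots$ with colimit $B$, whose successive subquotients can be described in the form
\begin{equation*}
B_n/B_{n-1} \;\simeq\; \bigoplus_{e\in [\cO]} \cO^A_n(c^{\times n},e) \otimes_{\Sigma_n} (L/K)_c^{\otimes n},
\end{equation*}
where $\cO^A_n(c^{\times n},e)$ is an auxiliary object built from $\cO$ and from $A$ using the operadic structure (the ``enveloping operad'' of $A$). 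This filtration is the colored analogue of the one used in \cite{haha}, and writing it down is a bookkeeping exercise with (\ref{eq:freealg2}).

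The third, and genuinely delicate, step is to show that when $\cO$ is cofibrant these subquotients are acyclic whenever $L/K$ is acyclic. This is the place where the cofibrancy of $\cO$, as opposed to mere pointwise cofibrancy of each $\cO(c,d)$, is used: one needs the $\Sigma_c$-actions entering (\ref{eq:freealg2}) to be sufficiently free so that tensoring over $\Sigma_n$ with an acyclic complex yields an acyclic complex. Concretely, the enveloping objects $\cO^A_n$ inherit from a cell-structure presentation of $\cO$ a filtration whose subquotients are induced $\Sigma_n$-modules (in characteristic zero, any $\Sigma_n$-module suffices, which is the source of the footnote about $k\supset\Q$); in the colored version this is the content of the promised $\Sigma$-splitness property. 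I would prove this by first verifying it for a free operad on a $\Sigma$-cofibrant collection, then for pushouts along generating cofibrations in the category of colored operads with fixed colors (which is what a cell operad is built out of), and finally transferring along retracts.

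The main obstacle is precisely this last step: making the colored filtration/$\Sigma$-splitness argument go through in sufficient generality to accommodate arbitrary colored cofibrant operads, and in particular not restricting to the case where the set of colors is finite or where the operad is $\Sigma$-cofibrant in a naive sense. Once this is in place, the acyclicity of each $B_n/B_{n-1}$ combined with the fact that $A\to B$ is a transfinite composition of maps with acyclic cofibers gives the required quasiisomorphism, and Kan's transfer theorem delivers the model structure.
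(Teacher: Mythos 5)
The paper's own proof here is a one-line reference: ``The colorless case is proven in [virtual]; the same reasoning proves the colored case.'' So there is no detailed argument in the paper to compare against directly, but the logical skeleton you propose --- Kan/Crans transfer across the adjunction~(\ref{eq:model-adjunction}), reduction to the acyclicity condition for $A\to A\coprod\F_\cO(M)$ (which is exactly Theorem~\ref{thm:modelham}), and a word-length filtration whose layers involve the enveloping operad and symmetric-group coinvariants --- is indeed the standard route and plausibly what the reference does.

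That said, there is a conceptual confusion and a genuine gap. The confusion: you repeatedly invoke ``$\Sigma$-splitness'' as the ingredient you need from cofibrancy of $\cO$. In this paper $\Sigma$-splitness (Subsection~\ref{ss:sigmasplit}) is a specific algebraic splitting $t:\cO\to\cO^\Sigma$ of $\pi$ satisfying (SPL), (INV), (COM); it is used to build an \emph{explicit chain homotopy} for a different class of operads, and it is logically independent of cofibrancy. What your filtration argument actually needs is $\Sigma$-\emph{cofibrancy}, i.e.\ that the relevant $\Sigma_c$-complexes are projectively cofibrant (or at least induced/free), so that $-\otimes_{\Sigma_c}(L/K)^{\otimes c}$ preserves acyclicity. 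Those are different notions with different proofs, and lumping them together obscures where the difficulty lies.

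The gap: the acyclicity condition in Theorem~\ref{thm:modelham} has to hold for an \emph{arbitrary} $\cO$-algebra $A$, not a cofibrant one. Your filtration layers are controlled by the enveloping operad $\cO_A$, and you tacitly assume its $\Sigma_c$-components are cofibrant (or induced) $\Sigma_c$-modules. But the paper itself (end of Subsection on the pushout of algebras, citing \cite{BM0}, 5.4) only records that $\cO_A$ is $\Sigma$-cofibrant when $\cO$ is $\Sigma$-cofibrant \emph{and $A$ is cofibrant}. For non-cofibrant $A$ this is not automatic: the layers involve $\cO(c\sqcup d,e)\otimes_{\Sigma_d}A_d^{\otimes}$ and the factors $A_d$ need not be $k$-cofibrant. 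What saves the day for a genuinely \emph{cofibrant} operad is stronger than $\Sigma$-cofibrancy: e.g.\ for a free operad on a collection, the leaves of a labeled tree are distinct so $\Sigma_c$ acts freely on a basis regardless of $A$, and this property should be propagated inductively through the cell attachments making up a cofibrant $\cO$ and then along retracts. This is precisely the nontrivial content you would have to supply, and it is not the same as (nor does it reduce to) worries about the size of the color set or about $\Sigma$-cofibrancy ``in a naive sense,'' which is where you locate the main obstacle. As written, the key acyclicity step is asserted rather than proved.
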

The colorless case is proven in \cite{virtual}. The same reasoning proves the colored case.
\qed

Another class of admissible operads ($\Sigma$-split operads) is described in Subsection \ref{ss:sigmasplit}. It includes, for instance, all planar operads, or all operads over $k\supset\Q$.

Note the following criterion of admissibility.

\begin{thm}\label{thm:modelham}
An operad $\cO$ in $C(k)$ is admissible if and only
if for any 
$\cO$-algebra $A$ and for any collection of contractible cofibrant 
complexes $M=\{M_c\}$, $c\in[\cO]$, the natural map
\begin{equation}\label{eq:ham}
A\rTo A\coprod \F_\cO(M)
\end{equation}
is a weak equivalence.
\end{thm}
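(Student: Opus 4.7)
The plan is to apply Quillen's transfer theorem along the free-forgetful adjunction (\ref{eq:model-adjunction}). The category $\Alg_\cO(C(k))$ is cocomplete, and the forgetful functor $\G$ preserves filtered colimits, since the monad $\G\F_\cO$ is finitary: operadic composition is built from tensor products, which commute with filtered colimits in $C(k)$. Hence the small object argument applies in $\Alg_\cO(C(k))$ with generating (trivial) cofibrations $\F_\cO(I)$ and $\F_\cO(J)$, where $I$ and $J$ are the standard generators of $C(k)^{[\cO]}$. The only hypothesis of the transfer theorem remaining to verify is that every relative $\F_\cO(J)$-cell complex is a weak equivalence, and the task is to show this is equivalent to the condition in (\ref{eq:ham}).

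For necessity, if $\cO$ is admissible and $M=\{M_c\}_{c\in[\cO]}$ is a collection of contractible cofibrant complexes, then $0\to M$ is a trivial cofibration in $C(k)^{[\cO]}$. Applying the left Quillen functor $\F_\cO$ produces a trivial cofibration $\F_\cO(0)\to \F_\cO(M)$ out of the initial algebra, whose pushout along the unique map $\F_\cO(0)\to A$ is exactly $A\to A\coprod \F_\cO(M)$. This map is thus a trivial cofibration, in particular a weak equivalence.

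For sufficiency, I would observe that the generating trivial cofibrations of $C(k)^{[\cO]}$ are the maps $0\to D^n[c]$, one for each $n\in\Z$ and each color $c$, where $D^n[c]$ denotes the contractible cofibrant disk complex placed at the color $c$. A pushout of $\F_\cO$ applied to a coproduct $\bigoplus_i (0\to D^{n_i}[c_i])$ along a map $\F_\cO(0)\to A$ takes the form $A\to A\coprod \F_\cO(M)$, with $M=\bigoplus_i D^{n_i}[c_i]$ still contractible and cofibrant, so every such pushout is a weak equivalence by hypothesis. A transfinite composition of such weak equivalences remains a weak equivalence since $\G$ commutes with filtered colimits and filtered colimits in $C(k)^{[\cO]}$ are exact, hence preserve quasi-isomorphisms. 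The transfer theorem then supplies the required model structure.

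The principal obstacle is precisely the verification of the transfer condition; the content of the theorem is that everything reduces to the single pushout computation (\ref{eq:ham}). Verifying (\ref{eq:ham}) itself in concrete situations — as in Subsection \ref{ss:sigmasplit} for $\Sigma$-split operads, or for cofibrant operads — involves filtering $A\coprod \F_\cO(M)$ by word length and using the contractibility of $M$ to see each graded piece as contractible, but that deeper analysis belongs to specific admissibility proofs and is not part of the present equivalence.
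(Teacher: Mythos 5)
Your proof is correct and takes essentially the same route as the paper's (the paper refers to \cite{haha}, whose argument is the same transfer-theorem verification: necessity via $\F_\cO$ being left Quillen, sufficiency by reducing relative $\F_\cO(J)$-cell complexes to the pushout condition~(\ref{eq:ham}) and closing under transfinite composition using exactness of filtered colimits in $C(k)$).
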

The proof for colorless operads is given  in \cite{haha}. The same reasoning proves the colored case. \qed

One immediately sees that it is sufficient to check that 
(\ref{eq:ham}) is a weak equivalence for $M=\{M_c\}$ with $M_c=0$ for $c\ne c_0$, and $M_{c_0}$ contractible cofibrant.

\subsection{Change of operad}
\label{ss:cho}
Recall that a map $f:\cP\rTo\cQ$ of operads gives rise to a pair of 
adjoint functors
\begin{equation}
f_!:\Alg_\cP\rlarrows\Alg_\cQ:f^*
\end{equation}
where $f^*$ forgets a part of the structure. One has the following

\begin{thm}
Assume $\cP$ and $\cQ$ are admissible. Then the pair of adjoint 
functors $(f_!,f^*)$ is a Quillen pair.
\end{thm}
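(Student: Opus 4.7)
The plan is to establish the Quillen pair property by verifying the standard criterion that the right adjoint $f^*$ preserves fibrations and trivial fibrations; this immediately implies that $f_!$ preserves cofibrations and trivial cofibrations, which is the definition of a Quillen pair.

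The hypothesis of admissibility is doing almost all of the work here. By admissibility of $\cP$ and $\cQ$, the model structures on $\Alg_\cP(C(k))$ and $\Alg_\cQ(C(k))$ are both transferred from the componentwise model structure on collections of complexes via the free-forgetful adjunction (\ref{eq:model-adjunction}). Concretely, a map $g:A\to B$ in $\Alg_\cQ$ is a fibration (resp.\ weak equivalence) precisely when, for every color $d\in[\cQ]$, the component $g_d:A_d\to B_d$ is a surjection (resp.\ quasi-isomorphism) of complexes, and analogously for $\Alg_\cP$.

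The key observation is that $f^*$ is purely forgetful: at the level of underlying collections of complexes it is simply reindexing, namely $(f^*A)_c=A_{f(c)}$ for $c\in[\cP]$, so that $(f^*g)_c=g_{f(c)}$. Consequently, if $g$ is a fibration (resp.\ trivial fibration) in $\Alg_\cQ$, then $g_d$ is a surjection (resp.\ trivial fibration) of complexes for every $d\in[\cQ]$; specializing to $d=f(c)$ for $c\in[\cP]$ shows that every component of $f^*g$ has the required property, hence $f^*g$ is a fibration (resp.\ trivial fibration) in $\Alg_\cP$.

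Since both conditions follow instantly from the colorwise characterization of (trivial) fibrations, there is no real obstacle; the theorem is essentially a bookkeeping consequence of the fact that both model structures are defined via the same forgetful mechanism to complexes, and the map $f:[\cP]\to[\cQ]$ intertwines these forgetful functors. The only nontrivial input is the existence of both model structures, which is exactly what admissibility provides.
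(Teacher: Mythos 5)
Your proof is correct and takes exactly the same approach as the paper's one-line argument: both model structures are transferred, so $f^*$ being a componentwise reindexing immediately preserves fibrations and trivial fibrations. You have simply spelled out the details that the paper leaves implicit.
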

\begin{proof}
The forgetful functor obviously preserves fibrations (surjective maps)
and trivial fibrations (surjective quasiisomorphisms).
\end{proof}

One can expect the pair $(f_!,f^*)$ to be a Quillen equivalence under some favorable conditions.

Recall that for a dg operad $\cO$ we denote by $\cO_1$ the underlying dg category
which remembers only unary operations of $\cO$. Passing to the zeroth cohomology of all
$\Hom$ complexes, we get a category $H^0(\cO_1)$.

\begin{dfn}
\begin{itemize}
\item[1.]A map $f:\cP\to\cQ$ is called {\sl a weak equivalence} if
\begin{itemize}
\item[a.] For each $c:I\to[\cP]$
and $d\in[\cP]$ the morphism $\cP(c,d)\to \cQ(f\circ c,f(d))$ is a 
quasiisomorphism.
\footnote{In case $f$ induces bijection on the colors, our notion of weak equivalence 
coincides with the one mentioned in \ref{ss:dg}.}
\item[b.] The functor $H^0(f_1):H^0(\cP_1)\to H^0(\cQ_1)$ is an equivalence 
of categories.
\end{itemize}
\item[2.]A map $f:\cP\to\cQ$ is {\sl a strong equivalence} if instead of (a) 
the following 
stronger condition is fulfilled.
\begin{itemize}
\item[a$^\prime$.] Let $c:I\to[\cP]$ be a collection of colors and $d\in[\cP]$.
Choose a decomposition $c=c'\circ p$ 
\begin{equation}
I\rTo^p J\rTo^{c'}\protect[\cP]
\end{equation}
with $p$ surjective, and let $G$ be the subgroup of automorphisms of $I$ over $J$.
The map
\begin{equation}
\cP(c,d)\otimes_Gk \rTo  \cQ(f\circ c,f(d))\otimes_Gk
\end{equation}
is a quasiisomorphism for all $c,d$ and $p$.
\end{itemize}
\end{itemize}
\end{dfn}

\begin{dfn}An operad $\cO$ in $C(k)$ is called $\Sigma$-cofibrant if for 
each $c:I\to[\cO]$ and $d\in[\cO]$ with $G$ the group of automorphisms of $c$, the complex
$\cO(c,d)$ is a (projectively) cofibrant complex of $G$-modules. 
\end{dfn}

\begin{rem}
Weak equivalence of operads implies their strong equivalence in 
case they are $\Sigma$-cofibrant. 
\end{rem}

\begin{thm}
A strong equivalence of admissible operads $f:\cP\rTo\cQ$ gives rise 
to a Quillen equivalence $(f_!,f^*)$. 
\end{thm}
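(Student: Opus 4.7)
The previous theorem already gives a Quillen pair $(f_!,f^*)$; to promote it to a Quillen equivalence it suffices to verify two standard conditions: (A) $f^*$ reflects weak equivalences, and (B) the unit $\eta_A\colon A\to f^*f_!A$ is a weak equivalence for every cofibrant $\cP$-algebra $A$. (Every object of $\Alg_\cP(C(k))$ and $\Alg_\cQ(C(k))$ is fibrant, since fibrations are surjections and the map to the terminal algebra is componentwise surjective.)

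For (A), a map $\phi\colon B\to B'$ in $\Alg_\cQ$ is a weak equivalence iff $\phi_d$ is a quasi-isomorphism for every $d\in[\cQ]$. By condition (b) in the definition of weak equivalence of operads, each $d\in[\cQ]$ is isomorphic in $H^0(\cQ_1)$ to $f(c)$ for some $c\in[\cP]$; lifting such an isomorphism to cycles in $\cQ_1$ yields chain-homotopy equivalences $B_d\simeq B_{f(c)}$ natural in $B$. Hence if $f^*\phi$ is a weak equivalence, so is $\phi$.

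For (B) I run the usual cell induction. A cofibrant $\cP$-algebra is a retract of a transfinite composition of pushouts along generating cofibrations $\F_\cP(V_0)\hookrightarrow\F_\cP(V)$, and the class of algebras for which $\eta_A$ is a weak equivalence is closed under retracts and filtered colimits ($f_!$ preserves all colimits as a left adjoint; $f^*$ preserves filtered colimits, which are computed on underlying complexes). It therefore suffices to check
\begin{itemize}
\item[(i)] $\eta_{\F_\cP(V)}$ is a weak equivalence whenever $V$ is a cofibrant collection of complexes;
\item[(ii)] if $\eta_A$ is a weak equivalence and $A'=A\coprod_{\F_\cP(V_0)}\F_\cP(V)$ is a pushout along a cofibration of cofibrant collections, then $\eta_{A'}$ is a weak equivalence.
\end{itemize}
For (i), formula~(\ref{eq:freealg2}) identifies $\eta_{\F_\cP(V)}$ at color $d$ with the direct sum, over isomorphism classes $[c_P]$ of $c_P\colon I\to[\cP]$, of maps
\begin{equation*}
\cP(c_P,d)\otimes_{\Sigma_{c_P}} V^{\otimes c_P}\longrightarrow \cQ(f\circ c_P,f(d))\otimes_{\Sigma_{c_P}} V^{\otimes c_P}.
\end{equation*}
Specialising the strong-equivalence hypothesis to the factorisation $c_P\colon I\twoheadrightarrow I/\!\sim_{c_P}\to[\cP]$ (quotient by the equal-color equivalence), which realizes $G=\Sigma_{c_P}$, supplies the corresponding quasi-isomorphism after $\otimes_{\Sigma_{c_P}}k$. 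Upgrading from coefficients $k$ to coefficients $V^{\otimes c_P}$ is then the delicate step; it uses admissibility of both $\cP$ and $\cQ$ through the criterion of Theorem~\ref{thm:modelham}. For (ii) I filter $A'$ by word length in the generators lying in $V\setminus V_0$: each associated graded is of free-algebra type on a cofibrant collection built from the underlying of $A$ together with $V/V_0$, so (i) and the induction hypothesis on $A$ combine to propagate the conclusion. Structurally the argument mirrors the colorless one of~\cite{haha}.

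The main obstacle is precisely the coefficient extension in step (i): strong equivalence directly supplies a quasi-isomorphism only after $\otimes_{\Sigma_{c_P}}k$, whereas one needs the same with coefficients in $V^{\otimes c_P}$, which is generally not a cofibrant $\Sigma_{c_P}$-module. Admissibility, in the form of Theorem~\ref{thm:modelham}, functions here as a homotopical $\Sigma$-splitness statement making this extension possible; I would expect the bulk of the technical work to be concentrated at this point.
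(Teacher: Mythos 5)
Your overall plan --- verify that $f^*$ detects weak equivalences and that the unit on cofibrant objects is a weak equivalence (using that every object is fibrant), then run a cell induction --- is the right skeleton, and part (A) via lifting isomorphisms in $H^0(\cQ_1)$ to chain-level cycles is fine. The paper itself gives no argument here, deferring to the colorless case of \cite{virtual} and to Theorem~4.1 of \cite{BM}, so there is nothing detailed in the paper to compare against.

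The gap is in step (i), and it sits exactly where you sense the difficulty, but your proposed fix is not the right one. You invoke the strong-equivalence hypothesis only for the single subgroup $G=\Sigma_{c_P}$ (coming from the quotient by the equal-color equivalence) and then propose to upgrade from coefficients $k$ to coefficients $V^{\otimes c_P}$ via admissibility and Theorem~\ref{thm:modelham}. That criterion is an existence statement for the transferred model structure; it is not a homotopical $\Sigma$-freeness statement and gives no handle on $\cP(c,d)\otimes_{\Sigma_c}V^{\otimes c}\to\cQ(fc,fd)\otimes_{\Sigma_c}V^{\otimes c}$. What actually makes the coefficient extension work is the \emph{full} range of subgroups $G$ allowed in condition (a$^\prime$), not just $\Sigma_c$: after reducing to $V$ a collection of free modules (degree by degree, then via filtered colimits, which preserve quasi-isomorphisms in $C(k)$), $V^{\otimes c}$ decomposes as a direct sum of permutation modules $\mathrm{Ind}_H^{\Sigma_c}(k)$ (up to Koszul sign twists), where $H$ is the stabilizer of a tensor monomial --- and these stabilizers are precisely the groups $\mathrm{Aut}(I/J)$ arising from surjections $p\colon I\to J$ factoring $c$. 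Since $\cP(c,d)\otimes_{\Sigma_c}\mathrm{Ind}_H^{\Sigma_c}(k)\cong\cP(c,d)\otimes_H k$, the strong-equivalence hypothesis supplies the quasi-isomorphism for each summand, and a direct sum of quasi-isomorphisms is a quasi-isomorphism. Admissibility plays no role in this extension; the quantification over all $p$ in the definition of strong equivalence is exactly what substitutes for the $\Sigma$-cofibrancy you do not have. A similar caution applies to step (ii): the word-length filtration on a pushout expresses the graded pieces through the enveloping operads $\cP_A$ and $\cQ_{f_!A}$ (cf.\ Proposition~\ref{prp:pullback}), and you must propagate a strong-equivalence-type statement to these, which is where the remaining technical weight of the colored argument lies and which your sketch does not address.
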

The colorless case is proven in \cite{virtual}. The proof of \cite{virtual}
directly generalizes to the colored case. Theorem 4.1 of \cite{BM} proves
that weak equivalence of admissible $\Sigma$-cofibrant operads gives rise to a Quillen equivalence. 
\qed

The above observations lead us to the following definition.

\begin{dfn}\label{dfn:hsound}
An operad $\cO$ is called homotopically sound if it is admissible and $\Sigma$-cofibrant.
\end{dfn}

One can define therefore homotopy $\cO$-algebras as algebras over
an operad which is a homotopically sound replacement of $\cO$.

\subsection{$\Sigma$-split operads}
\label{ss:sigmasplit}

In this subsection we present another class of admissible operads in $C(k)$.
 

\subsubsection{}

There is an
obvious forgetful functor $\cO\mapsto\cO^\sharp$
assigning to an operad its planar counterpart. The functor $\sharp$
admits a left adjoint functor which we denote $\cO\mapsto \cO^\Sigma$; 
if $\cO$ is a planar operad, the operad $\cO^\Sigma$ has the same colors; it is 
defined by the formula
\begin{equation}
\cO^\Sigma(c,d)=\bigoplus_{\theta:I\simeq\langle n\rangle}\cO((c,\theta),d),
\end{equation}
where $\langle n\rangle=\{1,\ldots,n\}$ is the standard (totally ordered)
$n$-element set, the direct sum is over all bijections $\theta$ and a pair $(c,\theta)$ describes a colored collection $c$ numbered by the totally ordered set $(I,\theta)$.

Let $\cO$ be an operad. Applying the adjoint pair of functors described above, we get a new operad which we will denote $\cO^\Sigma$. It has the same colors as $\cO$ and its complexes of operations are defined by the formulas 
 
\begin{equation}
\cO^\Sigma(c,d)=\bigoplus_{\theta:I\simeq\langle n\rangle} \cO(c,d),
\end{equation}
in the previously explained notation.
 
The composition is defined as follows. Let $f:I\to J$ be a map of sets and let
$c:I\to[\cO]$ and $d:J\to [\cO]$ be collections.
The map
\begin{equation}\label{eq:S-composition} 
 \cO^\Sigma(d,e)\otimes\bigotimes_j \cO^\Sigma(c_j, d(j))\rTo \cO^\Sigma(c,e)
\end{equation}
is defined as follows. Choice of a total order on $J$ together with
a choice of total orders on each fiber $f^{-1}(j)$ defines a
lexicographical total order on $I$: if two elements of $I$ belong to 
different fibers, we compare the fibers, and if they belong to the same 
fiber, we compare them inside the fiber.
With the described above choice of the orderings, the corresponding
component of the map (\ref{eq:S-composition}) is given by the
composition (\ref{eq:composition}) for $\cO$.

For example, if $\cO$ is the operad for commutative algebras, $\cO^\Sigma$ 
is the operad for associative algebras.

\subsubsection{}
One has a canonical map
$$ \pi:\cO^\Sigma\rTo\cO$$
summing up the components corresponding to different orderings
(this is just the the counit of the adjunction).

One defines $\Sigma$-splitting as a collection of splittings 
$t=t^{c,d}:\cO(c,d)\rTo \cO^\Sigma(c,d)$ 
of the canonical map $\pi$ described above, 
satisfying the properties (SPL), (INV), (COM) which will be specified
later on. We will usually omit the superscript $(c,d)$ from the notation.

A $\Sigma$-splitting $t$ is defined by a collection of its components
$t_\theta:\cO(c,d)\to \cO(c,d)$ numbered by different orderings of $I$.

The first two requirements for $\Sigma$-splitting are
\begin{itemize}
\item[(SPL)] The map $t$ splits $\pi$, that is $\sum_\theta t_\theta=\id$.
\item[(INV)] For any isomorphism $f:c'\to c$ (that is, a bijection
$f:I'\to I$ satisfying $c'=c\circ f$) the induced isomorphism
$f^*:\cO(c,d)\to \cO(c',d)$ commutes with $t$. The latter means
that 
$$f^*\circ t_\theta=t_{\theta f}\circ f^*.$$
\end{itemize}

The last requirement of $\Sigma$-splitness is a weak form
of compatibility of the splitting with the compositions.

Let $c:I\to [\cO],\ d:J\to[\cO],\ a:K\to[\cO], a':K'\to[\cO]$ be finite collections
in $\cO$. Let $f:I\to J$ be a map of 
finite sets and let $\phi:a\to a'$ be an isomorphism of collections
(that is, a bijection $\phi:K\to K'$ such that $a=a'\circ\phi$).

Gluing the above data, one gets collections  $c\sqcup a:I\sqcup K\to[\cO]$ and
$d\sqcup a':J\sqcup K'\to[\cO]$, as well as a map of finite sets
$f\sqcup\phi:I\sqcup K\to J\sqcup K'$.

The requirement (COM) describes a compatibility of the splitting
with the composition in $\cO$ 

\begin{equation}
\cO(d\sqcup a',e)\otimes\bigotimes_{j\in J}\cO(c_j,d(j))\rTo \cO(c\sqcup a,e)
\end{equation}
induced by the morphism $f\sqcup\phi$.

We are now able to formulate the third requirement of $\Sigma$-splittings.

(COM) The following diagram is commutative.

\begin{equation}
\begin{diagram}
\cO(d\sqcup a',e)\otimes\bigotimes_{j\in J}\cO(c_j,d(j))& \rTo & 
\cO(c\sqcup a,e) \\
\dTo^t & & \dTo^t \\
\bigoplus_{\eta: J\sqcup K'\simeq\langle |J|+|K|\rangle}
\cO(d\sqcup a',e)\otimes\bigotimes_{j\in J}\cO(c_j,d(j))& &
\bigoplus_{\theta:I\sqcup K\simeq\langle |I|+|K|\rangle}
 & \cO(c\sqcup a,e) \\
\dTo^q & & \dTo^q \\
\bigoplus_{k\in K'}
\cO(d\sqcup a',e)\otimes\bigotimes_{j\in J}\cO(c_j,d(j))& \rTo & 
\bigoplus_{k\in K}\cO(c\sqcup a,e)
\end{diagram}
\end{equation}
The upper vertical arrows in the diagram are defined by splitting of
$\cO(d\sqcup a',d)$ and of $\cO(c\sqcup a,e)$ respectively.
In order to define the lower vertical arrows we will introduce the
following notation. For each ordering $\eta$ of the set $J\sqcup K'$ 
we denote by $\min_{K'}(\eta)$ the smallest element of the subset
$K'$ of $J\sqcup K'$. In the same manner we define $\min_K(\theta)$.
Now the maps $q$ send each $\eta$-component (resp.,
$\theta$-component) to the corresponding $\min_{K'}(\eta)$-component
(resp., $\min_K(\theta)$-component).

\begin{rem}
There is another (stronger) version of $\Sigma$-splitness where
$q$ is replaced with a projection to the sum over orderings of $K'$ 
(resp., of $K$). It seems more satisfactory aesthetically; in this formulation
the condition (INV) is its special case for $I=J=\emptyset$.

This stronger version was used in the definition given in \cite{haha}
for the colorless case.
\end{rem}

\begin{exm}\label{exm:Q}
In the case $k\supset\Q$  the map
$$ t_\theta(m)=\frac{1}{n!}m$$
defines a $\Sigma$-splitting.
\end{exm}

\begin{exm}\label{exm:plsigma}
Let $\cP$ be a planar colored operad and let $\cO=\cP^\Sigma$. 
The canonical map of asymmetric operads $\cP\to \cO^\sharp$ defines
a map of operads $t:\cO\to \cO^\Sigma$ splitting the canonical map $\cO^\Sigma\to \cO$.
This map satisfies obviously the conditions (SPL), (INV), (COM).
\end{exm}

\subsection{Admissibility of $\Sigma$-split operads}
\label{ss:ha}
One has
\begin{thm}
\quad
\begin{itemize}
\item $\Sigma$-split operads in $C(k)$ are admissible.
\item If the components $\cO(c,d)$ of a $\Sigma$-split operad $\cO$
are cofibrant complexes, $\cO$ is homotopically sound.
\end{itemize}
\end{thm}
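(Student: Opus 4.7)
The plan is to verify the Hamiltonian admissibility criterion of Theorem~\ref{thm:modelham}; by the remark following it, it is enough to show that for any $\cO$-algebra $A$, any color $c_0\in[\cO]$, and any contractible cofibrant complex $M$ placed at color $c_0$ and zero elsewhere, the canonical map $A\to A\coprod\F_\cO(M)$ is a quasi-isomorphism. Fix such data and write $B=A\coprod\F_\cO(M)$. By the enveloping-operad description of coproducts of algebras, $B$ carries a filtration $F_0\subset F_1\subset\cdots$ by number of $M$-variables, with $F_0=A$, $B=\colim_n F_n$, and $e$-component of the layer
\begin{equation*}
(F_n/F_{n-1})_e\cong\bigoplus_{[c']}\cO(c'\sqcup c_n,e)\otimes_{\Sigma_{c'}\times\Sigma_n}\Bigl(\bigotimes_{i\in I}A_{c'(i)}\otimes M^{\otimes n}\Bigr),
\end{equation*}
summed over isomorphism classes of collections $c':I\to[\cO]$, where $c_n:\langle n\rangle\to\{c_0\}\subset[\cO]$ is the constant collection.

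The key use of the $\Sigma$-splitting is to analyze these coinvariants. Axioms (SPL) and (INV) say that $t$ is a $\Sigma_{c'}\times\Sigma_n$-equivariant splitting of the canonical map $\pi:\cO^\Sigma(c'\sqcup c_n,e)\to\cO(c'\sqcup c_n,e)$, so $\cO(c'\sqcup c_n,e)$ is a $(\Sigma_{c'}\times\Sigma_n)$-equivariant retract of $\cO^\Sigma(c'\sqcup c_n,e)=\bigoplus_\theta\cO(c'\sqcup c_n,e)$. Since $\Sigma_{c'}\times\Sigma_n$ acts freely on the set of orderings $\theta$ of $I\sqcup\langle n\rangle$, the latter is a free $\Sigma_{c'}\times\Sigma_n$-module, so applying the additive functor ``tensor with $\bigotimes_iA_{c'(i)}\otimes M^{\otimes n}$ and take coinvariants'' exhibits $(F_n/F_{n-1})_e$ as a direct summand of a direct sum of terms of the form $\cO(c'\sqcup c_n,e)\otimes(\bigotimes_iA_{c'(i)})\otimes M^{\otimes n}$, with \emph{no} group coinvariants. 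Since $M$ is contractible cofibrant, a chain contraction on $M$ induces one on $M^{\otimes n}$ for each $n\geq1$, so each such term, and hence each layer with $n\geq 1$, is null-homotopic. Therefore every inclusion $F_{n-1}\hookrightarrow F_n$ is a quasi-isomorphism; filtered colimits preserving quasi-isomorphisms in $C(k)$, the map $A=F_0\to\colim_n F_n=B$ is a quasi-isomorphism, establishing admissibility.

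For the second claim, assume additionally that each $\cO(c,d)$ is a cofibrant complex. Then $\cO^\Sigma(c,d)=\bigoplus_\theta\cO(c,d)$, being a free $\Sigma_c$-module on a finite sum of cofibrant complexes, is cofibrant as a $\Sigma_c$-complex. The equivariant splitting $t$ exhibits $\cO(c,d)$ as a retract of $\cO^\Sigma(c,d)$ in $\Sigma_c$-complexes, hence $\cO(c,d)$ is cofibrant as $\Sigma_c$-complex; thus $\cO$ is $\Sigma$-cofibrant and therefore homotopically sound. The subtlest ingredient is the layer formula together with the requirement that the $\Sigma$-splitting behave well under the enveloping-operad construction. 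This is exactly where axiom (COM) is needed: it governs the compatibility of the splitting $t$ with those operadic compositions that absorb ``pure-$A$'' sub-operations into the algebra structure of $A$ (and thereby reduce filtration degree), ensuring that the $\Sigma$-splitting on $\cO$ induces a compatible one on the enveloping operad $\cO_A$ and making the reduction above to group-free tensor products legitimate.
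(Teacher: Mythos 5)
Your reduction to the admissibility criterion of Theorem~\ref{thm:modelham} and your proof of the second bullet (the retract argument showing $\cO(c,d)$ is $\Sigma_c$-cofibrant) are correct and match the paper. The gap is in the admissibility argument, and it is exactly at the point you flag as ``subtlest.'' First, the layer formula you write down is not correct as stated: the filtration layer of $A\coprod\F_\cO(M)$ by number of $M$-variables is $(F_n/F_{n-1})_e\cong\cO_A(c_n,e)\otimes_{\Sigma_n}M^{\otimes n}$, where $\cO_A$ is the enveloping operad, which is a \emph{quotient} of the free expression $\bigoplus_{[c']}\cO(c'\sqcup c_n,e)\otimes_{\Sigma_{c'}}\bigl(\bigotimes_iA_{c'(i)}\bigr)$ by the relations coming from the algebra structure on $A$, not equal to it. Your ``free $\Sigma_{c'}\times\Sigma_n$-module'' retract argument is valid for the free expression, but it does not pass to the quotient: a quotient of a projective $\Sigma_n$-module is not projective in general, so one cannot conclude that $\cO_A(c_n,e)\otimes_{\Sigma_n}M^{\otimes n}$ is acyclic without a genuinely new input.

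You assert that axiom (COM) supplies that input by ``ensuring that the $\Sigma$-splitting on $\cO$ induces a compatible one on the enveloping operad $\cO_A$,'' but (COM) does not say this, and I believe it is false. (COM) only constrains the composite $q\circ t$, where $q$ remembers, for each ordering $\theta$, the position $\min_K(\theta)$ of the distinguished (here, $M$-indexed) letters; it says nothing about $t$ itself commuting with the operadic compositions that produce the relations defining $\cO_A$. This is exactly why the paper's argument does not attempt to descend the splitting: instead it builds, from $t$, an explicit chain homotopy $\F_\cO(h)=\sum_\theta S_\theta\circ t_\theta$ on $\F_\cO(A\oplus M)$ in which $S_\theta$ acts only on the single $M$-factor in position $\min_{\langle n\rangle}(\theta)$, and then uses (COM)---which governs precisely the behavior of that minimum under composition---to verify that $\F_\cO(h)$ preserves the ideal $\cJ$ defining the quotient $A\coprod\F_\cO(M)$. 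That is a strictly weaker descent statement (a homotopy descending, not a splitting), and it is the one (COM) was designed to prove. To repair your proof you would need to replace the projectivity claim by this direct construction of a homotopy on the layers, at which point the argument becomes essentially the one in the paper.
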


The second claim of the theorem immediately follows from the first one,
as $\cO(c,d)$ is a direct summand of $\cO^\Sigma(c,d)$ which is cofibrant $\Sigma_c$-complex, where $\Sigma_c$ denotes the group of automorphisms of $c:I\to[\cO]$.

The forgetful functor commutes with filtered colimits. Thus, it is 
sufficient to check that the map $A\to A\coprod \F(H_a)$ is a quasiisomorphism for
$H_a$ standard contractible complex $\Cone(\id_k)[d]$ concentrated at a color $a\in[\cO]$.
The proof of the theorem is given in \ref{sss:proof0}---\ref{sss:proof2} below.

\subsubsection{Extending homotopy to a free algebra}
\label{sss:proof0}
Let $V=\{V_d|d\in [\cO]\}$ be a collection of complexes, 
$\alpha:V\to V$ an endomorphism and $h$ a homotopy of $\alpha$ with $\id_V$, 
that is a degree $-1$ map satisfying the condition
$$ dh=\id_V-\alpha.$$

The endomorphism $\alpha$ induces an endomorphism 
$\F_\cO(\alpha):\F_\cO(V)\to \F_\cO(V)$; we will
present an explicit homotopy between $\id_{\F_\cO(V)}$ and $\F_\cO(\alpha)$ which we will
denote $\F_\cO(h)$. The homotopy $\F_\cO(h)$ will be based on a $\Sigma$-splitting of $\cO$. 
 
Recall that one has a morphism of operads $\pi:\cO^\Sigma\to \cO$ identical
on the colors, as well as a $\Sigma$-splitting $t:\cO(c,d)\to \cO^\Sigma(c,d)$.

We are now ready to define a homotopy $H$ on $\F_\cO(V)$.
Recall that $\F_\cO(V)_d$ is the direct limit of the functor $\Free(V)_d$ 
carrying a collection
$c:I\to [\cO]$
to
$$ \Free(V)_d(c)=\cO(c,d)\otimes\bigotimes_i V_{c(i)}.$$
We will define a degree $-1$ endomorphism $H$ of each separate
$\Free(V)_d(c)$ 
compatible with the isomorphisms $c\to c'$ of collections. It is given
by the composition
\begin{multline}
 \cO(c,d)\otimes\bigotimes_i V_{c(i)}\rTo^t
 \bigoplus_{\theta:I\simeq\langle n\rangle} 
\cO(c,d)\otimes\bigotimes_i V_{c(i)}\rTo^S\\
 \bigoplus_{\theta:I\simeq\langle n\rangle} 
\cO(c,d)\otimes\bigotimes_i V_{c(i)}\rTo^\pi
 \cO(c,d)\otimes\bigotimes_i V_{c(i)},
\end{multline}  
with the map $S$ being defined at the $\theta$-component as
$$S_\theta=\sum_i \id_{\cO(c,d)}\otimes\alpha^{i-1}\otimes h\otimes\id^{n-i}.$$

\subsubsection{} 
\label{sss:proof1}

In order to check that the morphism $A\to A\coprod \F_\cO(H_a)$ is a
quasiisomorphism for  $H_a=\Cone(\id_k)[d]$, one proceeds as
follows.

Let $A'=A\oplus H_a$. Then $A\coprod \F_\cO(H_a)$ can be described 
as the quotient of $\F_\cO(A')$ by the ideal generated by the kernel 
of the natural map $\F_\cO(A)\rTo A$.

Let $\alpha:A'\to A'$ be zero on $H_a$ and $\id_A$ on $A$.
Let $h:A'\to A'$ be the degree $-1$ map vanishing on $A$ such that
$dh=\id-\alpha$. Then $h$ defines a homotopy $\F_\cO(h)$ on 
$\F_\cO(A')$ extending $h$.

Let $\cI$ the the kernel of the natural projection $\F_\cO(A)\to A$ 
and let $\cJ$ be the ideal in $\F_\cO(A')$ generated by $\cI$. 
We check below that 
$H(\cJ)\subset \cJ$ and this induces a homotopy on the quotient 
$\F_\cO(A')/J=A\coprod \F_\cO(H_a)$.

\subsubsection{Action of $\F_\cO(h)$ on $\F_\cO(A')$}
\label{h-on-faprime}

Some relevant notation. For $c:I\to[\cO]$ and $n\geq 0$ we define
$c^{*n}:I\sqcup\langle n\rangle\to[\cO]$ by the formula
$$ c^{*n}(i)=c(i)\text{ for } i\in I; \
c^{*n}(k)=a\text{ for } k\in\langle n\rangle ; 
$$
The $e$-component of the free algebra $\F_\cO(A')$ with 
$A'=A\oplus H_a$ is the colimit
of the complexes
$$ \cO(c^{*n},e)\otimes \bigotimes_iA_{c(i)}\otimes H_a^{\otimes n}$$

The homotopy $\F_\cO(h)$ is defined by the components $S_\theta$
numbered by the total ordering $\theta$ of the set $I\sqcup\langle n\rangle$.
Since $h$ vanishes on $A$ and $\alpha$ is identity on $A$ and vanishes on 
$H_a$, the map 
$S_\theta$ has form
$$S_\theta=\id_{\cO(c^{*n},e)}\otimes\id_A\otimes\id^{\otimes k-1}\otimes h\otimes\id^{n-k}$$
where the homotopy $h$ is applied to the $k$-th component of $H_a$, with
$k:=\min_{\langle n\rangle}(\theta)$.

\subsubsection{End of the proof}
\label{sss:proof2}

We keep the notation of \ref{sss:proof1}.

The ideal 
$\cJ$ in $\F_\cO(A')$
generated by $\cI$, is spanned by the expressions
\begin{equation}\label{eq:j}
u\otimes\delta\otimes\bigotimes_{i\in I-\{0\}}b_i\otimes
\bigotimes_{k\in\langle n\rangle}x_k
\end{equation}
where $c:I\to[\cO],\ 0\in I,\ c(0)=c_0,\ \delta\in\cI_{c_0}$, 
$u\in \cO(c^{*n},e)$, $b_i\in A_{c(i)}$
and $x_k\in H_a$.

We will now explicitly calculate the image of (\ref{eq:j}) under 
the homotopy $\F_\cO(h)=\sum_\theta S_\theta\circ t_\theta$ to 
make sure it belongs to $\cJ$. 

Let 
$$t(u)=\sum_{\theta:I^{*n}\to\langle |I|+n\rangle}t_\theta(u).$$
We claim that $\F_\cO(h)$ carries   (\ref{eq:j}) to the sum
\begin{equation}\label{eq:fj}
\sum_\theta u_\theta\otimes\delta\otimes\bigotimes_{i\in I-\{0\}}b_i\otimes
\bigotimes_{k\in\langle n\rangle}x_{\theta,k},
\end{equation}
where 
\begin{equation}\label{x-theta-k}
x_{\theta,k}=\left\{
\begin{array}{ll} x_k, & k\ne\min_{\langle n\rangle}(\theta)\\
h(x_k), & k=\min_{\langle n\rangle}(\theta)
\end{array}
\right.
\end{equation}

It is sufficient to check the formula (\ref{eq:fj}) in case $\delta$ 
is a monomial in $\F_\cO(A)$:
\begin{equation}\label{eq:delta}
 \delta=m\otimes\bigotimes_{j\in J } a_j
\end{equation}
with $m\in \cO(d,c_0),\ d:J\to[\cO],\ a_j\in A_{d(j)}$.

Replace 
$\delta$ in (\ref{eq:j}) with the expression (\ref{eq:delta}).
We get a monomial
\begin{equation}\label{eq:j2}
z:=u\circ m\otimes \bigotimes_{j\in J } a_j
\otimes\bigotimes_{i\in I-\{0\}}b_i\otimes
\bigotimes_{k\in\langle n\rangle}x_k,
\end{equation} 
where $u\circ m$ denotes the composition of $u$ and $m$ belonging
to $\cO(c\circ d,e)$ where $c\circ d:I-\{0\}\sqcup J\to[\cO]$ is the restriction
of $c\sqcup d$, whose image under $\F_\cO(h)$ is given by the formula
\begin{equation}\label{eq:fj2}
\F_\cO(h)(z)=\sum_{\eta:I-\{0\}\sqcup J\simeq\langle |I|+|J|-1\rangle}
S_\eta\circ t_\eta.
\end{equation}

By the axiom (COM) of $\Sigma$-splitness applied to
the surjection $I-\{0\}\sqcup J\rTo I$ sending the elements of $J$ to $0$
and the elements of $I-\{0\}$ to themselves, we deduce that 
$\F_\cO(h)(z)$ 
is equal to (\ref{eq:fj}).

\subsection{Simplicial structure in characteristic zero}

All operads are $\Sigma$-split when $k\supset\Q$, so in this case the category
of algebras $\Alg_\cO(C(k))$ has a model structure described in Theorem~\ref{thm:modelham}.

Moreover, polynomial differential forms allow one to define a simplicial structure on the category
$\Alg_\cO(C(k))$ which is (partly) compatible with the model category structure.
We will present the definitions and formulate the theorem. The proof is identical to the
colorless case described in \cite{haha}, 4.8.

For $k\supset\Q$ and $n\geq 0$ one defines a dg commutative algebra $\Omega_n$ by the formula
$$\Omega_n=k[x_0,\ldots,x_n,dx_0,\ldots,dx_n]/(\sum x_i-1,\sum dx_i).$$
The assignment $n\mapsto\Omega_n$ defines a simplicial object in the category of commutative dg algebras over $k$. It is canonically extended to a contravariant functor
\begin{equation*}
\Omega:\sSet\rTo\Alg_\Com(C(k))
\end{equation*}
carrying colimits to limits.

For $A,B\in\Alg_\cO(C(k))$ the simplicial set $\Map(A,B)$ is defined by the formula
$$ \Map(A,B)_n=\Hom(A,\Omega_n\otimes B).$$

The compatibility of the simplicial structure on $\Alg_\cO(C(k))$ with the model category structure
is described in the following theorem.

\begin{thm}
Assume $k\supset\Q$ and
let $\cO$ be an operad in $C(k)$.
The category
$\Alg_\cO(C(k))$ of $\cO$-algebras with values in $C(k)$ has a structure
of model category with quasiisomorphisms as weak equivalences and 
componentwise surjective maps as fibrations. 
The category $\Alg_\cO(C(k))$ has a \lq\lq{}weak simplicial model category structure\rq\rq{} (see~\cite{H.L}, 1.4.2), that is a simplicial structure such that the
axioms (M7) and the half of the axiom (M6), see \cite{hirsch}, 9.1.6, are satisfied.
\begin{itemize}
\item[$(\frac{1}{2}M6)$] For every finite simplicial set $K$ and $A\in\Alg_\cO(C(k))$
the \lq\lq{}weak path object\rq\rq{} (see~\cite{H.L}, 1.4.1) $A^K$ exists and is defined by the formula 
$$ A^K=\Omega(K)\otimes A. 
\footnote{It has an $\cO$-algebra structure as the tensor product of a commutative algebra with an $\cO$-algebra.}
$$
\item[$(M7)$]For a cofibration $i:A\to B$ and a fibration $p:X\to Y$ in $\Alg_\cO(C(k))$ the map
of simplicial sets
\begin{equation}\label{}
\Map(B,X)\rTo\Map(A,X)\times_{\Map(A,Y)}\Map(B,Y)
\end{equation}
is a fibration which is trivial if either $i$ or $p$ is trivial.
\end{itemize}
\end{thm}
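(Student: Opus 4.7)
The plan is to establish the three claims separately: the model structure, the weak path object axiom $(\frac{1}{2}M6)$, and the lifting axiom $(M7)$.

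First, since $k\supset\Q$, Example~\ref{exm:Q} exhibits every operad $\cO$ in $C(k)$ as $\Sigma$-split via the formula $t_\theta(m)=\tfrac{1}{n!}m$. Admissibility of $\Sigma$-split operads (Subsection~\ref{ss:ha}) then immediately yields the desired model category structure, with componentwise quasiisomorphisms as weak equivalences and componentwise surjections as fibrations.

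Next, for $(\frac{1}{2}M6)$, I would first observe that the commutative dg algebra $\Omega(K)$ acts by scalar extension on any $\cO$-algebra $A$, so $\Omega(K)\otimes A$ inherits a natural $\cO$-algebra structure, and the functoriality $A^{(-)}$ carries colimits of simplicial sets to limits of $\cO$-algebras because $\Omega$ does so on the level of commutative dg algebras. To verify the adjunction
$$\Hom_{\Alg_\cO}(B,\Omega(K)\otimes A)\cong\Hom_{\sSet}(K,\Map(B,A)),$$
I would write the finite simplicial set $K$ as a finite colimit of standard simplices; both sides convert this into a limit, and the equality then reduces to the tautological case $K=\Delta^n$, which recovers the definition $\Map(B,A)_n=\Hom(B,\Omega_n\otimes A)$.

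For $(M7)$, I would pass through the cotensor adjunction. For any monomorphism $j:K_0\hookrightarrow K$ of finite simplicial sets, the condition that the cofibration $i:A\to B$ has the RLP needed to make $\Map(B,X)\to\Map(A,X)\times_{\Map(A,Y)}\Map(B,Y)$ a (trivial) Kan fibration when tested against $j$ is equivalent to $i$ having the LLP against the $\cO$-algebra map
$$\varphi_j:\Omega(K)\otimes X\rTo\bigl(\Omega(K_0)\otimes X\bigr)\times_{\Omega(K_0)\otimes Y}\bigl(\Omega(K)\otimes Y\bigr).$$
Thus $(M7)$ reduces to showing in $\Alg_\cO(C(k))$: (a) $\varphi_j$ is always a fibration; (b) $\varphi_j$ is trivial when $j$ is a weak equivalence of simplicial sets; (c) $\varphi_j$ is trivial when $p$ is a trivial fibration. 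Claim (a) is a color-wise diagram chase: surjectivity of $\Omega(K)\to\Omega(K_0)$ (from $j$ being mono) gives a first preimage, and the error lives in $Y\otimes\ker(\Omega(K)\to\Omega(K_0))$ which can be corrected using surjectivity of $p$. For (b) and (c) I apply two-out-of-three to the factorization $X\otimes\Omega(K)\rTo^{\varphi_j} P\rTo Y\otimes\Omega(K)$, where $P$ is the target of $\varphi_j$ and the composite is $p\otimes\id_{\Omega(K)}$: in (b) the composite is a quasiisomorphism because $\Omega(K)\to\Omega(K_0)$ is, while in (c) both the composite and the right-hand map (a pullback of the trivial fibration $p\otimes\id_{\Omega(K_0)}$) are quasiisomorphisms, using flatness of $\Omega(K)$ over $k\supset\Q$.

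The main obstacle is the input used in (b): the map $\Omega(K)\to\Omega(K_0)$ must be a quasiisomorphism whenever $j$ is anodyne. This is the content of the Sullivan-de Rham theorem that polynomial differential forms compute the rational cohomology of a simplicial set; every other step is formal and parallels the colorless argument in \cite{haha}, 4.8.
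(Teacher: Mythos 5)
The overall strategy is the right one and matches the route the paper implicitly takes (the paper itself does not spell out an argument; it states that the proof is identical to the colorless case of \cite{haha}, 4.8). Your derivation of the model structure from $\Sigma$-splitness, the cotensor adjunction via $K=\Delta^n$ and finite colimits, the reduction of $(M7)$ to properties of $\varphi_j$, and claims (a) and (c) are all sound.

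However, there is a genuine gap in your verification of (b). You factor $\varphi_j$ through the \emph{second} projection of the pullback,
$$
X\otimes\Omega(K)\rTo^{\varphi_j} P\rTo Y\otimes\Omega(K),
$$
correctly identify the composite as $p\otimes\id_{\Omega(K)}$, and then assert that in case (b) this composite is a quasiisomorphism ``because $\Omega(K)\to\Omega(K_0)$ is.'' That inference is a non sequitur: $p\otimes\id_{\Omega(K)}$ is a quasiisomorphism precisely when $p$ is, which is not one of the hypotheses of case (b). Moreover, even granting the claim about the composite, two-out-of-three would still require knowing that $P\to Y\otimes\Omega(K)$ is a quasiisomorphism; this second map is a base change of $\id\otimes p:\Omega(K_0)\otimes X\to\Omega(K_0)\otimes Y$, which in case (b) is merely a fibration, not a trivial one. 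So with this factorization the argument for (b) does not close.

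The repair is to use the \emph{other} projection. Factor instead as
$$
\Omega(K)\otimes X\rTo^{\varphi_j} P\rTo \Omega(K_0)\otimes X,
$$
whose composite is $j^*\otimes\id_X$. When $j$ is anodyne, $j^*:\Omega(K)\to\Omega(K_0)$ is a degree-wise surjective quasiisomorphism between bounded complexes of free $k$-modules, so its kernel is a bounded acyclic complex of projectives, hence contractible; therefore $j^*\otimes\id_X$ is a quasiisomorphism with no flatness hypothesis on $X$. For the same reason $j^*\otimes\id_Y$ is a trivial fibration, and the right-hand map $P\to\Omega(K_0)\otimes X$ is its base change along $\id\otimes p$, hence again a trivial fibration. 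Two-out-of-three now yields that $\varphi_j$ is a quasiisomorphism, as desired. Your case (c) is fine as written since there $p$ is trivial and the factorization through $Y\otimes\Omega(K)$ is the appropriate one.
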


\subsection{A pushout of algebras}
Let $\cO$ be an admissible operad.
In this subsection we will present, for a later use, explicit formulas for a
specific type of cofibrations of operad algebras.
This is a generalization of formula presented in \cite{SS00} in the proof of Lemma 6.2,
see also F.~Muro's very detailed account of the planar case~\cite{Mu}, Lemmas 8.2, 8.5.

Recall that the model category structure on $\Alg_\cO(C(k))$ is transported from the
projective model structure on $C(k)$ via adjunction~(\ref{eq:model-adjunction}).

Given a cofibration $f:V\to W$ of cofibrant objects in $C(k)^{[\cO]}$, that is a collection
of cofibrations of cofibrant compexes $f_c:V_c\to W_c$ numbered by $c\in[\cO]$,
and a pushout diagram in $\Alg_\cO(C(k))$
\begin{equation}\label{eq:AtoB}
\begin{diagram}
\F_\cO(V) & \rTo^{\F_\cO(f)}& \F_\cO(W) \\
\dTo & & \dTo\\
A& \rTo & B
\end{diagram}
\end{equation} 
with cofibrant $A$, we will explicitly describe the morphism $\G(A)\rTo\G(B)$
as a colimit of a sequence of cofibrations 
\begin{equation}\label{eq:Bi}
\G(A)=B_0\rTo B_1\rTo\ldots\rTo B_k\rTo\ldots
\end{equation}
in $C(k)^{[\cO]}$.

\subsubsection{Enveloping operad}
In order to describe the precise formula for $B_i$, we need a colored version of enveloping operads, as defined and used by Berger and Moerdijk \cite{BM3}. 
Let $\cO$ be a colored operad and let $A$ be a $\cO$-algebra.
The enveloping operad $\cO_A$ can be defined as the operad governing $\cO$-algebras 
$X$ endowed with a morphism of $\cO$-algebras $A\rTo X$. Admissibility criterion
 \ref{thm:modelham} immediately implies that, if $\cO$ is
admissible, then for any $\cO$-algebra $A$ the operad $\cO_A$ is as well admissible.

Furthermore, if $\cO$ is $\Sigma$-cofibrant and $A$ is a cofibrant, the enveloping operad $\cO_A$ is also $\Sigma$-cofibrant. The latter is proven in \cite{BM0}, 5.4,
for colorless operads, but the colored version can be proven in the same way. 

\subsubsection{}
In the formula (\ref{eq:BB}) below we will use the following notation.
Given a collection of maps $\phi_i:X_i\to Y_i,\ i\in I$ in $C(k)$, we can form a functor from the standard $|I|$-cube, considered as a poset of subsets $S\subset I$,
to $C(k)$, carrying $S$ to $Z(S):=\bigotimes_{i\in I} Z_i$ where $Z_i=X_i$ for 
$i\not\in S$ and $Y_i$ for $i\in S$. This yields a map
\begin{equation}\label{eq:wedge}
\bigwedge_{i\in I}\phi_i :\colim_{S\ne I}Z(S)\rTo Z(I)=\otimes_{i\in I}
 Y_i.
\end{equation}
We will need some more notation. Recall that for $c:I\to[\cO]$ we denote $\Sigma_c$ the automorphism
group of $c$.  

Now, fix $d\in[\cO]$. The $d$-component of the map $B_{k-1}\rTo B_k$ is defined as the
 pushout of the map
\begin{equation}
\label{eq:BB}
\bigoplus_{\stackrel{c\in\pi_0(\Fin/[\cO])}{\scriptscriptstyle|I|=k}}
\cO_A(c,d)\otimes_{\Sigma_c}\bigwedge_{i\in I}f_{c(i)},
\end{equation}
where $f_{c(i)}:V_{c(i)}\to W_{c(i)}$ is a component of the map $f:V\to W$.

We have to specify the map from the source of (\ref{eq:BB}) to the $d$-component of $B_{k-1}$. Fixing $c:I\to[\cO]$ with $|I|=k$,   the formula (\ref{eq:BB})
is obtained from a cubic diagram whose vertices are numbered by subsets  $T\subset I$
so that $T$-th vertex has $|T|$ times the factor $W$ and $k-|T|$ times the factor
$V$. We have to specify a map from all such vertices corresponding to $T\ne I$, to $B_{k-1}$. This is done by replacing factors of $V_{c(i)}$ with respective factors of $A_{c(i)}=\cO_A(\emptyset,c(i))$, composing the components of $\cO_A$ via
$$
\cO_A(c,d)\otimes\bigotimes_{i\in I-T}\cO_A(\emptyset,c(i))\rTo \cO_A(c|_T,d),
$$
and, finally, composing the result with the map
$$
\cO_A(c|_T)\otimes\bigotimes_{i\in T}W_{c(i)}\rTo B_{k-1}.
$$

We now have
\begin{prp}\label{prp:pullback}
Let $\cO$-algebra $B$ be given by the pushout diagram~(\ref{eq:AtoB}). Then the 
object $\G(B)\in C(k)^{|\cO|}$ is a colimit of the sequence of cofibrations
(\ref{eq:Bi}) so that $B_{k-1}\to B_k$ is defined by the pushout diagram~(\ref{eq:BB})
in $C(k)^{|\cO|}$.
\end{prp}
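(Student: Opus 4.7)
The plan is to reduce everything to the enveloping operad $\cO_A$ and then build $\G(B)$ step by step out of the pushout squares (\ref{eq:BB}). Since $\cO$-algebras under $A$ are the same as $\cO_A$-algebras, with $A$ as the initial object, the pushout $B$ in $\Alg_\cO$ can be rewritten as the pushout in $\Alg_{\cO_A}$ of $\F_{\cO_A}(f):\F_{\cO_A}(V)\to\F_{\cO_A}(W)$ against the unique map $\F_{\cO_A}(V)\to A$ (adjoint to $V\to\G(A)$, itself the restriction to generators of $\F_\cO(V)\to A$). This standard trick replaces the awkward colimit of $\cO$-algebras over $A$ with a pushout of free algebras over $\cO_A$.

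Next I would define $B_k$ inductively by (\ref{eq:BB}) and verify that each transition $B_{k-1}\to B_k$ is a cofibration in $C(k)^{[\cO]}$. The pushout-product $\bigwedge_{i\in I}f_{c(i)}$ is a cofibration of cofibrant complexes equivariant for $\Sigma_c$, and tensoring over $\Sigma_c$ with the cofibrant $\Sigma_c$-complex $\cO_A(c,d)$ preserves this property; here the $\Sigma$-cofibrance of $\cO_A$ under cofibrance of $A$, proven for the colorless case in \cite{BM0}, 5.4, transfers to the colored setting. The map from the source of (\ref{eq:BB}) to $B_{k-1}$ specified by the displayed compositions in the text is then well-defined because each "missing" factor corresponds to collapsing a $W_{c(i)}$ generator to its image in $A_{c(i)}=\cO_A(\emptyset,c(i))$, which is already realized inside $B_{k-1}$.

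The core of the proof is to endow $B_\infty:=\colim_k B_k$ with an $\cO_A$-algebra structure and then exhibit an isomorphism $B_\infty\cong B$ in $\Alg_{\cO_A}$. For an operation $\mu\in\cO_A(c,d)$ applied to elements $b_1,\ldots,b_n$ of $B_\infty$, I factor each $b_i$ either through the image of $A$ (where $\cO_A$-composition absorbs it into a new operation with fewer inputs) or through $W$ (at some filtration level). After absorption, all remaining inputs lie in $W$, and the result of $\mu$ is then read off from the corresponding summand of (\ref{eq:BB}) in the appropriate $B_{k'}$. The cubic pushout-product diagram guarantees coherence: two paths of absorbing $W$-inputs that factor through $V\to A$ define the same element of $B_\infty$, because they are identified by the very coequalizer encoded by $\bigwedge_{i\in I}f_{c(i)}$. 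Associativity and unitality reduce to the corresponding properties of $\cO_A$-composition, checked filtration by filtration.

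Finally, the universal property of the pushout: given an $\cO_A$-algebra $X$ with maps from $A$ and from $\F_{\cO_A}(W)$ that agree on $\F_{\cO_A}(V)$, one constructs $B_\infty\to X$ level by level, using the $\cO_A$-action on $X$ to fill in the dashed arrow at each stage of (\ref{eq:BB}). Uniqueness is automatic since $B_\infty$ is generated under $\cO_A$-operations by the images of $A$ and $W$. The main obstacle is the combinatorial bookkeeping in verifying well-definedness of the $\cO_A$-action on $B_\infty$: tracking how $\cO_A$-composition reabsorbs $A$-valued inputs of $W$-type, and checking that overlaps across the $|I|$-cube match, is exactly the content of Muro's analysis \cite{Mu}, Lemmas 8.2 and 8.5 in the planar case, and our argument follows his, keeping the $\Sigma_c$-coinvariants throughout.
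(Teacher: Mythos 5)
Your proposal is correct, but it takes a genuinely different route from the paper's. You and the paper agree on the first move: pass to the enveloping operad $\cO_A$ and rewrite the pushout of $\cO$-algebras under $A$ as a pushout of free $\cO_A$-algebras. After that the strategies diverge.

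The paper exploits the fact that $B$ is a \emph{split} coequalizer of $\F_{\cO_A}(V\oplus W)\rightrightarrows\F_{\cO_A}(W)$. Because split coequalizers are absolute colimits, the forgetful functor $\G$ may be applied \emph{before} taking the colimit, so the $\cO_A$-algebra structure on the quotient never has to be constructed or re-verified by hand. The filtration $B_k$ is then simply defined as the image of the arity filtration $\F^{\leq k}_{\cO_A}(W)\to B$ under $\G$, and the only remaining work is the fiber-product identification $\F^k(W)\times_{B_k}B_{k-1}\cong$ (source of~(\ref{eq:BB})). By contrast, your proposal defines $B_k$ by the pushouts~(\ref{eq:BB}) themselves, assembles $B_\infty=\colim B_k$, then endows $B_\infty$ with an $\cO_A$-action element by element (absorbing $A$-valued inputs into shorter operations, reading off $W$-type inputs from cube faces) and checks the universal property. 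This is exactly the Schwede--Shipley/Muro style argument, which the paper cites as a reference rather than reproduces. The cost of your route is the coherence bookkeeping you acknowledge; the benefit is that the $\cO_A$-structure on the filtered colimit is made entirely explicit, which some readers may find more transparent than the absolute-colimit argument. Both proofs are sound; yours is just longer where the paper is slick. One small caveat worth flagging: the $\Sigma$-cofibrance of $\cO_A$ that you invoke (and that is needed to conclude the transitions are cofibrations, not merely that $\G(B)$ is a colimit of the stated pushouts) requires $\cO$ to be homotopically sound, not just admissible — a hypothesis the paper formally adds only in Corollary~\ref{crl:excellent} rather than in the proposition itself.
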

\begin{proof}
The $\cO$-algebra $B$ can be presented as a split coequalizer of
$$
A\sqcup \F_\cO(V\oplus W)\stackrel{\rTo^\alpha}{\rTo_\beta}A\sqcup \F_\cO(W), 
$$
where $\alpha$ is determined by the map $(f,\id_W):V\oplus W\to W$, and $\beta$ is $\id_W$ on $W$ and is defined by $V\to A$ on $V$. 
The above coequalizer can be more conveniently rewritten as the coequalizer of
$$
\F_{\cO_A}(V\oplus W)\stackrel{\rTo^\alpha}{\rTo_\beta}\F_{\cO_A}(W). 
$$
We will apply the functor $\G$ before calculating the colimit. The free algebras
considered as collection of complexes, are direct sums of components $\F^k$ corresponding to collections of colors $c:I\to[\cO]$ with $|I|=k$. We define $B_k$ as the image of the map $\F^{\leq k}_{\cO_A}(W)\rTo B$. Then it is obvious that the image of $\F^k_{\cO_A}(W)$ and $B_{k-1}$ generate the whole $B_k$ and that the $d$-component of the fiber product $\F^k(W)\times_{B_k}B_{k-1}$ coincides with the source of the map~(\ref{eq:BB}).
\end{proof}

\begin{crl}\label{crl:excellent}
Assume $\cO$ is homotopically sound.
Let $A$ be a cofibrant $\cO$-algebra. Then for each $d\in[\cO]$ the complex
$\G(A)_d$ is cofibrant.
\end{crl}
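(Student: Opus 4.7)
The plan is a cellular induction on a cofibrant presentation of $A$, controlling the forgetful functor $\G$ through the explicit filtration of Proposition~\ref{prp:pullback}. Cofibrations in $C(k)^{[\cO]}$ are stable under pushout, transfinite composition, and retracts, and every cofibrant $\cO$-algebra is a retract of a transfinite composition of pushouts along maps $\F_\cO(f)$, with $f:V\to W$ a generating cofibration in $C(k)^{[\cO]}$, starting from the initial algebra $\F_\cO(0)$. It therefore suffices to check (a) that the initial $\cO$-algebra has cofibrant components, and (b) that when $A\to B$ is an elementary pushout of this type with $A$ cofibrant and each $\G(A)_d$ cofibrant, the map $\G(A)_d\to\G(B)_d$ is a cofibration between cofibrant complexes for every $d\in[\cO]$.

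Part (a) is immediate: $\G(\F_\cO(0))_d=\cO(\emptyset,d)$, whose automorphism group $\Sigma_\emptyset$ is trivial, so the complex $\cO(\emptyset,d)$ is cofibrant by the $\Sigma$-cofibrancy of $\cO$. For (b), Proposition~\ref{prp:pullback} presents $\G(B)_d$ as the colimit of a tower $B_0=\G(A)_d\to B_1\to\cdots$ whose $k$-th step is a pushout of the map~(\ref{eq:BB}), so it is enough to show that~(\ref{eq:BB}) is a cofibration between cofibrant complexes in $C(k)$ for each $d$ and $k$.

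The map~(\ref{eq:BB}) has the form $\cO_A(c,d)\otimes_{\Sigma_c}\bigwedge_{i\in I}f_{c(i)}$. On the one hand, since $\cO$ is homotopically sound (in particular $\Sigma$-cofibrant) and $A$ is cofibrant, the enveloping operad $\cO_A$ is itself $\Sigma$-cofibrant by the colored analogue of~\cite{BM0}, 5.4, already cited in the paper; hence $\cO_A(c,d)$ is a projectively cofibrant $\Sigma_c$-complex. On the other hand, the pushout-product $\bigwedge_{i\in I}f_{c(i)}$ is a cofibration between cofibrant complexes in $C(k)$, naturally equipped with a $\Sigma_c$-equivariant structure coming from the permutation of the indexed tensor factors.

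The conclusion then reduces to the following general fact: if $G$ is a finite group, $M$ is a projectively cofibrant $kG$-complex, and $\phi:X\to Y$ is a $G$-equivariant map which is a cofibration between cofibrant complexes in $C(k)$, then $M\otimes_G\phi$ is again a cofibration between cofibrant complexes in $C(k)$. This is proved by writing $M$ as a retract of a transfinite cell complex with generating cells $k[G]\otimes S^n\hookrightarrow k[G]\otimes D^n$ and observing that $(k[G]\otimes P)\otimes_G\phi=P\otimes\phi$, to which the pushout-product axiom in $C(k)$ applies. The principal obstacle of the proof lies exactly here: the hypothesis of $\Sigma$-cofibrancy is what forces the $\Sigma_c$-coinvariant quotient to behave homotopically, and once the $\Sigma$-cofibrancy of $\cO_A$ is in hand, the remainder is a routine cellular induction.
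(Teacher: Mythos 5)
Your proof is correct and follows the same approach as the paper: a cellular induction via Proposition~\ref{prp:pullback}, reducing to showing that the maps~(\ref{eq:BB}) are cofibrations. You supply more detail than the paper's terse proof --- in particular the explicit lemma that $M\otimes_G\phi$ is a cofibration when $M$ is a cofibrant $kG$-complex and $\phi$ is an equivariant cofibration of cofibrant complexes, and the base case $\G(\F_\cO(0))_d=\cO(\emptyset,d)$ --- both of which the paper leaves implicit.
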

\begin{proof}
Any cofibrant $\cO$-algebra is a retract of a transfinite sequence of cofibrations
as in (\ref{eq:AtoB}). Formulas (\ref{eq:BB}) show that each step is a cofibration 
in $C(k)^{[\cO]}$, so the limit is as well a cofibration. Finally, retract of a cofibration is a cofibration.
\end{proof}

\section{SM $\infty$-categories}
\label{s:SM}
 
In this section we will construct certain SM $\infty$-categories
(of $\infty$-categories, of dg categories and others)
by Dwyer-Kan localization. 
We construct an adjoint pair of functors
$$ \fC_\dg:\Cat_\infty^\times\rlarrows N(\dg\Cat)^\otimes:\fN_\dg$$
between the symmetric monoidal $\infty$-categories of infinity-categories and of dg categories, induced by Dold-Kan equivalence. The functor $\fC_\dg$ is symmetric monoidal, whereas $\fN_\dg$ is lax symmetric monoidal (that is, a morphism of $\infty$-operads).

We will denote $\Cat^\SM_\infty:=\Alg_\Com(\Cat_\infty)$
and  $\dg\Cat^\SM=\Alg_\Com(N(\dg\Cat))$.

The functor $\fN_\dg$ induces  a functor
$$ \fN_\dg:\dg\Cat^\SM \rTo  \Cat^\SM_\infty.$$

Furthermore, we will see that both $\infty$-categories $\dg\Cat^\SM(k)$ and $\Cat^\SM_\infty$
 are enriched over $\Cat_\infty$, so that $\fN_\dg$ preserves this enrichment. The latter
 means that for a pair of symmetric monoidal dg categories $C,\ D$ one has a functor
$$ \Fun^\otimes(C,D)\rTo\Fun^\otimes(\fN_\dg(C),\fN_\dg(D)).$$ 

\subsection{Localization}

Given a symmetric monoidal $\infty$-category $\cC^\otimes$ and a collection of arrows $W$, we would like to be able to define a SM structure on the localization $\cL(\cC,W)$. This is easy if
the tensor product preserves $W$, see \cite{H.L}, 3.2 or \cite{L.HA}, 4.1.3.4.

In this case the localization of the 
total category $\cC^\otimes$ with respect to the collection of arrows in $\cC^\otimes$ generated by $W$, yields what we call a strict SM localization: this is a SM functor
\begin{equation}
\cC^\otimes\rTo\cL(\cC^\otimes,W^\otimes)
\end{equation}
universal among SM functors $\cC^\otimes\to\cD^\otimes$ carrying $W$ to equivalences.
Moreover, the underlying $\infty$-category of the strict SM localization is the localization 
$\cL(\cC,W)$ and the localization functor is also universal among lax monoidal functors
$\cC^\otimes\to\cD^\otimes$, see \cite{H.L}, 3.2.

Strict SM localizations seldom exist: tensor product does not always preserve weak equivalences.
In this paper we will use the following ad hoc construction. Given a SM $\infty$-category $\cC^\otimes$ and a collection of arrows $W$ in $\cC$, we will present a full subcategory
$\cC_0^\otimes$ such that
\begin{itemize}
\item The pair $(\cC^\otimes_0,W_0=W\cap\cC_0)$ admits a strict SM localization.
\item The embedding $\cC_0\rTo\cC$ induces an equivalence of the localizations
$ \cL(\cC_0,W_0)\to\cL(\cC,W)$.
\end{itemize} 

We will call $\cL(\cC_0^\otimes,W_0^\otimes)$ the SM localization of $(\cC^\otimes,W)$. This
construction depends, in general, on the choice of $\cC_0$. We believe that in the examples
below, it satisfies a universal property which makes it {\sl right SM localization}
as defined in \cite{H.L}, 3.3. By~\cite{H.L}, 3.3.3, this is so in Example \ref{sss:QC}.

\subsubsection{Example: $\QC(k)^\otimes$} 
\label{sss:QC}

Let $k$ be a commutative ring, $\cC:=C(k)$ the category of complexes of $k$-modules
and let $W$ be the collection of quasiisomorphisms. We choose $\cC_0$ to be 
the full subcategory of cofibrant complexes. As a result, we get a SM $\infty$-category denoted as $\QC(k)$. This is the SM $\infty$-category of $k$-modules; its homotopy category is the derived category of $k$.
 
\subsubsection{Example: categories of enriched categories} 
\label{sss:CatQC}

We will use a similar construction to define SM $\infty$-categories 
of certain enriched categories. The corresponding model categories
were defined by G.~Tabuada, see~\cite{T1,T2}. These are
\begin{itemize}
\item $\dg\Cat(k)$, the category of categories enriched over $C(k)$.
\item $\dg^{\leq 0}\Cat(k)$, that of categories enriched over 
$C^{\leq 0}(k)$.
\item $\sMod\mhyphen\Cat(k)$, that of categories enriched over the simplicial $k$-modules.
\end{itemize}

Symmetric monoidal structure on all these categories is  induced by the symmetric monoidal structure on $C(k),\ C^{\leq 0}(k)$ and $\sMod(k)$ respectively.  In each one of the cases
$\cC:=\dg\Cat,\ \dg^{\leq 0}\Cat$ or $\sMod\mhyphen\Cat$, the full subcategory
$\cC_0$ is spanned by the categories whose $\Hom$-objects are cofibrant. 

In all three cases tensor product preserves weak equivalence of categories belonging to $\cC_0$.
It remains to check that in all three cases the embedding
$\cC_0\rTo\cC$ induces an equivalence of DK localizations.
This is routinely done using Key Lemma 1.3.6 of \cite{H.L}\footnote{see 1.3.7 of
\cite{H.L} for the routine.}.

This yields symmetric monoidal $\infty$-categories which we denote 
$N(\dg\Cat)^\otimes$, $N(\dg^{\leq 0}\Cat)^\otimes$ and 
$N(\sMod\mhyphen\Cat)^\otimes$.

\subsection{Dold-Kan correspondence}

\subsubsection{Classical Dold-Kan equivalence}

Here we will fix some notation. The functor of normalized chains
$$ C_*:\sMod(k)\rTo C^{\leq 0}(k)$$
from simplicial $k$-modules to nonpositively graded complexes
of $k$-modules
is well-known to be an equivalence, with the inverse functor
$$ N_*: C^{\leq 0}(k)\rTo \sMod(k)$$
defined by the formula $N_n(X)=\Hom(C_*(\Delta^n),X)$.

The functor $C_*$ is not symmetric monoidal, but it is very close to be one. One has functorial maps

\begin{equation}\label{eq:EM}
\EM_{X,Y}:C_*(X)\otimes C_*(Y)\rTo C_*(X\otimes Y)
\end{equation}
(Eilenberg-MacLane, or shuffle, map),
and
\begin{equation}\label{eq:AW}
\AW_{X,Y}:C_*(X\otimes Y)\rTo C_*(X)\otimes C_*(Y)
\end{equation}
(Alexander-Whitney map) such that
\begin{itemize}
\item The functor $C_*$ is lax symmetric monoidal via $\EM$.
\item It is also colax monoidal 
\footnote{but not colax symmetric monoidal!} via $\AW$
(equivalently, $N_*$ is lax monoidal via $\AW$).
\item Both $\AW$ and $\EM$ are homotopy equivalences and 
$\AW\circ\EM=\id$.
\end{itemize}

\subsubsection{Enriched categories}
Any lax monoidal functor $F:M\rTo N$ induces a functor
$$F:\Cat_M\rTo\Cat_N$$
between the respective enriched categories.

Therefore, one has a pair of functors
\begin{equation}\label{eq:barCN}
\bar C:\sMod\mhyphen\Cat(k)\rlarrows\dg^{\leq 0}\Cat(k):\ \bar N,
\end{equation}
where $\bar C=(C_*,\EM)$ and $\bar N=(N_*,\AW)$,
with a natural isomorphism $\bar N\circ\bar C=\id$.
Note that the  functors $\bar C,\bar N$ do not form an adjoint pair.

The functor $\bar C$ is lax symmetric monoidal. Developing the ideas
of \cite{SS}, Tabuada proved in \cite{T2} that the functor $\bar C$
has a left adjoint and this pair defines a Quillen equivalence.

Therefore, an equivalence 
\begin{equation}
N(\bar C):N(\sMod\mhyphen\Cat(k))\rTo N(\dg^{\leq 0}\Cat(k))
\end{equation}
is induced. Since it is lax SM, it is a symmetric monoidal equivalence.
Since $N(\bar N)$ is left inverse, it is an inverse symmetric monoidal
equivalence.
 
\subsubsection{}

We can now define an adjoint pair 
\begin{equation}
\fC_\dg:\Cat^\times_\infty\rlarrows N(\dg\Cat(k))^\otimes\ :\fN_\dg
\end{equation}
as a composition
\begin{equation}
N(\sCat^\times)\rlarrows N(\sMod\mhyphen\Cat(k))^\otimes \rlarrows
N(\dg^{\leq 0}\Cat(k))^\otimes \rlarrows N(\dg\Cat(k))^\otimes.
\end{equation}
The adjoint pair in the middle is a SM equivalence. In two other adjoint pairs the left adjoint is symmetric monoidal, therefore the right adjoint is a map of operads, see Appendix~\ref{sec:SMA}.

Thus, $\fC_\dg$ is symmetric monoidal and $\fN_\dg$ is a map of operads. 

\subsubsection{}

We will now show that the functor $\fN_\dg$  carries
the dg category $C(k)$ to $\QC(k)\in\Cat_\infty$. Moreover,
$\fN_\dg$ carries the commutative algebra object $C(k)^\otimes$ to $\QC(k)^\otimes$.

Let $C^c_\sharp(k)$ denote the category of cofibrant complexes of $k$-modules.
It is $k$-linear and so is enriched in a \lq\lq{}trivial\rq\rq{} way over $C(k)$:
\begin{equation}
\chom_\sharp(X,Y):=Z^0(\chom(X,Y)).
\end{equation}

The functor $\fN_\dg$ carries $C^c_\sharp(k)$ into 
the category whose Hom object are discrete (as simplicial objects)
$k$-modules. We identify $\fN_\dg(C^c_\sharp(k))$ with $C^c_\sharp(k)$
for the obvious reason.

The functor $\fN_\dg$ applied to the map $C^c_\sharp(k)\to C^c(k)$
yields a map 
$$C^c_\sharp(k)\rTo \fN_\dg(C^c(k))$$ 
which carries quasiisomorphisms to equivalences. Therefore, a map
$$\QC(k)\rTo \fN_\dg(C^c(k))$$ 
is induced. It is an equivalence by  \cite{H.L}, 1.4.3.

Let us show that $\fN_\dg$ also preserves the symmetric monoidal structure of $C^c(k)$.

The adjoint pair

$$ \fC_\dg:\Cat_\infty^\times \rlarrows N(\dg\Cat(k))^\otimes:\fN_\dg$$
gives rise to an adjoint pair of functors between the 
$\infty$-categories of commutative algebras in respective categories,
\begin{equation}
\label{eq:Cdg-sm}
\fC_\dg:\Cat^\SM_\infty \rlarrows \dg\Cat^\SM(k):\fN_\dg,
\end{equation}
that is between symmetric monoidal $\infty$-categories and (weak) symmetric monoidal dg categories.

We claim that $\fN_\dg$ carries the symmetric monoidal dg category $C^c(k)^\otimes$ to
$\QC(k)^\otimes$ as constructed in \ref{sss:QC}. 

The dg category $C^c_\sharp(k)$ has a symmetric monoidal structure
and the map $C^c_\sharp(k)\rTo C^c(k)$ is a symmetric monoidal functor.
Therefore, the induced arrow
$$ C^c_\sharp(k)^\otimes\rTo \fN_\dg(C^c(k)^\otimes)$$
is also a symmetric monoidal functor. By universality of symmetric monoidal localization we get a symmetric monoidal functor
\begin{equation}
\QC(k)^\otimes\rTo\fN_\dg(C^c(k)^\otimes).
\end{equation}
Since we already know that the induced functor $\QC(k)\to\fN_\dg(C^c(k))$ is an equivalence, it is an equivalence 
of symmetric monoidal $\infty$-categories.

\subsection{$\fN_\dg$, enriched}

We will now show that the $\infty$-categories $\Cat_\infty^\SM$ and $\dg\Cat^\SM(k)$ are enriched over $\Cat_\infty$ and the functor $\fN_\dg$ defined in (\ref{eq:Cdg-sm}) preserves this enrichment.  More precisely, we will present, for a pair $A,B$ of symmetric monoidal
dg categories,  
a map of $\infty$-categories
\begin{equation}  
 \Fun^\otimes(A,B)\rTo\Fun^\otimes(\fN_\dg(A),\fN_\dg(B))
\end{equation}
of respective symmetric monoidal functors extending the map of 
of spaces of morphisms defined by the functor $\fN_\dg$.

We will first explain the construction in the setup of conventional categories, and then
will provide the $\infty$-categorical generalization, using the formalism of SM adjunction
(see Appendix \ref{sec:SMA}).

\subsubsection{A general setup (conventional categories)}
\label{sss:conventional}
Let
\begin{equation}\label{eq:smadj}
\lambda:\cC\rlarrows\cD:\rho
\end{equation}
be an adjoint pair of functors
between symmetric monoidal categories, so that $\lambda$ is symmetric monoidal
(and therefore $\rho$ is lax symmetric monoidal). We assume that $\cD$ is cotensored over $\cC$, which means that there exists a functor $\eta:\cC^\op\times\cD\rTo\cD$, $(X,A)\mapsto A^X$, right
adjoint to the bifunctor $\cC\times\cD\rTo\cD$ carrying the pair $(X,A)$ to 
$\lambda(X)\otimes A$. One can easily see that $\eta$ is lax symmetric monoidal.
 
Assume now that $\cC$ is cartesian. Then any object $X\in\cC$ has an obvious coalgebra structure defined by the diagonal. This implies that for any commutative algebra $A$ in $\cD$ and any object $X$ in $\cC$ the power object $A^X$ has a commutative algebra structure.
The multiplication in $A^X$ is given by the composition
$$ A^X\otimes A^X\rTo (A\otimes A)^{X\times X}\rTo A^{X\times X}\rTo A^X.$$
We can therefore define inner hom 
 on $\Alg_\Com(\cD)$ by the formula
\begin{equation}\label{eq:funtensor}
\Hom(X,\Fun^\otimes(A,B))=\Hom_{\Alg_\Com(\cD)}(A,B^X),
\end{equation}
provided the right-hand side is representable.

The $\cC$-enrichment on $\Alg_\Com(\cD)$ so defined is functorial in the following sense.
Given a sequence of adjoint pairs
$$ \cC\pile{\rTo^{\lambda_1}\\ \lTo_{\rho_1}}\cD_1\pile{\rTo^{\lambda_2}\\ \lTo_{\rho_2}}\cD_2$$
between SM categories, satisfying the above properties, one has a natural isomorphism
\begin{equation}
\rho_2(B^X)=\rho_2(B)^X,
\end{equation}
which induces a canonical map
\begin{equation}
\Fun^\otimes(A,B)\rTo\Fun^\otimes(\rho_2(A),\rho_2(B)).
\end{equation}

\subsubsection{Construction for SM $\infty$-categories}

The only claim requiring a special attention when extending the above construction to $\infty$-categories is the structure of lax symmetric monoidal functor on $\eta:\cC^\op\times\cD\rTo\cD$
induced by the adjunction~(\ref{eq:smadj}). The functor $\lambda$ leads to a $\cC$-left-tensored structure on $\cD$ given by a SM functor
$$ \cC\times\cD\rTo \cD$$
defined by the formula $(a,x)\mapsto \lambda(a)\otimes x.$

The corresponding functor $\cC^\op\times\cD^\op\times\cD\rTo\cS$
carrying the triple $(a,x,y)$ to $\Map(a\otimes x,y)$ is then lax monoidal by~\ref{crl:laxadjunction}. 
Existence of $\cC$-cotensor structure on $\cD$ is equivalent
to $\{1,3\}$-representability of this functor. Once more, according to~\ref{crl:laxadjunction},
this implies that the functor
\begin{equation}\label{eq:eta}
\eta:\cC^\op\times\cD\rTo \cD
\end{equation}
is lax SM, see~\ref{exm:2}.

Now, a lax SM functor induces a functor between respective $\infty$-categories of commutative algebras. A commutative algebra in $\cC^\op\times\cD$ is a pair $(X,A)$ where $X\in\cC$ and $A\in\Alg_\Com(\cD)$. This yields a required functor
 \begin{equation}\label{eq:eta-SM}
\eta^\SM:\cC^\op\times\Alg_\Com(\cD)\rTo\Alg_\Com(\cD),
\end{equation}
which allows one to define $\cC$-valued inner Hom on $\Alg_\Com(\cD)$
by the formula (\ref{eq:funtensor}).

\

\subsubsection{}
We wish to apply the above construction to $\cC=\cD_1=\Cat_\infty$, $\cD_2=N(\dg\Cat(k))$.
The $\infty$-categories $\Cat_\infty$ and $N(\dg\Cat(k))$ can be described as $\infty$-categories
underlying combinatorial model categories, see \cite{L.T}, 2.2.5.1 and \cite{T1},
Thm 1.8.

Therefore, the corresponding underlying $\infty$-categories are presentable.
The tensor products in these $\infty$-categories commute with colimits, so by Corollary 3.2.3.5 of \cite{L.HA} the categories
of commutative algebras in $\Cat_\infty$ and $N(\dg\Cat(k))$ are as well presentable. Furthermore, the functors $\eta$ and $\eta^\SM$ preserve limits in each of the arguments.

This implies presentability of the $\cC$-valued inner Hom given by (\ref{eq:eta-SM}).

\

We now have to make sure that the $\Cat_\infty$-enrichment of $\Cat^\SM_\infty$ defined by the above universal construction, coincides with the standard one, see \cite{L.HA}, 
Definition 2.1.3.7.

\subsubsection{Inner Hom for SM $\infty$-categories}

In case $\cC=\cD$ has products,  the functor $\eta^\SM$ defined in (\ref{eq:eta-SM})
can be described much easier: the functor $\eta$ (\ref{eq:eta}) preserves products in the second variable; therefore, it carries algebras to algebras. Here is an explicit description 
of $\eta^\SM$ for $\cC=\Cat_\infty$, where commutative algebras in $\Cat_\infty$ are 
presented as $\infty$-categories cocartesian over $N\Fin_*$, see \cite{L.HA}, Section 2.

Given a simplicial set $X$ and a SM $\infty$-category $p:B\rTo N\Fin_*$, we define
a simplicial set $B^X$ with a map $q:B^X\rTo N\Fin_*$ as follows. The $n$-simplices of $B^X$
over $\sigma:\Delta^n\rTo N\Fin_*$ are the commutative diagrams
\begin{equation}\label{eq:BX}
\begin{diagram}
\Delta^n\times X & \rTo^{\wt\sigma} & B \\
\dTo^{\mathrm{pr}_1} & & \dTo^p \\
\Delta^n & \rTo^\sigma & N\Fin_*
\end{diagram}.
\end{equation}
A diagram (\ref{eq:BX}) with $n=1$ represents a cocartesian lifting of $\sigma$ iff  
the restriction of $\wt\sigma$ to each vertex of $X$ is cocartesian. This implies that $q$
is a cocartesian fibration; the fiber of $q$ at $\langle n\rangle$ is $B_{\langle n\rangle}^X$,
so $q:B^X\rTo N\Fin_*$ is a SM $\infty$-category. 
 
Now the identity 
\begin{equation}
\Map(X,\Fun^\otimes(A,B))=\Map_{\Cat^\SM_\infty}(A,B^X)
\end{equation}
can be easily verified, which proves that $\Cat_\infty$-valued function space defined
be our general construction is the conventional one for $\cC=\Cat_\infty$.

\section{Rectification of algebras}
\label{sec:rectification}

\subsection{Introduction}
\label{ss:introrec}

Let $\cO$ be a topological operad (that is, a fibrant simplicial operad) with the set of colors $[\cO]$.
We denote $\cO^\otimes$ the corresponding $\infty$-operad in the sense of Lurie
~\cite{L.HA} which is defined as follows. 

Let $\Fin_*$ denote the category of finite pointed sets. 
Its objects are finite pointed
sets $I_*=I\sqcup\{*\}$ and the maps $f:I_*\to J_*$ satisfy
$f(*)=*$. 

We will define first of all a simplicial category $\wt{\cO}^\otimes$ over $\Fin_*$, 
and then will put $\cO^\otimes$ to be the (homotopy coherent) nerve 
of the simplicial category $\wt\cO^\otimes$, see~\cite{L.T}, 1.1.5.5. 
Here is the definition of $\wt\cO^\otimes$.

Its objects over $I_*\in\Fin_*$ are maps
$c:I\to[\cO]$ and the simplicial sets of morphisms over $f:I_*\to J_*$
defined by the formula
$$
\Map^f_{\wt\cO^\otimes}(c,d)=\prod_{j\in J}\cO(c|_{f^{-1}(j)},d(j)).
$$

The composition in $\wt{\cO}^\otimes$ is determined by the composition in $\cO$, see
the details in~\cite{L.HA}, 2.1.1.22.

Fix a commutative ring $k$. We are mostly interested in algebras over $\cO^\otimes$ with values in the SM $\infty$-category $\QC(k)$ of complexes of $k$-modules described in detail
in \ref{sss:QC}.

We want to compare the $\infty$-category $\Alg_\cO(\QC(k))$, as defined in
Lurie's book \cite{L.HA}, 2.1.3.1 (this is just the $\infty$-category
of operad maps $\cO^\otimes\rTo\QC(k)$), with the category 
of ``strict" $\cO$-algebras $\Alg^{\st}_\cO(C(k))$ defined as in 
Section~\ref{sec:models-a}. \footnote{Note that we have changed the notation in order
to distinguish two notions of $\cO$-algebra!}

Assume now we are given a quasiisomorphism of operads
 $\cR\to C_*(\cO,k)$ with $\cR$ homotopically sound. 
 
In this case, as we know,
the category  $\Alg^{\st}_\cR(C(k))$ admits a model structure
with quasiisomorphisms as weak equivalences and  surjective maps as fibrations.

Applying the nerve construction (see \cite{H.L}, 1.3) to $\Alg^\st_\cR(C(k))$, we get
an $\infty$-category. A certain effort is required in order to be able to interpret
a strict $\cR$-algebra as an object of $\Alg_\cO(\QC(k))$. Unexpectedly, the problem exists
even if $\cR=C_*(\cO)$. The reason is that
the singular chains functor $C_*:\sSet\rTo C(k)$ is not symmetric monoidal.

The construction of the functor $\Alg^\st_\cR(C(k))\rTo\Alg_\cO(\QC(k))$ is explained in Subsection~\ref{ss:phi} below. Once we have this functor,
the universal property of the $\infty$-localization 
yields an $\infty$-functor 
\begin{equation}\label{eq:Phi}
\Phi:N(\Alg_\cR^\st(C(k))\rTo \Alg_\cO(\QC(k)).
\end{equation}

Here is the central result of this paper.
\begin{thm}\label{thm:rect-alg}
Let $\cO$ be a topological operad and let $\cR\to C_*(\cO)$
be a homotopically sound replacement. Then the functor $\Phi$ (\ref{eq:Phi}) is an equivalence.
\end{thm}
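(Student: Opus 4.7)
The plan is to follow the strategy of Lurie's Theorem 4.1.4.4 in~\cite{L.HA}. Concretely, I will show that $\Phi$ preserves sifted colimits, identify its value on free algebras, and invoke that both $\infty$-categories are generated under sifted colimits by free algebras on objects of $\QC(k)^{[\cO]}$.

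First I would verify that $\Phi$ preserves filtered colimits and geometric realizations. On the strict side these colimits are computed on the underlying collection $\G(A)\in C(k)^{[\cO]}$: filtered colimits because the forgetful functor commutes with them, and geometric realizations of Reedy cofibrant simplicial diagrams of cofibrant algebras by a standard model-categorical argument, using Corollary~\ref{crl:excellent} to ensure the complexes in play remain cofibrant. On the Lurie side, sifted colimits in $\Alg_\cO(\QC(k))$ are computed at the level of the underlying object in $\QC(k)^{[\cO]}$. Since the construction of $\Phi$ in Subsection~\ref{ss:phi} is compatible with the forgetful functors on both sides, preservation of sifted colimits follows.

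Next I would identify $\Phi$ on free algebras. Formula~(\ref{eq:freealg2}) expresses the $d$-component of $\F_\cR(V)$, for a cofibrant $V$, as the direct sum over $c\in\pi_0(\Fin/[\cO])$ of the strict coinvariants $\cR(c,d)\otimes_{\Sigma_c}\bigotimes_iV_{c(i)}$. The free Lurie $\cO$-algebra on the image of $V$ in $\QC(k)^{[\cO]}$ has a parallel description with $\cR(c,d)$ replaced by $C_*(\cO(c,d))$ and strict coinvariants replaced by homotopy coinvariants. The $\Sigma$-cofibrancy of $\cR$ converts strict coinvariants into homotopy coinvariants, the quasiisomorphism $\cR(c,d)\rTo C_*(\cO(c,d))$ is $\Sigma_c$-equivariant, and the Eilenberg--MacLane shuffle map --- which is the structural ingredient used in Subsection~\ref{ss:phi} to produce an $\cO^\otimes$-algebra from a strict $\cR$-algebra --- assembles these into a canonical comparison $\Phi(\F_\cR(V))\rTo \F_\cO(V)$ that is an equivalence in $\Alg_\cO(\QC(k))$.

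Combining the two steps yields the theorem. Essential surjectivity follows because $\Alg_\cO(\QC(k))$ is generated under sifted colimits by free $\cO$-algebras (standard for algebras over an $\infty$-operad), each of which is in the essential image of $\Phi$. For fully faithfulness, the free-forgetful adjunction on each side identifies $\Map(\F_\cR(V),B)$ and $\Map(\F_\cO(V),\Phi(B))$ with the same mapping space $\Map_{\QC(k)^{[\cO]}}(V,\G(B))$, so $\Phi$ is fully faithful whenever the source is free; since every object of $N(\Alg^\st_\cR(C(k)))$ is a sifted colimit of free algebras --- via a transfinite cellular resolution whose finite stages are described explicitly by Proposition~\ref{prp:pullback} --- and $\Map(-,B)$ converts sifted colimits to limits, full faithfulness extends to all sources. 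The main obstacle is the free-algebra comparison in the previous paragraph: one must check that the strict symmetric group action on $\cR$ interacts with the only lax-symmetric Eilenberg--MacLane map coherently enough to yield the required homotopy-coherent free algebra in $\QC(k)$, and it is precisely the $\Sigma$-cofibrancy clause of homotopy soundness (Definition~\ref{dfn:hsound}) that guarantees this.
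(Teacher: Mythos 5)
Your overall strategy is the right one, and it is essentially the paper's: compare the two $\infty$-categories over $\QC(k)^{[\cO]}$ via the two free-forgetful adjunctions, using preservation of (suitable) sifted colimits plus agreement on free algebras. The paper packages this as an application of Lurie's monadicity criterion (Corollary 6.2.2.14 of \cite{L.HA}), listing seven hypotheses and checking them one by one; your phrasing unpacks the same idea more informally. However, the two places where your proposal is vague are precisely the two places where the real work lives, and as written they have gaps.

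First, the claim that every object of $N(\Alg^\st_\cR(C(k)))$ is a sifted colimit of free algebras \emph{via a transfinite cellular resolution whose finite stages are described by Proposition~\ref{prp:pullback}} is not correct: a cellular construction presents an algebra as a transfinite composition of pushouts of free maps, and a pushout is not a sifted colimit. What is actually needed --- and what the paper implicitly uses through the monadicity theorem --- is that every algebra is the geometric realization of its monadic bar resolution, which \emph{is} a sifted colimit of free algebras. Moreover, on the strict side the nontrivial point is Assertion~3$^{\st}$: that $NG^\st$ preserves geometric realizations of $G$-split simplicial objects, i.e.\ that the forgetful functor commutes with passage from a simplicial strict algebra to its homotopy colimit. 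Corollary~\ref{crl:excellent} (cofibrancy of underlying complexes) does not by itself give this; the paper has to introduce the operad $\cR^{\Delta^\op}$, show it is again homotopically sound, and prove the ``$\colim$-adapted'' properties of Lemma~\ref{lem:simplalg}, using the explicit cell structure of Proposition~\ref{prp:pullback}. That is where that proposition actually enters --- not to produce a sifted-colimit presentation by free algebras.

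Second, the identification $\Phi(\F_\cR(V))\simeq\F_\cO(V)$ (Assertion 4 in the paper) is asserted, not proved. Your remark about strict vs.\ homotopy coinvariants and $\Sigma$-cofibrancy is a correct heuristic, but the free Lurie algebra is computed by a colimit over the $\infty$-groupoid $\cP_{I,y}$ of active arrows, not over the discrete groupoid $\Fin/[\cO]$, and one must build an explicit comparison cone out of $\cP_{I,y}$ into $\QC(k)$ using the chain functor and a splitting of $N_*(\cR(c,y))\to N_*(C_*(\cO(c,y)))$, then invoke Kan extension along the Kan fibration $\cP_{I,y}\to N\Fin/[\cO]$ (Lemma~\ref{lem:31311} and the paragraphs following it). The Eilenberg--MacLane map alone does not ``assemble'' the comparison; the $\Sigma$-cofibrancy of $\cR$ is used to identify the naive colimit with the homotopy colimit at the end, which is a different role than the one you assign it. As written, the proposal treats the heart of the argument as a formality and would not survive without supplying these two constructions.
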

\subsubsection{}
\label{sss:onefoot}
The proof follows the idea of the proof of \cite{L.HA} 4.1.4.4
dealing with rectification of associative algebras. An $\infty$-categorical 
version of Barr-Beck theory \cite{L.HA}, 6.2, allows one to present an 
$\cO$-algebra $A$ in $\QC(k)$ as a colimit of its
monadic Bar-resolution. The latter consists of free algebras which can be 
easily lifted to $\Alg_\cR^\st(C(k))$.
The homotopy colimit of this simplicial object in $\Alg_\cR^\st(C(k))$
gives the lifting of $A$.
Further details of the proof are given in \ref{thm:proof} below.

But first of all we have to define the map $\Phi$ in a greater detail.

\subsection{Construction of $\Phi$}
\label{ss:phi}

Let $\cO$ be a fibrant simplicial operad and let $\cR\to C_*(\cO,k)$
be a quasiisomorphism of dg operads (bijective on colors) with $\cR$ homotopically sound.
 
The simplicial operad $\cO$ generates a simplicial PROP $P_\cO$ defined as follows. 
Let $\Fin$ be the category of finite sets. It can be considered as a subcategory of $\Fin_*$ via the functor $\Fin\to\Fin_*$ carrying a finite set $I$ to the pointed set $I_*$. One defines the simplicial category $P_\cO$ as the fiber product 
$\wt\cO^\otimes\times_{\Fin_*}\Fin$. The objects of $P_\cO$ are finite
 collections of colors of $\cO$, and the arrows are composed from the operations in $\cO$. The symmetric monoidal structure is defined
by disjoint union of collections.

The image of $P_\cO$  under $\fC_\dg$, see formula (\ref{eq:Cdg-sm}), is presented by the SM dg category $P_{C_*\cO}$ which is the $k$-linear PROP generated by dg operad $C_*(\cO)$.
We will replace $P_{C_*\cO}$ with an equivalent SM dg category $P_\cR$, the PROP generated by the dg operad $\cR$.

Any cofibrant $\cR$-algebra $A$ in $C(k)$ 
gives rise to a symmetric monoidal dg functor

$$ A: P_\cR\rTo C(k)$$

such that all $A(x),\ x\in P_\cR,$ are cofibrant, 
see~ \ref{crl:excellent}. 

This yields, in particular, an arrow in $\dg\Cat^\SM(k)$. Applying to it the functor $\fN_\dg$,
and composing with the unit map, we get
$$ P_\cO\rTo \fN_\dg(P_\cR)\rTo\fN_\dg(C(k))=\QC(k)^\otimes.$$
The above construction  defines a composition
\begin{eqnarray}
\Alg^\st_\cR(C(k))^c\to\Fun^\otimes(P_{\cR},C(k))\simeq\Fun^\otimes(P_{C_*\cO},C(k))\to\\
\nonumber\to\Fun^\otimes(\fN_\dg(P_{C_*\cO}),\QC(k)^\otimes)\to\Fun^\otimes(P_\cO,\QC^\otimes)=\Alg_\cO(\QC(k)).
\end{eqnarray}

Thus, we have a functor
$$ \phi:\Alg^\st_\cR(C(k))^c \rTo \Alg_\cO(\QC(k)).$$
 
It remains to check that $\phi$ carries weak equivalences of cofibrant 
$\cR$-algebras to an equivalence. This is really easy: the functor $\phi$ constructed above
commutes with the forgetful functors $G^\st$~
\footnote{we denoted it $\G$ in Section 2.} and $G$ in the following diagram

\begin{equation}\label{eq:Phi-G}
\begin{diagram}
\Alg^\st_\cR(C(k))^c & \rTo^\phi & \Alg_\cO(\QC(k)) \\
\dTo^{G^\st} & & \dTo^G \\
(C(k)^c)^{[\cO]} & \rTo^{\phi^\triv} & \QC(k)^{[\cO]}
\end{diagram},
\end{equation}
where $\phi^\triv$ is defined by  localization.

Since weak equivalences in $\Alg^\st_\cR$ are detected by $G^\st$ and since the functor $G$ is conservative
(see \cite{L.HA}, Lemma 3.2.2.6), the assertion follows.

\subsection{Proof of \ref{thm:rect-alg}}
\label{thm:proof}

\subsubsection{}
Look at the commutative diagram obtained from (\ref{eq:Phi-G}) by application of the nerve functor to $G^\st$.
\begin{equation}\label{eq:Phi-N}
\begin{diagram}
N(\Alg^\st_\cR(C(k)) && \rTo^\Phi && \Alg_\cO(\QC(k)) \\
&\rdTo^{NG^\st} & & \ldTo^G &\\
& & \QC(k)^{[\cO]} &&
\end{diagram}.
\end{equation}
 
The reasoning briefly explained in \ref{sss:onefoot} is formalized in  
Corollary 6.2.2.14 of \cite{L.HA}. It claims that 
the map $\Phi$ in (\ref{eq:Phi}) is an equivalence, provided the following properties are verified. 

\begin{itemize}
\item[1.] The functor $G$ is conservative.
\item[1$^{\st}$.] The functor $NG^\st$ is conservative.
\item[2.] The functor $G$ admits a left adjoint functor $F$.
\item[2$^{\st}$.] The functor $NG^\st$ admits a left adjoint functor $F^\st$.

\item[3.] Any $G$-split simplicial object in $\Alg_\cO(\QC(k))$ has a colimit and $G$ preserves this colimit.
\item[3$^{\st}$.] The same for $NG^\st$.
\item[4.] The unit map $X\to NG^\st F^\st(X)=G\Phi(F^\st(X))$ induces an 
equivalence
$$ F(X)\rTo \Phi(F^\st(X)).$$
\end{itemize}
Let us check the assertions 1--4.

The  properties 1,2,3 is are proven in \cite{L.HA}, see 3.2.2.6, 3.1.3.5 and 3.2.3.1.
\footnote{Note that 3.1.3.5 is applicable since tensor product in $\QC(k)$ commutes 
with colimits along each one of the arguments.} 

The functor $F^\st$ is obtained by application of the 
the nerve construction (see Proposition~1.5.1, \cite{H.L})
to the functor $\F_\cR$ which is left adjoint to the forgetful functor $G^\st:\Alg^\st_\cR(C(k))\to C(k)^{[\cO]}$. This proves 2$^\st$.

The functor $NG^\st$ is conservative as weak equivalences in $\Alg_\cR^\st$ are detected by $G^\st$.
Thus, it remains to verify the assertions 3$^\st$ and 4.

{\sl Assertion 3$^\st$.}  

According to~\cite{H.L}, 1.5.2, colimits in an $\infty$-category underlying a combinatorial model category can be calculated via derived colimits in the model category. This is applicable to
both $C(k)$ and $\Alg^\st_\cR(C(k))$.

A simplicial complex $X\in C(k)^{\Delta^\op}$ will be called 
{\sl $\colim$-adapted} if 
\begin{itemize}
\item The canonical map $\Left\colim X\rTo\colim X$ is a quasiisomorphism.
\item For all $n\in\Delta^\op$ the components $X_n\in C(k)$
are cofibrant.
\item The complex $\colim X$ is cofibrant.
\end{itemize}

Simplicial objects in $\Alg^\st_\cR(C(k))$ are algebras over a certain operad which we will denote $\cR^{\Delta^\op}$. In Lemma~\ref{lem:simplalg} below we check that
the operad $\cR^{\Delta^\op}$ is also homotopically sound.  

Since the category $\Delta^\op$ is sifted, the colimit over 
$\Delta^\op$ commutes with the forgetful functor $G$.

Therefore, in order to deduce Assertion 3$^\st$, it remains to prove
that the forgetful functor 
$$\Alg_{\cR^{\Delta^\op}}(C(k))=\Alg_\cR(C(k))^{\Delta^\op}\rTo
C(k)^{[\cR]\times\Delta^\op}$$
carries cofibrant simplicial algebras to $[\cR]$-collections of $\colim$-adapted simplicial
complexes. This is also proven in Lemma~\ref{lem:simplalg} below.

Let $C$ be a (small) category and $\cR$ a dg operad. We will now describe a dg operad $\cR^C$ such that
$\cR^C$-algebras are precisely the functors $C\to\Alg_\cR(C(k))$.

The colors of $\cR^C$ are pairs $(c,m)$ where $c$ is a color of $\cR$
and $m\in C$.

A collection of colors $I\to[\cR^C]$ is given by a pair $(c,m)$ where $c:I\to[\cR]$
is a collection of colors in $\cR$ and $m$ a function $I\to\Ob(C)$. 
The complex $\cR^C((c,m),(d,n))$
is defined as $\prod_i\Hom_C(m_i,n)\times\cR(c,d)$, and the composition
in $\cR^C$ is defined by the compositions in $\cR$ and in $C$.

\begin{lem}\label{lem:simplalg}
Assume $\cR$ is a homotopically sound operad.
Then 
\begin{itemize}
\item[a)] $\cR^{\Delta^\op}$ is also homotopically sound.
\item[b)]  the forgetful functor
$$ G:\Alg_{\cR^{\Delta^\op}}(C(k))\rTo C(k)^{[\cR]\times\Delta^\op}$$
carries cofibrant algebras to collections of $\colim$-adapted simplicial complexes.
\end{itemize}
\end{lem}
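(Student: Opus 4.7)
For part (a), my plan is to verify $\Sigma$-cofibrancy directly from the formula for operations of $\cR^{\Delta^\op}$, and admissibility via Theorem~\ref{thm:modelham}. The automorphism group $\Sigma_{(c,m)}$ of a collection $(c,m)\colon I\to[\cR]\times\Ob(\Delta^\op)$ is a subgroup of $\Sigma_c$, and
\[
\cR^{\Delta^\op}((c,m),(d,n))\;=\;\Bigl(\prod_{i\in I}\Hom_{\Delta^\op}(m_i,n)\Bigr)\otimes\cR(c,d)
\]
is $\Sigma_{(c,m)}$-cofibrant: $\cR(c,d)$ is cofibrant as a $\Sigma_c$-complex (hence by restriction as a $\Sigma_{(c,m)}$-complex), and tensoring a cofibrant $\Sigma$-complex with a discrete $\Sigma$-set preserves cofibrancy via the standard $p\otimes n\mapsto p\otimes p^{-1}n$ trick on free summands. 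For admissibility, apply Theorem~\ref{thm:modelham} to a contractible cofibrant collection $M$ concentrated at a single color $(c_0,[m_0])$: since coproducts in $\Fun(\Delta^\op,\Alg_\cR(C(k)))$ are levelwise, $(A\sqcup\F_{\cR^{\Delta^\op}}(M))([n])=A([n])\sqcup\F_\cR\bigl(\Hom_\Delta([n],[m_0])\otimes M_{(c_0,[m_0])}\bigr)$ with still cofibrant contractible generator at color $c_0$, and admissibility of $\cR$ delivers a levelwise quasi-isomorphism.

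For part (b), property (ii) follows immediately from (a) and Corollary~\ref{crl:excellent}. Property (iii) holds because $\colim_{\Delta^\op}$ is left Quillen adjoint to the constant functor (which preserves levelwise fibrations and trivial fibrations), so $\colim A$ is a cofibrant $\cR$-algebra and $(\colim A)_c$ is cofibrant by Corollary~\ref{crl:excellent} applied to $\cR$; since $G$ commutes with sifted colimits, this cofibrant complex equals $\colim G(A)_c$.

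Property (i), that $\Left\colim G(A)_c\to\colim G(A)_c$ is a quasi-isomorphism, is the main content; I would prove it by induction on the cell structure of the cofibrant algebra $A$. For a generating cofibration $\F_{\cR^{\Delta^\op}}(V)\to\F_{\cR^{\Delta^\op}}(W)$ with $V$ concentrated at $(c_0,[m_0])$ and cofibrant, the simplicial complex at color $c$ decomposes as
\[
\bigoplus_m\cR(c_0^m,c)\otimes_{\Sigma_m}V_{(c_0,[m_0])}^{\otimes m}\otimes k\bigl[(\Delta^{[m_0]})^{\otimes m}\bigr],
\]
and each summand is $\colim$-adapted because $|(\Delta^{[m_0]})^{\otimes m}|\simeq(\Delta^{m_0})^m$ is $\Sigma_m$-equivariantly contractible (equivariant retraction onto the diagonal, then onto a point), while $\cR(c_0^m,c)\otimes V_{(c_0,[m_0])}^{\otimes m}$ is $\Sigma_m$-cofibrant by $\Sigma$-cofibrancy of $\cR$ and the diagonal-to-trivial tensor trick. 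For a pushout of such a cell along $A$, Proposition~\ref{prp:pullback} presents $G(B)$ as a sequential colimit of cofibrations described by the pushouts~(\ref{eq:BB}) built from the enveloping operad $\cR^{\Delta^\op}_A$, which is itself $\Sigma$-cofibrant (by the colored analog of \cite{BM0}, 5.4, using cofibrancy of $A$ and soundness of $\cR^{\Delta^\op}$ from (a)); the same $\Sigma_m$-equivariant contractibility then propagates. Transfinite compositions and retracts preserve $\colim$-adaptedness automatically.

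The hard part will be this pushout step: one must carefully track how the simplicial structure interacts with the $\Sigma$-quotients in~(\ref{eq:BB}), using the $\Sigma$-cofibrancy of $\cR^{\Delta^\op}_A$ to transport the equivariant contractibility of the simplicial-set factors through successive cell attachments so that derived and ordinary coinvariants over $\Sigma$ agree at every stage.
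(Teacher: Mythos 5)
Your part (a) matches the paper's argument: the paper likewise proves admissibility of $\cR^C$ for arbitrary $C$ via Theorem~\ref{thm:modelham}, observing that freely joining $M$ at $(c_0,m)$ gives at each object $n$ a free $\cR$-extension by $\Hom_C(m,n)\otimes M$, and disposes of $\Sigma$-cofibrancy by remarking that $\Sigma_{(c,m)}$ is a subgroup of $\Sigma_c$. You usefully make explicit what the paper leaves implicit: that restriction to the subgroup, and tensoring by a discrete $\Sigma$-set via the untwisting isomorphism, both preserve projective cofibrancy. That is the right way to fill in the terse claim.

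For part (b) you take a genuinely different route for properties (ii) and (iii): observing that the constant functor is right Quillen for the (projective = transferred) model structure, so $\colim_{\Delta^\op}$ is left Quillen, reducing both to Corollary~\ref{crl:excellent}. The paper instead derives all three properties at once from the cellular filtration. Your argument is cleaner for these two items and worth recording.

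For property (i), however, there is a gap. Your strategy --- cellular filtration, Proposition~\ref{prp:pullback}, $\Sigma$-cofibrancy of the enveloping operad $\cR^{\Delta^\op}_A$ --- is the same as the paper's, and your explicit verification for a free algebra (using $\Sigma_m$-equivariant contractibility of $(\Delta^{m_0})^{\times m}$ and the untwisting trick to get levelwise $\Sigma_m$-cofibrancy) is correct. But the assertion that ``the same $\Sigma_m$-equivariant contractibility then propagates'' through the pushout step is not justified and cannot be taken for granted. The components $\cR^{\Delta^\op}_A\bigl((c,m),(d',[n])\bigr)$ of the enveloping operad are \emph{not} of the shape (constant cofibrant complex) $\otimes$ (contractible simplicial set); they are built up through the cell attachments defining $A$, so the simplicial-set factor with a visible equivariant contraction is lost after the very first pushout. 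One would need an auxiliary induction showing that these enveloping-operad components are themselves levelwise $\Sigma$-cofibrant \emph{and} $\colim$-adapted simplicial objects, and you have not set this up. The paper sidesteps exactly this difficulty by importing the closure properties of $\colim$-adapted morphisms from~\cite{L.HA}, 4.1.4.13 --- in particular closure under $\bigwedge$, under base change, and the fact that cofibrations of constant cofibrant objects are $\colim$-adapted --- and letting those do the combinatorial work, rather than attempting a fresh contractibility computation at each stage. Adopting that toolkit (and stating the auxiliary induction for $\cR^{\Delta^\op}_{A_\alpha}$ explicitly) is what is needed to close the step you flag as ``the hard part.''
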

\begin{proof} 

{\sl Assertion (a).}

We prove that $\cR^C$ is homotopically sound for any category $C$.
First of all, let us prove admissibility of $\cR^C$. 
Let $A$ be an $\cR^C$ algebra and $M$ a cofibrant contractible complex.
Choose a color $(c,m)$ of $\cR^C$. 
We have to prove that the map $A\to B$
is a weak equivalence, where $B$ is obtained from $A$ by freely
joining $M$ at color $(c,m)$. In other words, we have to check that 
for each $n\in C$ the map $A(n)\to B(n)$ is a weak equivalence. Note that
$B(n)$ is freely generated over $A(n)$ as $\cR$-algebra by 
$\Hom_C(m,n)\times M$ which is also cofibrant and contractible. 
Therefore, $\cR^C$ is admissible by the criterion \ref{thm:modelham}.

It remains to make sure $\cR^C$ is $\Sigma$-cofibrant.
The automorphism group of a collection $(c,m)$ of colors in $\cP^C$ is a subgroup of the
automorphism group of the collection $c$ in $\cR$. Thus, if $\cR$ is
$\Sigma$-cofibrant, $\cR^C$ is $\Sigma$-cofibrant as well.
This proves the assertion a).

{\sl Assertion (b).}
Here we follow, with minor amendments, Lurie \cite{L.HA}, 4.1.4.13
\footnote{We use  the term "$\colim$-adapted" instead of Lurie's "good".}.
A morphism of simplicial complexes $X\rTo Y$ will be called {\sl $\colim$-adapted},
if
\begin{itemize}
\item $X,Y$ are $\colim$-adapted.
\item For each $n\in\Delta^\op$ the map $X_n\rTo Y_n$ is a cofibration in $C(k)$.
\item The map $\colim X\rTo\colim Y$ is a cofibration in $C(k)$.
\end{itemize}

The collection of $\colim$-adapted morphisms satisfies the following properties 
(proven in \cite{L.HA}, 4.1.4.13).

\begin{itemize}
\item Let $X$ be a cofibrant complex. Then the constant simplicial complex defined by $X$ is $\colim$-adapted.
\item Let $f,g$ be two $\colim$-adapted maps. Then $f\wedge g$ defined as in (\ref{eq:wedge}) is $\colim$-adapted.
\item All cofibrations in $C(k)^{\Delta^\op}$ are $\colim$-adapted.
\item The collection of $\colim$-adapted morphisms is closed under transfinite composition.
\item Base change: if $f:X\to Y$ and $Z$ in a pushout diagram
\begin{equation}
\begin{diagram}
X & \rTo^f & Y \\
\dTo & & \dTo \\
Z&\rTo^{f'}& T
\end{diagram}
\end{equation}
are $\colim$-adapted, then $f'$ is also $\colim$-adapted.
\end{itemize}

Let us now prove assertion (b). 

Obviously, a retract of a $\colim$-adapted simplicial complex is $\colim$-adapted,
so we may assume that the cofibrant algebra $A$ is a colimit of a transfinite 
sequence of cofibrations $A=\colim A_\alpha$ where $A^0$ is the initial 
$\cR^{\Delta^\op}$-algebra,
$A^\alpha\simeq\colim_{\beta<\alpha}A_\beta$ for $\alpha$ limit ordinal, and such that
$A_{\alpha+1}$ is obtained from $A_\alpha$ via a pushout diagram
\begin{equation}
\begin{diagram}
\F_{\cR^{\Delta^\op}}(V) & \rTo & \F_{\cR^{\Delta^\op}}(W) \\
\dTo && \dTo \\
A_\alpha & \rTo & A_{\alpha+1}
\end{diagram},
\end{equation}
where $V\to W$ is a cofibration of cofibrant objects in $C(k)^{[\cR]\times\Delta^\op}$.

According to \ref{prp:pullback}, $A_{\alpha+1}$, considered as an object of $C(k)^{[\cR]\times\Delta^\op}$, is obtained as a colimit of arrows obtained by pushforward along
the maps (\ref{eq:BB}). Since the components of $A_{c(i)}$ are cofibrant complexes
and $\cR$ is homotopically sound, the properties of $\colim$-adapted maps listed above imply that the map $\G(A_\alpha)\to \G(A_{\alpha+1})$ is $\colim$-adapted.
This proves the assertion.

\end{proof}

\subsubsection{}
To prove assertion 4, we will need a minor 
generalization  of \cite{L.HA}, 3.1.3.11, describing free algebras generated 
by a collection of objects corresponding to different colors.

Let $\cO^\otimes$ be an $\infty$-operad and $a:I\to[\cO]$ a collection of 
objects in $\cO$. Let $\Theta\subset\Fin_*$ denote the subcategory defined 
by the inert arrows in $\Fin_*$, so that $\Theta$ is the ``trivial 
$\infty$-operad". We identify $\Theta^I$ with the category whose objects are
finite sets over $I$ and whose morphisms are inert partial maps over $I$. There is 
an (essentially unique) extension of the map $a:I\to\cO$ to a 
map $\theta:\Theta^I\to \cO^\otimes$ of $\infty$-operads.

Let $q:\cC^\otimes\to\cO^\otimes$ be an $\cO$-monoidal $\infty$-category. 
A collection of objects $X=\{X_i\in\cC_i\},\ i\in I$ 
defines (essentially uniquely) a $\Theta^I$-algebra $\bar X$ in $\cC$ such that 
$q\circ\bar X=\theta:\Theta^I\to\cO^\otimes$. 
Let $F\in\Alg_\cO(\cC)$. The lemma 
below allows one to check whether a given morphism $\bar X\to\theta^*(F)$ exhibits 
$F$ as free $\cO$-algebra generated by the collection $\{X_i\},\ i\in I$.
 
 Denote $\Theta^I_\iso$ the maximal subgroupoid of $\Theta^I$. In other words, 
 this is the groupoid of finite sets over $I$.
For each $y\in \cO$ we define a Kan simplicial set $\cP_{I,y}$ 
as the full sub($\infty$-)category of 
$\Theta^I_\iso\times_{\cO^\otimes}\cO^\otimes_{/y}$ spanned by  
the objects whose component in $\cO^\otimes_{/y}$ is given by an active arrow.

One has a canonical map $h:\cP_{I,y}\times\Delta^1\rTo \cO^\otimes$ defined as follows.

Its restriction $h_0$ to $\cP_{I,y}\times\{0\}$ is the composition $\cP_{I,y}\to \Theta^I_\iso\to\cO^\otimes$
whereas the restriction $h_1$ to $\cP_{I,y}\times\{1\}$ carries  
everything to $\{y\}\in\cO$. In general,
if $\pi$ is a $k$-simplex of $\cP_{I,y}$ and $\sigma_i$ is a $k$-simplex of $\Delta^1$ having $i$ times 
value $1$ ($i=0,\ldots,k+1$), then the image of $(\pi,\sigma_i)$ is defined by the formula
\begin{equation}
h(\pi,\sigma_i)=\left\{
\begin{array}{ll}
d_{k+1}\tau, & i=0\\
s^{i-1}_{k-i+1}d^i_{k-i+1}\tau, & i>0
\end{array}
\right.
\end{equation}
 where $\tau$ is the $k+1$-simplex of $\cO^\otimes$ defining
the projection of $\pi$ to $\cO^\otimes_{/y}$.
 
Now, the map $q:\cC^\otimes\to\cO^\otimes$ being cocartesian
fibration,
the map $h:\cP_{I,y}\times\Delta^1\rTo\cO^\otimes$ can be lifted
to a map $H:\cP_{I,y}\times\Delta^1\rTo\cC^\otimes$
so that the restriction $H_0$ to $\cP_{I,y}\times\{0\}$ is 
the composition
$$ \cP_{I,y}\rTo\Theta^I_\iso\rTo^{\bar X}\cC^\otimes$$
and such that for each $\pi\in \cP_{I,y}$ the arrows
$H(\pi,\Delta^1)$ is cocartesian.

This yields a map $H_1: \cP_{I,y}\rTo\cC_y$.

\begin{Dfn}
The colimit of $H_1$ constructed above, if exists, is denoted 
$\Sym_\cO(\bar X)_y$. 
\end{Dfn}

Let now, $\bar X$ be as above, and let $A$ be a a $\cO$-algebra
in $\cC$. A choice of a map of $\Theta^I$-algebras 
$f:\bar X\to\theta^*(A)$,
which is given essentially by a collection of maps
$f_i:X_i\rTo A(i),\ i\in I$,
defines  a canonical collection of maps
\begin{equation}
F_y:\Sym_\cO(\bar X)_y\rTo A_y
\end{equation}
as follows. The map $f:\bar X\to\theta^*(A)$ induces a map
$$H_0\rTo A\circ h_0:\cP_{I,y}\rTo\cC^\otimes.$$

By construction of $\Sym_\cO(\bar X)_y$, one obtains a canonical map
$$H\rTo A\circ h:\cP_{I,y}\times\Delta^1\rTo\cC^\otimes$$
whose restriction to $\cP_{I,y}\times\{1\}$ yields a map 
$F_y:\Sym_\cO(\bar X)_y\rTo A_y$.

The following lemma is a straightforward generalization of \cite{L.HA}, 3.1.3.11.

\begin{lem}\label{lem:31311}
A collection of maps $f_i:X_i\to \theta^*(A)_i,\ i\in I$, exhibits $A$ as a free 
algebra generated by $X$ iff for all $y\in\cO$
the natural map
$$F_y:\Sym_{\cO}(\bar X)_y\rTo A_y$$
is an equivalence.
\end{lem}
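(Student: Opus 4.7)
The strategy is to reduce the lemma to the general theory of operadic left Kan extensions developed in \cite{L.HA}, Sections 3.1.2--3.1.3. By the universal property of free algebras, a collection of maps $f_i:X_i\to\theta^*(A)_i$ exhibits $A$ as the free $\cO$-algebra generated by $\bar X$ precisely when the corresponding map $\bar X\to\theta^*(A)$ exhibits $A$ as the operadic left Kan extension of $\bar X:\Theta^I\to\cC^\otimes$ along $\theta:\Theta^I\to\cO^\otimes$. This is the multi-colored analogue of the reduction underlying Lurie's \cite{L.HA}, 3.1.3.11, the single-color case being $|I|=1$. According to the pointwise formula for operadic left Kan extensions (\cite{L.HA}, 3.1.3.2), such an extension exists and is an equivalence at $y\in\cO$ if and only if $A_y$ corepresents a certain operadic colimit indexed by the $\infty$-category of active arrows from tuples of generators to $y$.

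The first step of the proof is therefore to identify $\cP_{I,y}$ with this indexing category. Its objects are pairs consisting of a finite set over $I$ (i.e.\ a tuple of generators with specified indices) together with an active morphism in $\cO^\otimes$ landing at $y$; its morphisms are the isomorphisms of labelings compatible with the active arrows. A direct comparison with the definition of the operadic overcategory used in \cite{L.HA}, 3.1.1.20, shows that $\cP_{I,y}$ agrees with the correct indexing $\infty$-category.

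The second step is to verify that the functor $H_1:\cP_{I,y}\to\cC_y$, constructed via cocartesian lifts of $h:\cP_{I,y}\times\Delta^1\to\cO^\otimes$, agrees with the diagram appearing in the operadic colimit formula. At an object of $\cP_{I,y}$ determined by a map $J\to I$ and an active morphism $\alpha:(a(j))_{j\in J}\to y$ in $\cO^\otimes$, the cocartesian lift gives $H_1$ the value $\alpha_!(X_{j})_{j\in J}\in\cC_y$, which is exactly the term appearing in the pointwise operadic colimit formula. Naturality of $F_y$ in $A$ then identifies $F_y$ with the canonical comparison map whose being an equivalence (for all $y$) characterizes operadic left Kan extensions by \cite{L.HA}, 3.1.3.2.

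The main technical obstacle will be the careful bookkeeping of cocartesian lifts of the map $h$ when different colors are involved: one must check that the simplicial identities used to define $h$ on higher simplices are compatible with the projection to $\cO^\otimes_{/y}$, that the resulting cocartesian lifts depend (up to contractible ambiguity) only on the data, and that the collection of maps $f_i$ induces the comparison $F_y$ in the expected way. Once these coherence checks are in place, the lemma follows from the pointwise criterion for operadic left Kan extensions in \cite{L.HA}, 3.1.3.2.
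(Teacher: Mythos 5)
Your proposal takes the same route as the paper, which simply asserts that the lemma is a straightforward generalization of Lurie's 3.1.3.11 and gives no further argument. You have correctly identified that the content is the reduction to the pointwise criterion for operadic left Kan extensions (\cite{L.HA}, 3.1.3.2), with the main bookkeeping being the identification of $\cP_{I,y}$ with the relevant active-arrow indexing category and of $H_1$ with the diagram in the colimit formula; your sketch fills in exactly the details the paper leaves implicit.
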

\qed

We will now apply the above lemma to prove Assertion 4.
In our context $\cO^\otimes$ is the $\infty$-operad constructed from a 
topological operad $\cO$. 
We put $I=[\cO]$ and we represent the collection of $X_i\in\QC(k)$ by their cofibrant representatives $Y_i$.
Let $\F_\cR(Y)$ be the free $\cR$-algebra on $Y=\{Y_i\}_{i\in[\cO]}$.
We denote $\F$ to be the $\cO$-algebra in $\QC(k)$ corresponding to $\F_\cR(Y)$ as explained
in \ref{ss:phi}. 
We have canonical  maps $Y_i\to\theta^*(\F)_i$, so we can 
apply the above lemma.

It remains to check that the maps 
$F_y:\Sym_{\cO}(\bar X)_y\to \F_y$
are equivalences.

Recall that $\Theta^I_\iso$ identifies with the groupoid of collections $\Fin/[\cO]$
used in the description of the (classical) free algebra, see~\ref{thm:adjalg}.
One has a canonical projection $\cP_{I,y}\to N\Fin/[\cO]$ and $n$-simplices
of $\cP_{I,y}$ over $\sigma:c_0\to\ldots\to c_n$ in $N\Fin/[\cO]$ correspond to $n$-simplices of $\cO(c_0,y)$.

 The map $\Sym_\cO(\bar X)_y\rTo\F_y$ is constructed as follows.
The map $H_1:\cP_{I,y}\rTo\QC$ is the composition
\begin{equation}\label{eq:H1}
\cP_{I,y}\rTo N\Fin/[\cO]\rTo \QC,
\end{equation}
where the second arrow carries a collection $c:J\to[\cO]$ to $\otimes_{j\in J}Y_{c(j)}$. 

The canonical maps $Y_i\rTo\theta^*(\F)_i$ allow one to extend $H_1$ to a functor
$$H_1^\triangleright:\cP^\triangleright_{I,y}\rTo\QC$$
carrying the vertex on the left to $\F_y=\colim\Free(Y)_y,$
where the functor $\Free(Y)_y:\Fin/[\cO]\rTo C(k)$ is given by the formula $\Free(Y)_y(c)=\cR(c,y)\otimes
\bigotimes_{j\in J}Y_{c(j)}$, see formula~(\ref{eq:freealg}). 

The extension $H_1^\triangleright$ is constructed as follows. One chooses a 
section $s$ of the projection $N_*(\cR(c,y))\rTo N_*(C_*(\cO(c,y)))$. We will denote by the
same letter the composition $s:\cO(c,y)\rTo N_*(C_*(\cO(c,y)))\rTo N_*(\cR(c,y))$.
Now, given an $n$-simplex $(\sigma,\tau)$ of $\cP_{I,y}$, where $\sigma:c_0\to\ldots\to c_n$
in $N\Fin/[\cO]$ and $\tau\in\cO(c_0,y)_n$, one extends it with the map
$$ C_*(\Delta^n)\otimes\bigotimes_{j\in J_0}Y_{c_0(j)}\rTo \F_y$$
defined by the map $C_*(\Delta^n)\rTo \cR(c_0,y)$ determined by $s(\tau)$. 
It remains to check that 
$H_1^\triangleright:\cP^\triangleright_{I,y}\rTo\QC(k)$
is a colimit diagram.

In fact, since the functor $H_1$ factors through $N\Fin/[\cO]$,
see (\ref{eq:H1}), $\colim\ H_1$ can be calculated as the colimit
of the left Kan extension of $H_1$ via the projection 
$$ \cP_{I,y}\rTo N\Fin/[\cO].$$
This functor is a Kan fibration, so by \cite{L.T}, 4.3.3.1 the left
Kan extension of $H_1$ is precisely the functor $\mathfrak{F}(Y)_y$. 
Since $\cR$ is $\Sigma$-cofibrant, its (naive) colimit calculates
as well the required homotopy colimit.

\qed

\subsection{Algebras over a PROP}

Theorem~\ref{thm:rect-alg} allows one to get a certain rectification result for algebras  over a topological PROP, or, more generally, over any SM topological category.

A topological SM category $\cP$ determines a SM $\infty$-category $\cP^\otimes$,
see~\ref{ss:introrec},
and the $\infty$-category of algebras $\Fun^\otimes(\cP^\otimes,\QC(k)^\otimes)$.

We are going to give a ``classical" description of this notion of $\infty$-algebra.

\begin{dfn}\label{dfn:prop-alg}
Let $\cR$ be a symmetric monoidal dg category. A homotopy $\cR$-algebra in $C(k)$ is a lax SM functor
$A:\cP\rTo C(k)$ such that the natural map
$$ A(x)\otimes A(y)\rTo A(x\otimes y)$$
induces a quasiisomorphism $A(x)\otimes^\Left A(y)\rTo A(x\otimes y)$ for all $x,y\in \cP$.
\end{dfn}

We denote by $\cP^o$ the dg operad defined by $\cP$. A lax SM functor $\cP\to C(k)$ is just a strict $\cP^o$-algebra in $C(k)$. 

We need a minor generalization of the above definition. A dg operad $\cR$ will be called
{\sl weak SM category}  if the corresponding operad enriched over
the derived category of $k$ is an (enriched) SM category.

The only example of weak SM category we need is the following.

\begin{lem}
Let $\cP$ be a SM dg category and let $\cR\to \cP^o$ be a homotopically
sound replacement of dg operads.
Then $\cR$ is a weak SM category.
\end{lem}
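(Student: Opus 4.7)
The plan is to use the characterization of symmetric monoidal categories among colored operads given in the introductory discussion of Section 2: an operad $\cO$ comes from an SM category (enriched over the ambient base) if and only if (i) for each collection $c:I\to[\cO]$ the functor $d\mapsto\cO(c,d)$ is representable by some object $m(c)$, and (ii) for each $f:I\to J$ the canonical morphism $m(c)\to m(d)$ built from the operad structure (with $d(j)=m(c|_{f^{-1}(j)})$) is an isomorphism. I would verify these two conditions for $\cR$, interpreted in the derived enrichment, by transferring them from $\cP^o$ along the homotopically sound replacement $\cR\to\cP^o$.

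First I would set up the derived enrichment. Since $\cR$ is homotopically sound, and in particular $\Sigma$-cofibrant, each complex $\cR(c,d)$ is cofibrant over $k$; therefore the tensor products appearing in the composition maps of $\cR$ compute derived tensor products, so $\cR$ gives a well-defined operad enriched over $D(k)$, which I will denote $\bar\cR$. The quasi-isomorphism $\cR\to\cP^o$ is a levelwise quasi-isomorphism of operation complexes that is bijective on colors; thus it presents $\bar\cR$ as the derived $D(k)$-enriched operad attached to $\cP^o$.

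Second I would observe that the SM-category conditions hold tautologically for $\cP^o$. By construction, $[\cP^o]=\Ob(\cP)$ and, for any collection $c:I\to\Ob(\cP)$, one has $\cP^o(c,d)=\Hom_\cP(\bigotimes_i c(i),d)=\cP^o_1(m(c),d)$ with $m(c):=\bigotimes_i c(i)\in\cP$; this exhibits strict representability of $d\mapsto\cP^o(c,d)$ by $m(c)$, and the identity persists after applying $C(k)\to D(k)$. Similarly, the canonical maps $m(c)\to\bigotimes_j m(c|_{I_j})$ are strict isomorphisms in $\cP$ because $\cP$ is an SM category, and hence remain isomorphisms in $\bar\cR_1\simeq\bar{\cP^o}_1$. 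Combining with the previous step, both representability and the isomorphism of canonical maps hold in $\bar\cR$ with representing objects $m(c)\in[\cR]=[\cP]$.

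The main obstacle I expect is a purely bookkeeping one: making precise the statement that $\bar\cR$ and $\bar{\cP^o}$ are equivalent as enriched operads in $D(k)$, and that enriched representability and the iso-of-canonical-map condition transfer along such equivalences. This is where $\Sigma$-cofibrancy of $\cR$ (which guarantees the operation complexes of $\cR$ serve as cofibrant models for those of $\cP^o$, so all compositions and tensor products involved are homotopically correct) is essential; once that is set up, the verification of conditions (i) and (ii) reduces to the strict statements for $\cP$ as above.
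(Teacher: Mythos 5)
Your argument is correct and is essentially the argument the paper suppresses with a bare ``\qed'': since $\cR$ is $\Sigma$-cofibrant its operation complexes are $k$-cofibrant, so passing $\cR$ to $D(k)$ genuinely produces an enriched operad, and the levelwise quasi-isomorphism $\cR\to\cP^o$ identifies that $D(k)$-operad with the one coming from $\cP^o$, for which representability by $m(c)=\bigotimes_i c(i)$ and invertibility of the canonical arrows are strict tautologies inherited from $\cP$. The ``bookkeeping obstacle'' you flag at the end is not really a gap: because the structure maps of $\cR$ already compute derived compositions, the comparison with $\cP^o$ is an isomorphism of $D(k)$-operads on the nose, and both of your conditions transfer directly.
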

\qed

If $\cR$ is a weak SM category and $x,y\in [\cR]$ two objects, tensor product $x\otimes y$ is
defined uniquely up to equivalence. Then Definition \ref{dfn:prop-alg}
is applicable also for weak SM categories. We will repeat it once more.

\begin{dfn}\label{dfn:prop-alg-weak}
Let $\cR$ be a weak SM dg category. A homotopy $\cR$-algebra in $C(k)$ is an algebra over the operad $\cR$ such that the natural map
$$ A(x)\otimes A(y)\rTo A(x\otimes y)$$
induces a quasiisomorphism $A(x)\otimes^\Left A(y)\rTo A(x\otimes y)$ for all $x,y\in [\cR]$.
\end{dfn}

Let $\cP$ be a topological SM category and let $\cR\rTo C_*(\cP^o,k)$ be a homotopically sound replacement.

The $\infty$-category of homotopy
$\cR$-algebras is defined as the full subcategory of $N(\Alg^\st_{\cR}(C(k)))$ consisting of 
homotopy $\cR$-algebras. It can be otherwise described as the DK localization of the category of cofibrant homotopy $\cR$-algebras, with respect to weak equivalences.

Theorem~\ref{thm:rect-alg} immediately implies the following result.

\begin{crl}\label{crl:rect-prop-alg}
Let $\cP$ be a topological SM category, and let $\cR\to C_*(\cP,k)$ be a homotopically sound replacement of $C_*(\cP,k)$ considered as 
an operad.
Let $\cP^\otimes$ be the SM $\infty$-category defined by $\cP$. Then the 
 equivalence of $\infty$-categories
$$ \Phi:N(\Alg^\st_{\cR}(C(k))\rTo\Alg_{P^\otimes}(\QC(k))$$
induces an equivalence of the subcategory of homotopy $\cR$-algebras with 
$\Fun^\otimes(\cP^\otimes,\QC(k)^\otimes)$.
\end{crl}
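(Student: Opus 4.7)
The plan is to apply Theorem~\ref{thm:rect-alg} to the underlying topological operad $\cP^o$ of $\cP$ and then match up the relevant full subcategories under the resulting equivalence.

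By the lemma preceding the corollary, $\cR$ inherits from $\cP^o$ the structure of a weak SM category, so Definition~\ref{dfn:prop-alg-weak} applies and the notion of homotopy $\cR$-algebra makes sense. Theorem~\ref{thm:rect-alg}, applied to $\cO = \cP^o$, then yields an equivalence
\[
\Phi : N(\Alg^\st_\cR(C(k))) \to \Alg_{\cP^o}(\QC(k)) = \Alg_{\cP^\otimes}(\QC(k)),
\]
where the identification on the right uses that the underlying $\infty$-operad of $\cP^\otimes$ is $(\cP^o)^\otimes$.

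Since both sides of the desired equivalence are full subcategories, it suffices to show that a cofibrant strict $\cR$-algebra $A$ is a homotopy $\cR$-algebra iff $\Phi(A)$ is a SM functor of SM $\infty$-categories. On the strict side, the relevant structure map $A(x) \otimes A(y) \to A(x \otimes y)$ is obtained by applying $A$ to a cycle representative in $\cR((x,y); x \otimes y)$ of the homology class of $\id_{x \otimes y} \in \cP^o((x,y); x \otimes y)$; since $A(x)$ and $A(y)$ are cofibrant by Corollary~\ref{crl:excellent}, the ordinary tensor product computes the derived one, so the condition is exactly that this map is a quasi-isomorphism. On the $\infty$-side, the standard characterization of SM functors says that $\Phi(A)$ is SM iff the analogous lax structure map $\Phi(A)(x) \otimes \Phi(A)(y) \to \Phi(A)(x \otimes y)$ in $\QC(k)$ (induced by the active cocartesian edge over $\langle 2 \rangle \to \langle 1 \rangle$ in $\Fin_*$) is an equivalence for every pair $x, y$.

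The key step is to verify that these two lax structure maps coincide under $\Phi$. This amounts to bookkeeping in the construction of $\Phi$ in Section~\ref{ss:phi}: the cofibrant $\cR$-algebra $A$ determines a strict SM dg functor $A \colon P_\cR \to C(k)$ carrying the formal tensor $x \otimes y$ in $P_\cR$ to $A(x) \otimes A(y)$; under the equivalence $P_\cR \simeq P_{C_*\cP^o}$, application of $\fN_\dg$, and precomposition with the canonical map $\cP^\otimes \to \fN_\dg(P_{C_*\cP^o})$, one obtains $\Phi(A)$. In both cases the lax structure map at $(x,y)$ is governed by the same cycle in $\cR((x,y); x \otimes y)$ representing $\id_{x \otimes y}$, so by naturality the two lax structure maps are carried to each other by $\Phi$. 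The main obstacle is precisely this bookkeeping identification of the formal tensor in $P_\cR$ with the actual tensor in $\cP^\otimes$ through each step of the construction of $\Phi$; once it is checked, $\Phi$ restricts to the claimed equivalence between homotopy $\cR$-algebras and $\Fun^\otimes(\cP^\otimes, \QC(k)^\otimes)$.
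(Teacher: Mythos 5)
Your plan is exactly the one the paper intends: apply Theorem~\ref{thm:rect-alg} to $\cP^o$ and then check that the equivalence $\Phi$ matches up the two full subcategories of "tensor-compatible" objects. The paper compresses this to a single $\qed$, so your job was to make the bookkeeping explicit, and you do that correctly: the strict-side lax structure map $A(x)\otimes A(y)\to A(x\otimes y)$ is $A$ applied to (a cycle representing) the canonical element of $\cR((x,y);x\otimes y)$, and by Corollary~\ref{crl:excellent} the cofibrancy of $A(x),A(y)$ lets the underived tensor compute the derived one, so the condition of Definition~\ref{dfn:prop-alg-weak} is precisely that this map is a quasi-isomorphism; on the $\infty$-side this same element governs the lax structure map, and naturality of the construction in \ref{ss:phi} identifies the two. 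This is the content of the corollary and your account of it is accurate.

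One genuine (though small) gap, which you in fact inherit from the paper's Definition~\ref{dfn:prop-alg-weak}: a map of $\infty$-operads between SM $\infty$-categories is a SM functor iff it preserves \emph{all} cocartesian edges, and these are generated over the active maps $\langle 2\rangle\to\langle1\rangle$ \emph{and} $\langle 0\rangle\to\langle1\rangle$. You only verify the binary condition. The nullary condition --- that the unit map $k\to\Phi(A)(1_{\cP})$ is an equivalence --- is not a formal consequence of the binary one: the zero $\cR$-algebra satisfies the binary condition vacuously ($0\otimes 0\to 0$ is an isomorphism) but not the unit condition unless $k=0$. So to make both the corollary's statement and your argument close up, the definition of homotopy $\cR$-algebra must also require that the natural map $k\to A(1)$ be a quasi-isomorphism, and your verification should add the corresponding line for $\langle 0\rangle\to\langle1\rangle$. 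With that amendment the proof is complete and agrees with the paper's intent.
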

\qed

\section{Modules}

In this section we deduce from Theorem~\ref{thm:rect-alg} the rectification for modules over
operad algebras. Our definition of module over an $\cO$-operad algebra
 is very straightforward.
For any $\infty$-operad $\cO$ we define a new $\infty$-operad denoted $M\cO$ such that
algebras over $M\cO$ are pairs $(A,M)$ where $A$ is an $\cO$-algebra and $M$ is an $A$-module, see Subsection~\ref{ss:MO}. 

Theorem~\ref{thm:rect-alg} implies the main result of this section
Theorem~\ref{crl:str-mod} saying that the $\infty$-category of modules over an operad algebra can be described as the infinity category underlying the corresponding model category. The precise formulation is given in \ref{crl:str-mod}. The proof is based on the
result on localization of families of $\infty$-categories given in \cite{H.L}, Sect. 2. 

In his foundational book~\cite{L.HA} J.~Lurie gives another definition which, under some restrictions, yields for any $\cO$-algebra $A$ an $\cO$-monoidal category of $A$-modules. In Appendix B we show that our definition is equivalent to 
the one suggested by Lurie, with discarded $\cO$-monoidal structure.

\subsection{Classical setting}\label{ss:cl-mod}

Let $\cO$ be a topological operad.
We define a new topological operad $\Mu\cO$ as follows. We double each color, defining
$$ [\Mu\cO]=[\cO]\times\{a,m\}.$$
A collection of colors in $\Mu\cO$ is given by a pair of maps
$\wt c=(c,X_c)$ where $c:I\to [\cO],\ X_c:I\to\{a,m\}$.

The space of operations $\Mu\cO(\wt c,\wt d)$ for $\wt c:I\to[\Mu\cO],\ 
\wt d\in[\Mu\cO]$
is nonempty in two cases described below.
\begin{itemize}
\item $X_c(i)=a$ for all $i\in I$, $X_d=a$. 
\item $X_c(i)=m$ for precisely one $i\in I$, $X_d=m$.
\end{itemize}
In these cases $\Mu\cO(\wt c,\wt d)=\cO(c,d)$.

Note that the same construction makes sense for operads enriched
over any SM category.

One has a map $\cO\to\Mu\cO$
carrying any $c\in[\cO]$ to $(c,a)$. In the opposite direction,
a map $\Mu\cO\to\cO$ erases the $X$-marking of a color.

Algebras over $\Mu\cO$ are pairs $(A,M)$ where $A$ is an $\cO$-algebra 
and $M$ is a $A$-module.

\subsection{}
\label{ss:MO}
A similar construction makes sense in the context of $\infty$-operads.
Given an $\infty$-operad $\cO^\otimes$ we define a new operad $\Mu\cO^\otimes$
by the formula
$$ \Mu\cO^\otimes=\CM^\otimes\times_{N\Fin_*}\cO^\otimes,$$
where $\CM$ is the two-color operad governing pairs $(A,M)$ with $A$
a commutative algebra and $M$ an $A$-module, and $\CM^\otimes$ is the 
corresponding $\infty$-operad.

\begin{dfn}\label{dfn:module}
Let $\cO^\otimes$ be an $\infty$-operad and $\cC^\otimes$
a SM $\infty$-category.
Let $A\in\Alg_{\cO}(\cC)$. The $\infty$-category
$\Mod_A^{\cO}(\cC)$ is defined as the fiber product
$$\Mod_A^{\cO}(\cC)=
\Alg_{\Mu\cO}(\cC)\times_{\Alg_{\cO}(\cC)}\{A\}
$$
\end{dfn}

Let $\cO$ be a topological operad and let $\cR\rTo C_*(\cO)$ be a homotopically sound replacement.
  
We are not sure that $\Mu\cR$ is always homotopically sound.
The following, however, is very easy. 

\begin{lem}\label{lem:ssplitmod}
Let $\cR$ be a $\Sigma$-split operad in $C(k)$. Then $\Mu\cR$ is also
$\Sigma$-split. If $\cR$ is $\Sigma$-cofibrant, then  $\Mu\cR$ is $\Sigma$-cofibrant. 
\end{lem}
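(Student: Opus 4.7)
The plan exploits the fact that $\Mu\cR$ is, away from its vanishing locus, just a relabelled copy of $\cR$. More precisely, $\Mu\cR(\wt c,\wt d)$ equals $\cR(c,d)$ whenever the markings admit an operation (either all colors are $a$, or exactly one input and the output are $m$-marked), and vanishes otherwise. The composition maps, the $\Sigma$-actions, and the indexing of $\Mu\cR^\Sigma$ by orderings of $I$ are all inherited from those of $\cR$ on the underlying colored collections, subject to the marking constraints.

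For the $\Sigma$-cofibrancy statement, the first observation is that the automorphism group $\Sigma_{\wt c}$ of a collection $\wt c=(c,X_c):I\to[\Mu\cR]$ is the subgroup of $\Sigma_c$ consisting of permutations that preserve the marking $X_c:I\to\{a,m\}$. Thus $\Sigma_{\wt c}\subseteq\Sigma_c$, and $k[\Sigma_c]$ is free as a $k[\Sigma_{\wt c}]$-module (decomposed over the left cosets). Consequently the restriction-of-scalars functor from $\Sigma_c$-complexes to $\Sigma_{\wt c}$-complexes carries cofibrant objects to cofibrant objects, and cofibrancy of $\Mu\cR(\wt c,\wt d)=\cR(c,d)$ as a $\Sigma_{\wt c}$-complex follows from cofibrancy of $\cR(c,d)$ as a $\Sigma_c$-complex.

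For the $\Sigma$-splitness statement, I would define $t^{\wt c,\wt d}_\theta:=t^{c,d}_\theta$ whenever $\Mu\cR(\wt c,\wt d)$ is non-zero, identifying the orderings of $I$ on the two sides. Axioms (SPL) and (INV) then reduce directly to the analogous axioms for $\cR$, since both the splitting and the action of bijections on $\Mu\cR(\wt c,\wt d)$ coincide with those on $\cR(c,d)$. For axiom (COM), each vertex and each arrow of the defining diagram for $\Mu\cR$, restricted to components with admissible markings, coincides with the corresponding vertex or arrow for $\cR$; inadmissible components vanish on both sides. Hence the (COM) diagram for $\Mu\cR$ is a subdiagram of that for $\cR$ and commutes by assumption.

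The one step that requires care, though I expect no serious difficulty, is tracing the markings through the (COM) diagram: composing an operation in $\Mu\cR(\wt d\sqcup\wt a',\wt e)$ with a tuple of operations $\Mu\cR(\wt c_j,\wt d(j))$ produces an element of $\Mu\cR(\wt c\sqcup\wt a,\wt e)$ only when the unique $m$-marked input of the result sits in a consistent position, and one must verify that each non-zero summand in the source of the $\Mu\cR$-diagram corresponds bijectively to a non-zero summand in the target. This is purely bookkeeping based on the observation that composition in $\Mu\cR$ preserves the number of $m$-marked inputs and outputs.
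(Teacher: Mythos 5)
Your argument matches the paper's own (very terse) proof in approach: the paper likewise observes that each nonzero component $\Mu\cR(\wt c,\wt d)$ is identified with $\cR(c,d)$, so that the maps $t_\theta$ carry over verbatim and give a $\Sigma$-splitting of $\Mu\cR$, while the cofibrancy claim is declared obvious. You simply spell out the details the paper leaves implicit --- in particular the observation that $\Sigma_{\wt c}\leq\Sigma_c$ together with freeness of $k[\Sigma_c]$ over $k[\Sigma_{\wt c}]$, and the bookkeeping of markings in axiom (COM) --- and these checks are correct.
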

\begin{proof}
If the collection of maps $t_\theta:\cR(c,d)\to \cR(c,d)$, 
$\theta:I\to\langle|I|\rangle$, provides a $\Sigma$-splitting for $\cR$, the same maps provide a $\Sigma$-splitting for nonzero components of $\Mu\cR$ in the notation of  \ref{ss:cl-mod} and \ref{ss:sigmasplit}. The second claim is also obvious.
\end{proof}

In any case, we can choose a homotopically sound replacement 
$$ \cM\rTo C_*(\Mu\cO)$$
and define $\cR$ as the full suboperad spanned by the original colors
of $\cO$. Then by \ref{thm:modelham} $\cR$ is a homotopicaly sound replacement of $C_*(\cO)$.

\begin{thm}\label{crl:str-mod}
Let $\cO$ be a topological operad. Let $\cM$ be a homotopically sound replacement
of $C_*(\Mu\cO)$  and let $\cR$ be the full suboperad of $\cM$ spanned by the original colors
of $\cO$. Let $A$ be an $\cR$-algebra in $C(k)$. 
There is a canonical equivalence
\begin{equation}\label{eq:mod}
N(\Mod^\cR_A(C(k)))\to\Mod_A^{\cO}(\QC(k)),
\end{equation}
where we denote by the same letter $A$ the corresponding $\cO$-algebra in $\QC(k)$.
\end{thm}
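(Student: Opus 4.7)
The strategy is to deduce (\ref{eq:mod}) from Theorem \ref{thm:rect-alg} applied twice—to the operad $\cR$ and to the operad $\cM$—and then take fibers over $A$. The operad $\cR$ is homotopically sound as noted just before the theorem statement, and $\cM$ is homotopically sound by hypothesis, so Theorem \ref{thm:rect-alg} produces two rectification equivalences,
$$\Phi_\cR\colon N(\Alg^\st_\cR(C(k))) \xrightarrow{\ \sim\ } \Alg_\cO(\QC(k)), \qquad \Phi_\cM\colon N(\Alg^\st_\cM(C(k))) \xrightarrow{\ \sim\ } \Alg_{\Mu\cO}(\QC(k)).$$

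Second, the inclusion of operads $\cR \hookrightarrow \cM$ induces a forgetful functor $G^\st\colon \Alg^\st_\cM(C(k)) \rTo \Alg^\st_\cR(C(k))$, which is right Quillen by the change-of-operad adjunction of Subsection \ref{ss:cho}; in the same way, the operad map $\cO \to \Mu\cO$ induces the $\infty$-categorical forgetful functor $G\colon \Alg_{\Mu\cO}(\QC(k)) \rTo \Alg_\cO(\QC(k))$ whose fiber over $A$ is, by Definition \ref{dfn:module}, precisely $\Mod^\cO_A(\QC(k))$. Since the construction of $\Phi$ in Subsection \ref{ss:phi} is manifestly natural with respect to maps of homotopically sound operads (it factors through the PROP constructions $P_\cR$ and $P_\cM$ and the induced maps of $P_\cO$-valued functors), the square with horizontal arrows $\Phi_\cM,\Phi_\cR$ and vertical arrows $NG^\st, G$ commutes up to canonical equivalence. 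Taking fibers over $A$ (after replacing $A$ by a cofibrant model, which affects neither side of (\ref{eq:mod}) up to equivalence) thus reduces the theorem to the identification of the left-hand fiber with $N(\Mod^\cR_A(C(k)))$.

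The main obstacle is this last identification, since a priori the nerve of a strict 1-categorical fiber need not model the homotopy fiber of the corresponding nerve. Here one invokes the localization-of-families formalism of \cite{H.L}, Sect. 2: the functor $G^\st$ preserves fibrations and trivial fibrations, and weak equivalences in both $\Alg^\st_\cM$ and $\Alg^\st_\cR$ are detected coordinatewise via the ordinary forgetful functors to $C(k)^{[\cM]}$ and $C(k)^{[\cR]}$ respectively, with $[\cR] \subset [\cM]$. These facts imply that $NG^\st$ is (equivalent to) a categorical fibration in the sense required by \cite{H.L}, Sect. 2, and that its $\infty$-fiber over the cofibrant representative of $A$ coincides with the nerve of the strict 1-categorical fiber $\Mod^\cR_A(C(k))$. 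Combining this with the commutative square of the previous paragraph yields the equivalence (\ref{eq:mod}); the delicate point, on which the proof rests, is verifying that the particular Quillen pair attached to $\cR \hookrightarrow \cM$ satisfies the hypotheses under which the fiber-comparison result of \cite{H.L} is applicable.
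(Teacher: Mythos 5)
Your proof sets up the correct commutative square coming from two applications of Theorem~\ref{thm:rect-alg} (to $\cR$ and to $\cM$), and correctly reduces the problem to comparing the homotopy fibers of the two vertical arrows, invoking the machinery of \cite{H.L}, Section~2, to identify the homotopy fiber of $NG^\st$ with the nerve of the (1-categorical) fiber of $\Alg^\st_\cM(C(k))^c\rTo\Alg^\st_\cR(C(k))^c$ over a cofibrant representative of $A$. (For the record, the paper applies Proposition 2.1.4 of \cite{H.L} after checking that this restricted functor is a \emph{marked cocartesian fibration}, not just a Quillen functor, using crucially that forgetting along $\cR\hookrightarrow\cM$ preserves cofibrant algebras; your phrasing is looser but in the right spirit.)

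However, there is a genuine gap at the very last identification. The strict 1-categorical fiber you end up with is the category $\Mod^\cM_A(C(k))$, i.e.\ the fiber of $\Alg^\st_\cM(C(k))\to\Alg^\st_\cR(C(k))$ over $A$. You assert that this is $\Mod^\cR_A(C(k))$, but that is only true when $\cM=\Mu\cR$, and in the generality of the theorem $\cM$ is merely \emph{some} homotopically sound replacement of $C_*(\Mu\cO)$ — the paper even remarks earlier that it is unclear whether $\Mu\cR$ is always homotopically sound, which is precisely why the theorem is stated for an arbitrary $\cM$ with $\cR\subset\cM$. Bridging $\Mod^\cM_A$ and $\Mod^{\Mu\cR}_A=\Mod^\cR_A$ is the substantive remaining work: the paper introduces a ``linearity'' condition on pairs $\cR\subset\cM$, describes $\Mod^\cM_A$ as representations of an enveloping dg category $U^\cM_\cR(A)$, shows (generalizing \cite{haha}, 5.3.3) that a quasiisomorphism of operads over $\cR$ induces a quasiequivalence of enveloping categories when $A$ is cofibrant, and then applies this to $\cM\to\Mu\cR$ to conclude $\Mod^\cM_A\simeq\Mod^\cR_A$ after localization. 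Without some argument of this kind, your proof establishes the equivalence $N(\Mod^\cM_A(C(k)))\simeq\Mod^\cO_A(\QC(k))$ but not the statement as written.
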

\begin{proof}
 According to Theorem~\ref{thm:rect-alg} one has a commutative diagram
of $\infty$-categories whose horizontal maps are equivalences.
\begin{equation}
\begin{diagram}
N(\Alg^\st_{\cM}(C(k))) & \rTo & \Alg_{\Mu\cO}(\QC(k)) \\
\dTo^{\phi^\st} & & \dTo^{\phi} \\
N(\Alg^\st_{\cR}(C(k))) & \rTo & \Alg_{\cO}(\QC(k))
\end{diagram}
\end{equation}
This yields an equivalence of the homotopy fibers of the vertical maps.
It remains to identify the map of homotopy fibers  with the
map (\ref{eq:mod}).

The forgetful functor 
\begin{equation}
\Alg^\st_{\cM}(C(k))\rTo \Alg^\st_{\cR}(C(k))
\end{equation}
inducing the left vertical arrow $\phi^\st$, preserves cofibrant algebras.
This map, restricted to the subcategories spanned by the cofibrant objects,
and considered as marked $\infty$-categories (quasiisomorphisms being 
the marked arrows), is a marked cocartesian fibration in the sense of
Definition 2.1.1,~\cite{H.L}: this is a cocartesian fibration of categories
(a map $f:A\to A'$ of algebras gives rise to base change map $f^*:\Mod_A\to\Mod_{A'}$), the base change preserves quasiisomorphisms of cofibrant modules,
and weak equivalence of cofibrant algebras gives rise to equivalence of
the corresponding categories of modules). 
Then, according to Proposition 2.1.4,~\cite{H.L}, the (homotopy) fibers of $\phi^\st$ 
identify with the DK localizations of the fiber of the functor
\begin{equation}
\phi^\st:\Alg^\st_\cM(C(k))^c\rTo\Alg^\st_\cR(C(k))^c
\end{equation}
at a cofibrant algebra $A$.

If we had $\cM=\Mu\cR$, the fiber would be precisely the category of cofibrant $A$-modules. In general one has to add a few lines.

\

We will now present a mini-theory, generalizing to algebras over
colored operads the notion of enveloping algebra.

Let $\cR$ be a full suboperad of an operad $\cM$ such that 
the following ``linearity'' condition holds.

{\sl
Let $c:I\to[\cM]$ and $d\in[\cM]$ satisfy the condition 
$\cM(c,d)\ne 0.$ Then either both $d$ and the image of $c$ belong to $[\cR]$, or $d\not\in[\cR]$ and there is precisely one $i\in I$ such that $c(i)\not\in[\cR]$.
 }
 Given a pair of operads $\cM\supset\cR$ satisfying the linearity condition, and an $\cR$-algebra $A$ in $C(k)$, we can look at the
fiber of the functor $\Alg_\cM(C(k))\to\Alg_\cR (C(k))$ at $A$
as the category of generalized $A$-modules. In case $\cM=\Mu\cR$
this fiber {\sl is} the category of $A$-modules.  Keeping in mind this
analogy, we will denote it $\Mod_A^{\cM}$. 

\

The category $\Mod_A^\cM$ can be easily described as the category of 
representations (that is, $C(k)$-enriched functors to $C(k)$) of a
category enriched over $C(k)$ which we will call {\sl the enveloping
category of $A$} and will denote $U^\cM_\cR(A)$. Its objects are the
 elements of $[\cM]-[\cR]$. The complex of maps from $x$ to $y$,
$x,y\in[\cM]-[\cR]$, is a certain colimit of tensor products of $\cM(c,d)$ along a category of marked trees.

In case $\cR$ is a colorless operad and $\cM=\Mu\cR$, we recover the
classical notion of universal enveloping algebra. In case $\cR$ is a full suboperad of two operads $\cM$ and $\cM'$, a map of operads
$f:\cM\to\cM'$ is said to be over $\cR$ if $f|_\cR=\id$. If $A$ is a cofibrant $\cR$-algebra and $f:\cM\to\cM'$ is a quasiisomorphism
of operads over $\cR$, the induced map
$$ U_\cR^\cM(A)\rTo U_\cR^{\cM'}(A)$$
is an equivalence of dg categories.
This can be checked precisely as in the colorless case, see~\cite{haha}, 5.3.3. 

We can now complete our proof applying the above claim to $\cR$, $\cM$
as in the theorem and $\cM'=\Mu\cR$.
\end{proof}

\appendix

\section{Symmetric monoidal adjunction}
\label{sec:SMA}

Let $F:C\rlarrows D:G$ be an adjoint pair of symmetric monoidal categories, so that $F$
is a symmetric monoidal functor. Then it is easy to see that $G$ is automatically lax symmetric monoidal. 

In this subsection we study the above phenomenon and its generalizations
in the context of $\infty$-categories.

\subsection{Fibrations in $\Cat_\infty$}

We are going to use the notions of left or cocartesian fibration applied to arrows of $\Cat_\infty$ rather than of $\sSet$ as in \cite{L.T}, 2.1 and 2.4. Here are the appropriate definitions.
 
A map $f:X\to Y$ in $\Cat_\infty$ is called a left fibration if the diagram below
defined by $f$
\begin{equation}
\begin{diagram}
X^{\Delta^1} & \rTo & Y^{\Delta^1} \\
\dTo & & \dTo \\
X^{\{0\}} & \rTo^f & Y^{\{0\}}
\end{diagram}
\end{equation}
is cartesian.  Equivalently, this means that a map $f$ is equivalent to one represented by a left fibration in $\sSet$.

Similarly, one defines a cocartesian fibration in $\Cat_\infty$ as a map equivalent
to one represented by a cocartesian fibration in $\sSet$. Let $X,\ Y$ be $\infty$-categories. A map $f:X\to Y$ in $\sSet$ represents a cocartesian fibration in $\Cat_\infty$ if it can be embedded into a homotopy commutative triangle of   
$\infty$-categories
\begin{equation}
\begin{diagram}
X && \rTo^h && Z\\
& \rdTo^f && \ldTo^g \\
&& Y &&
\end{diagram},
\end{equation}
where $g$ is a cocartesian fibration in $\sSet$ and $h$ is an equivalence of $\infty$-categories.

Given an $\infty$-category $C$, we denote as $\Coc_C$ the
subcategory of $(\Cat_\infty)_{/C}$ spanned by cocartesian fibrations,
with arrows preserving the cocartesian liftings. The category
$\LEFT_C$ is the full subcategory of $(\Cat_\infty)_{/C}$ spanned 
by the left fibrations. One also defines the $\infty$-category $\LEFT$ as the full
subcategory of $\Fun(\Delta^1,\Cat_\infty)$ spanned by the left fibrations.
The $\infty$-cateory $\LEFT_C$ is the fiber at $C$ of the cartesian fibration 
$e_1:\Fun(\Delta^1,\Cat_\infty)\to \Cat_\infty$ assigning to each arrow in $\Cat_\infty$ its target.

\subsection{SM Grothendieck construction}

Recall that for an $\infty$-category $C$ there is an equivalence
\begin{equation}\label{eq:coc}
\Coc_C\stackrel{\sim}{\rTo} \Fun(C,\Cat_\infty).
\end{equation}
In this subsection we will describe symmetric monoidal versions of this correspondence.

A map of SM $\infty$-categories is called SM cocartesian fibration,
see \cite{L.HA}, 2.1.2.13,
if it is presented by a cocartesian fibration of the corresponding $\infty$-categories over $N\Fin_*$.
In the following proposition $\Coc^\SM_{C^\otimes}$ denotes the category of SM cocartesian fibrations over $C^\otimes$.
 
\begin{prp}
Let $C^\otimes$ be a SM $\infty$-category. There is an equivalence 
\begin{equation}
\Coc^\SM_{C^\otimes}\stackrel{\sim}{\rTo} \Fun^\lax(C^\otimes,\Cat_\infty),
\end{equation}
compatible with the equivalence (\ref{eq:coc}).
\end{prp}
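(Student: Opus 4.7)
The plan is to derive the proposition from the standard straightening--unstraightening equivalence (\ref{eq:coc}) by showing that, under this equivalence, $\Coc^\SM_{C^\otimes}\subset\Coc_{C^\otimes}$ corresponds to the full subcategory $\Fun^\lax(C^\otimes,\Cat_\infty)\subset\Fun(C^\otimes,\Cat_\infty)$ spanned by the lax SM (i.e.\ Segal-condition-preserving) functors. The fullness claim on the left is straightforward: a morphism between two SM cocartesian fibrations over $C^\otimes$ in the ambient category $\Coc_{C^\otimes}$ is a cocartesian functor over $C^\otimes$, and such a functor automatically preserves cocartesian lifts over $N\Fin_*$, since an edge of $D^\otimes$ is cocartesian over $N\Fin_*$ exactly when it is cocartesian over $C^\otimes$ and its image in $C^\otimes$ is cocartesian over $N\Fin_*$. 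Granting this, the compatibility assertion with (\ref{eq:coc}) follows tautologically by restricting the outer equivalence.

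The main step is thus the object-level identification. Let $p:D^\otimes\rTo C^\otimes$ be a cocartesian fibration classified by $F:C^\otimes\rTo\Cat_\infty$ under (\ref{eq:coc}). The composite $q:D^\otimes\rTo N\Fin_*$ is automatically a cocartesian fibration, and $p$ belongs to $\Coc^\SM_{C^\otimes}$ precisely when $q$ satisfies the Segal condition, i.e.\ when for every $\langle n\rangle\in N\Fin_*$ the functor
$$D^\otimes_{\langle n\rangle}\rTo D^n$$
induced by the $n$ inert projections $\rho^i:\langle n\rangle\rTo\langle 1\rangle$ is an equivalence. By compatibility of straightening with base change (\cite{L.T}, 3.2.1), the fiber $D^\otimes_{\langle n\rangle}$ is classified as a cocartesian fibration over $C^\otimes_{\langle n\rangle}\simeq C^n$ by the restriction of $F$ to $C^\otimes_{\langle n\rangle}$. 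The product $D^n\rTo C^n$, being the fibered product of cocartesian fibrations, is classified by the external product $(X_1,\ldots,X_n)\mapsto F(X_1)\times\cdots\times F(X_n)$. Hence the comparison functor on fibers unstraightens to the natural transformation
$$F(X)\rTo\prod_{i=1}^n F((\rho^i)_!X),$$
where $(\rho^i)_!X\in C$ denotes the cocartesian lift of $\rho^i$ in $C^\otimes$. This natural transformation is an equivalence for all $X\in C^\otimes_{\langle n\rangle}$ and all $n$ precisely when $F$ is a lax SM functor in the above sense, which is exactly the condition cutting out $\Fun^\lax(C^\otimes,\Cat_\infty)$.

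The principal technical obstacle is to make the two straightening identities invoked above --- compatibility with restriction of base along $C^\otimes_{\langle n\rangle}\hookrightarrow C^\otimes$, and the description of the straightening of a fiberwise product of cocartesian fibrations as the external product of their classifying functors --- precise at the level of $\infty$-categories rather than merely on homotopy categories. Both are consequences of the homotopy-coherent naturality of Lurie's straightening equivalence (\cite{L.T}, 3.2.1.4 and the surrounding material), but invoking them cleanly requires some formal bookkeeping with marked simplicial sets. Once these two identifications are in hand, restricting (\ref{eq:coc}) to the identified full subcategories yields the required equivalence together with the claimed compatibility.
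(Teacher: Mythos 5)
Your proof is correct in outline and targets exactly the content the paper's proof invokes. The paper notes that cartesian closedness of $\Cat_\infty$ identifies $\Fun^\lax(C^\otimes,\Cat_\infty)$ with the lax cartesian structures of \cite{L.HA}, 2.4.1.1 (functors $F:C^\otimes\to\Cat_\infty$ satisfying the Segal product condition), and then simply cites \cite{L.HA}, 2.1.2.12 for the equivalence between such structures and cocartesian fibrations of $\infty$-operads over $C^\otimes$. You instead re-derive that equivalence directly from straightening, which is a reasonable alternative, and your central insight --- that the Segal condition on $D^\otimes$ over $N\Fin_*$ unstraightens to the lax cartesian condition on the classifying functor, via base change along $C^\otimes_{\langle n\rangle}\hookrightarrow C^\otimes$ and the fiberwise-product identification --- is exactly what sits behind the cited result. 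Two points to sharpen. First, your characterization of $(q\circ p)$-cocartesian edges (those that are $p$-cocartesian with $q$-cocartesian image in $C^\otimes$) is correct, but the forward direction --- that a $(q\circ p)$-cocartesian edge has $q$-cocartesian image --- is not immediate from HTT 2.4.1.3(1) and needs a separate uniqueness-of-cocartesian-lifts argument; you should record this before using the characterization to prove fullness of $\Coc^\SM_{C^\otimes}\subset\Coc_{C^\otimes}$. Second, as you yourself flag, the naturality of straightening you invoke (base change and external products) requires careful marked-simplicial-set bookkeeping; this is precisely what the paper's citation already packages, so your more explicit route buys transparency about where the Segal condition enters at the cost of re-proving a theorem the paper is content to quote.
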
 
\begin{proof}
Since $\Cat_\infty$ is cartesian closed, the right hand side identifies with the
full subcategory of $\Fun(C^\otimes,\Cat_\infty)$ spanned by the functors $F:C^\otimes\rTo\Cat_\infty$  which are {\sl lax cartesian structures} in the sense of \cite{L.HA}, 2.4.1.1: any object $X=X_1\oplus\ldots\oplus X_n$ with $X_i\in C$ exhibits 
$F(X)$ as the product of $F(X_i)$.

Now the claim follows from the equivalence of two definitions of operad cocartesian fibration, see~\cite{L.HA}, 2.1.2.12.
\end{proof}

The following result is an immediate consequence of the above.
\begin{crl}\label{crl:lefteq}
Let $C^\otimes$ be a SM $\infty$-category. There is an equivalence 
\begin{equation}
\LEFT^\SM_{C^\otimes}\stackrel{\sim}{\rTo}  \Fun^\lax(C^\otimes,\cS)
\end{equation}
between the $\infty$-category of left fibrations $M^\otimes\rTo C^\otimes$
{\sl which are SM functors} and lax SM functors $C^\otimes\rTo\cS$,
compatible with the Grothendieck construction.
\end{crl}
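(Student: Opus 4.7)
The plan is to deduce the corollary directly from the preceding proposition by cutting down both sides of the equivalence
\[
\Coc^\SM_{C^\otimes} \stackrel{\sim}{\rTo} \Fun^\lax(C^\otimes,\Cat_\infty)
\]
to the appropriate full subcategories. Recall that $\cS \subset \Cat_\infty$ is the full subcategory spanned by $\infty$-groupoids, and that under the unstraightening equivalence a cocartesian fibration of $\infty$-categories corresponds to a left fibration if and only if each of its fibers is an $\infty$-groupoid, equivalently if and only if the associated functor factors through $\cS \hookrightarrow \Cat_\infty$.

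First I would observe that a SM cocartesian fibration $p:M^\otimes \to C^\otimes$ is (presented by) a left fibration of the underlying simplicial sets precisely when every fiber $M^\otimes_X$ (for $X \in C^\otimes$) is a Kan complex. Since the underlying fibration over $N\Fin_*$ encodes the fibers $M^\otimes_X$ already at the level of objects of $C^\otimes$, this condition is testable on objects and is preserved by the Grothendieck correspondence. Hence $\LEFT^\SM_{C^\otimes}$ identifies with the full subcategory of $\Coc^\SM_{C^\otimes}$ consisting of those SM cocartesian fibrations whose fibers over every object of $C^\otimes$ lie in $\cS$.

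Next I would transport this condition across the equivalence of the proposition. Under $\Coc^\SM_{C^\otimes} \simeq \Fun^\lax(C^\otimes,\Cat_\infty)$, the fiber of $p$ at $X \in C^\otimes$ corresponds, up to equivalence, to the value $F(X)$ of the associated lax SM functor $F$. Therefore the subcategory of SM left fibrations matches precisely the full subcategory of $\Fun^\lax(C^\otimes,\Cat_\infty)$ spanned by those lax SM functors that factor through $\cS \subset \Cat_\infty$, i.e.\ $\Fun^\lax(C^\otimes,\cS)$. Restricting the equivalence gives the desired equivalence $\LEFT^\SM_{C^\otimes} \stackrel{\sim}{\rTo} \Fun^\lax(C^\otimes,\cS)$; compatibility with the ordinary Grothendieck construction is automatic since we are just restricting the previously compatible equivalence.

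The only mildly delicate point is the first step, namely making precise that the SM structure on $M^\otimes$ interacts harmlessly with the Kan-fiber condition, so that checking it fiberwise over $C^\otimes$ (rather than over $N\Fin_*$) is legitimate. I expect this to follow straightforwardly from the fact that $M^\otimes \to N\Fin_*$ factors as $M^\otimes \to C^\otimes \to N\Fin_*$ with $C^\otimes \to N\Fin_*$ itself a (SM) cocartesian fibration, so that the fibers of $M^\otimes \to N\Fin_*$ decompose as products of fibers of $M^\otimes \to C^\otimes$, which are Kan iff each factor is.
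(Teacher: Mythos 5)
Your proposal takes essentially the same route as the paper, which disposes of the corollary in one line as ``an immediate consequence of the above'': one simply restricts the equivalence $\Coc^\SM_{C^\otimes} \simeq \Fun^\lax(C^\otimes,\Cat_\infty)$ to the full subcategories determined on the left by the Kan-fiber condition characterizing left fibrations among cocartesian fibrations, and on the right by factoring through $\cS \subset \Cat_\infty$; these two full subcategories match because the Grothendieck correspondence exchanges fibers with values. One small inaccuracy in your final paragraph: the relevant decomposition is that, for $X = X_1\oplus\cdots\oplus X_n$ in $C^\otimes$, the fiber of $M^\otimes \to C^\otimes$ over $X$ is the product of the fibers over the $X_i\in C$ (this is the lax cartesian structure from the preceding proposition), not that the fibers of $M^\otimes\to N\Fin_*$ decompose as products of fibers of $M^\otimes\to C^\otimes$; the fibers over $N\Fin_*$ are full slices and are generally not Kan, so that phrasing is misleading, though it does not affect the correctness of the overall argument since the product decomposition you actually need is exactly what ensures that a lax cartesian structure $C^\otimes\to\Cat_\infty$ factors through $\cS$ on all of $C^\otimes$ as soon as its restriction to $C$ does.
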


The $\infty$-category $\LEFT^\SM_{C^\otimes}$ has a simple interpretation in terms of $\LEFT$.
The latter $\infty$-category has a cartesian SM structure. One has
\begin{lem}\label{lem:sm-left}
Let $C^\otimes$ be a SM $\infty$-category. The obvious functor
\begin{equation}
\LEFT^\SM_{C^\otimes}\rTo \Alg_\Com(\LEFT)
\end{equation}
identifies the left-hand side with the fiber of the forgetful functor
\begin{equation*}
\Alg_\Com(\LEFT)\rTo\Cat^\SM_\infty
\end{equation*}
at $C^\otimes$.
\end{lem}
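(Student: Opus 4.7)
The plan is to decompose the statement into two pieces: first, a general fact about commutative algebras in a cartesian $\infty$-category of arrows, and second, a straightforward observation that $\LEFT$ is closed under products inside $\Fun(\Delta^1,\Cat_\infty)$.

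First I would note that a finite product of left fibrations is again a left fibration, so $\LEFT\subset \Fun(\Delta^1,\Cat_\infty)$ is a full subcategory closed under finite products. Hence the cartesian SM structure on $\Fun(\Delta^1,\Cat_\infty)$ restricts to a cartesian SM structure on $\LEFT$, and the inclusion induces a fully faithful embedding
$$\Alg_\Com(\LEFT)\hookrightarrow\Alg_\Com(\Fun(\Delta^1,\Cat_\infty))$$
whose essential image consists of those commutative algebras whose underlying arrow in $\Cat_\infty$ is a left fibration.

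Next I would invoke the canonical equivalence (a special case of \cite{L.HA}, 3.2.4.3 for cartesian products, or just the exponentiability of $\Delta^1$)
$$\Alg_\Com(\Fun(\Delta^1,\Cat_\infty))\simeq\Fun(\Delta^1,\Alg_\Com(\Cat_\infty))=\Fun(\Delta^1,\Cat^\SM_\infty),$$
under which an object corresponds to an arrow $p\colon M^\otimes\to C^\otimes$ in $\Cat^\SM_\infty$, i.e.~a SM functor between SM $\infty$-categories. Under this equivalence, the forgetful functor $\Alg_\Com(\LEFT)\to\Cat^\SM_\infty$ induced by the target-evaluation $e_1\colon\Fun(\Delta^1,\Cat_\infty)\to\Cat_\infty$ is identified with the target-evaluation $e_1\colon\Fun(\Delta^1,\Cat^\SM_\infty)\to\Cat^\SM_\infty$. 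Its fiber at $C^\otimes$ is therefore the $\infty$-category of SM functors $p\colon M^\otimes\to C^\otimes$ whose underlying functor in $\Cat_\infty$ is a left fibration, which is by definition $\LEFT^\SM_{C^\otimes}$. Combined with the previous step, the fiber of $\Alg_\Com(\LEFT)\to\Cat^\SM_\infty$ at $C^\otimes$ is exactly $\LEFT^\SM_{C^\otimes}$.

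Finally, I would verify that the comparison functor of the statement agrees with the composite constructed in the two paragraphs above. This is formal: an object of $\LEFT^\SM_{C^\otimes}$ comes equipped with a SM structure on its source and a SM-functor structure on the map to $C^\otimes$, which is precisely the data needed to exhibit the underlying arrow in $\LEFT$ as a commutative algebra under the identification above.

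The main obstacle is strictly bookkeeping rather than substance: one must be careful that the cartesian SM structure on $\LEFT$ agrees, under the embedding into $\Fun(\Delta^1,\Cat_\infty)$, with the product cartesian structure, and that the equivalence $\Alg_\Com(\Fun(\Delta^1,\Cat_\infty))\simeq \Fun(\Delta^1,\Cat^\SM_\infty)$ is compatible with target evaluation. Both are instances of the fact that for a cartesian SM $\infty$-category $D$ and any simplicial set $K$, the functor $\Alg_\Com(\Fun(K,D))\to\Fun(K,\Alg_\Com(D))$ is an equivalence and natural in $K$; applying this to $K=\Delta^1,\{1\}$ and the obvious map between them gives the required naturality.
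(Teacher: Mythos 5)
Your outer framework is correct and is implicitly the paper's as well: commutative algebras in $\Fun(\Delta^1,\Cat_\infty)$ with its pointwise cartesian structure are the same as arrows in $\Cat^\SM_\infty$, full-faithfulness of $\Alg_\Com(\LEFT)\hookrightarrow\Alg_\Com(\Fun(\Delta^1,\Cat_\infty))$ follows from closure of $\LEFT$ under products, and the fiber of the forgetful functor at $C^\otimes$ is then the $\infty$-category of SM functors $p\colon M^\otimes\rTo C^\otimes$ whose \emph{underlying} functor $M\to C$ (the restriction to fibers over $\langle 1\rangle\in N\Fin_*$) is a left fibration.

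The gap is the very last step, where you declare that this ``is by definition $\LEFT^\SM_{C^\otimes}$.'' It is not: by Corollary~\ref{crl:lefteq}, $\LEFT^\SM_{C^\otimes}$ is the $\infty$-category of SM functors $p\colon M^\otimes\rTo C^\otimes$ such that $p$ \emph{itself} is a left fibration of $\infty$-categories, not merely the restriction $M\to C$. One implication is a pullback and hence trivial ($p$ a left fibration $\Rightarrow$ $M\to C$ a left fibration), but the converse --- that for a SM functor $p$ between SM $\infty$-categories, $M\to C$ being a left fibration forces $p$ to be one --- is exactly the content of the lemma and is what the paper's proof establishes. The paper's argument: since $p$ is a map over $N\Fin_*$ compatible with the cocartesian fibration structures, and since its restriction to each fiber $M^\otimes_{\langle n\rangle}\simeq M^n\to C^n$ is a left fibration, Proposition 2.4.2.11 of \cite{L.T} shows $p$ is a locally cocartesian fibration and describes the locally cocartesian edges as those $\alpha\colon d\to d'$ factoring as a $\pi_M$-cocartesian edge $\beta$ (for $\pi_M\colon M^\otimes\to N\Fin_*$) followed by an edge $\gamma$ over an identity of $N\Fin_*$; because the fiberwise maps are left fibrations, every edge in $M^\otimes$ is of this form, so $p$ is a left fibration. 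Without this argument you have only shown the fiber of $\Alg_\Com(\LEFT)\to\Cat^\SM_\infty$ is an a priori \emph{larger} category than $\LEFT^\SM_{C^\otimes}$.
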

\begin{proof}
We have to check that if a SM functor $f:D^\otimes\rTo C^\otimes$ induces
a left fibration $D\rTo C$ of the respective $\infty$-categories, then $f$ itself is a left fibration. Applying Proposition 2.4.2.11 of \cite{L.T}, we get that $f$ is a locally cocartesian fibration. Moreover, the same proposition gives a description of locally cocartesian arrows:
these are arrows $\alpha:d\rTo d'$ embeddable into a commutative triangle
\begin{equation}
\begin{diagram}
d & & \rTo^\alpha & & d' \\
& \rdTo^\beta & & \ruTo^\gamma & \\
&& d^{\prime\prime} &&
\end{diagram},
\end{equation}
where $\beta$ is $\pi_D$-cocartesian for $\pi:D^\otimes\rTo N\Fin_*$, and $\pi(\gamma)=\id$.
Obviously, these are {\sl all} arrows in $D^\otimes$. Therefore, $f$ is a left fibration.
\end{proof}

\subsection{SM pairings}

A pairing of $\infty$-categories is a pair of maps $C\lTo M\rTo D$, such that
the induced map $M\rTo C\times D$ is a left fibration
\footnote{Note: Lurie \cite{L.X}, 3.1 and 4.2, uses right fibrations instead.}. 

The $\infty$-category of pairings,
$\Pair$, is defined as the full subcategory of the category $\Fun(\Lambda^2_0,\Cat_\infty)$,
spanned by the diagrams giving rise to a left fibration.

Equivalently, $\Pair$ can be defined by a cartesian diagram
\begin{equation}
\begin{diagram}
\Pair & \rTo  & \LEFT \\
\dTo && \dTo \\
\Cat_\infty\times\Cat_\infty & \rTo^\times & \Cat_\infty
\end{diagram}.
\end{equation}

A pairing is uniquely defined, up to a usual ambiguity, by a corresponding functor to the category of spaces $C\times D\rTo\cS$.

The forgetful functor $\Pair\rTo \Cat_\infty\times\Cat_\infty$ induces a functor
$$\Alg_\Com(\Pair)\rTo\Cat^\SM_\infty\times\Cat^\SM_\infty.$$


For a pair $(C,D)$ of SM $\infty$-categories we denote as $\Alg_\Com(\Pair)_{(C,D)}$ the fiber
of this functor at $(C,D)$.

According to Corollary~\ref{crl:lefteq} and Lemma~\ref{lem:sm-left}, one has an equivalence 
\begin{equation}\label{eq:compairs}
 \Alg_\Com(\Pair)_{(C,D)}\stackrel{\sim}{\rTo}\Fun^\lax(C\times D,\cS).
\end{equation}

\subsection{Representability}

A pairing $(p,q):M\rTo C\times D$ is called {\sl left-representable} if for any $x\in C$ the fiber $p^{-1}(x)$ has an initial object.  

We define $\Pair^\ell$ as the full subcategory of $\Pair$ spanned
by the left-representable pairings
\footnote{This differs from the category of left-representable pairings
considered in \cite{L.X}, 4.2.7 where the arrows are required to be left-representable.}. 

A left pairing $p:M\to C\times D$ corresponds to a functor $C\times D\rTo\cS$
which can be equivalently converted into a functor $\wt p:C\rTo P(D^\op)$.
\footnote{Here $P(X)$ denotes the presehaves on $X$.}
Then $p$ is left representable iff $\wt p$ factors through Yoneda embedding
$D^\op\rTo P(D^\op)$. More precisely, one has the following.

\begin{lem}\label{lem:leftpair}
The equivalence 
$$\Pair_{(C,D)}\stackrel{\sim}{\rTo}\Fun(C\times D,\cS)$$
identifies the full subcategory $\Pair^\ell_{(C,D)}$ on the left with $\Fun(C,D^\op)$
on the right.
\end{lem}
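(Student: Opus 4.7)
The plan is to chase the Grothendieck equivalence on both sides and reduce the claim to Yoneda.  Under the equivalence $\Pair_{(C,D)}\simeq\Fun(C\times D,\cS)$, a pairing $(p,q):M\to C\times D$ corresponds to a functor $F:C\times D\to\cS$ whose total space is $M$; by currying, this is the same as a functor $\wt p:C\to\Fun(D,\cS)=P(D^\op)$.  The fiber $p^{-1}(x)\to D$ is again a left fibration (left fibrations are stable under base change), and under the Grothendieck equivalence $\LEFT_D\simeq\Fun(D,\cS)$ it corresponds precisely to $\wt p(x):D\to\cS$.

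The first key step will be to show that a left fibration $N\to D$ corresponding to a functor $G:D\to\cS$ admits an initial object in its total space if and only if $G$ is corepresentable, i.e.\ lies in the essential image of the (co-)Yoneda embedding $j:D^\op\hookrightarrow P(D^\op)$.  One direction is just the $\infty$-categorical Yoneda lemma: if $G\simeq\Map_D(d_0,-)$, then the pair $(d_0,\id_{d_0})\in N$ is initial since
\[
\Map_N((d_0,\id_{d_0}),(d,x))\simeq\Map_D(d_0,d)\times_{G(d)}\{x\}
\]
is contractible by the universal property.  Conversely, an initial object $(d_0,x_0)\in N$ yields a natural transformation $\Map_D(d_0,-)\to G$ that is a pointwise equivalence, so $G$ is corepresented by $d_0$.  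This identifies the objects of $\Pair^\ell_{(C,D)}$ with functors $\wt p:C\to P(D^\op)$ whose image lies in $D^\op\subset P(D^\op)$.

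The second step will be to upgrade this pointwise statement to a full subcategory identification.  Since the Yoneda embedding $j:D^\op\to P(D^\op)$ is fully faithful, postcomposition $j_*:\Fun(C,D^\op)\to\Fun(C,P(D^\op))$ is fully faithful with essential image the functors that factor pointwise through $j$ (this is standard, see \cite{L.T}, 5.1.3).  Combining with the previous paragraph, $j_*$ has essential image exactly the subcategory corresponding to $\Pair^\ell_{(C,D)}$ under the Grothendieck equivalence.  Hence $\Pair^\ell_{(C,D)}\simeq\Fun(C,D^\op)$.

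The main technical obstacle is the first step: rigorously identifying initial objects in the total space of a left fibration with corepresentability data for the classifying functor.  The cleanest approach is probably to use the straightening/unstraightening equivalence together with the $\infty$-categorical Yoneda lemma (\cite{L.T}, 5.1.3 and 5.5.2), writing $N$ as an unstraightening and checking the universal property of the proposed initial object directly via the formula for mapping spaces in an unstraightened left fibration.  The remaining verifications (stability of left fibrations under base change, compatibility of the straightening of $p^{-1}(x)\to D$ with the curried functor $\wt p(x)$, and fully faithfulness of postcomposition along $j$) are either explicitly in \cite{L.T} or follow from cartesian closedness of $\Cat_\infty$.
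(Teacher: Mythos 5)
Your proof is correct and takes essentially the same route as the paper: identify $\Fun(C\times D,\cS)\simeq\Fun(C,P(D^\op))$ by currying, observe that left-representability corresponds to factoring pointwise through the Yoneda embedding $j:D^\op\hookrightarrow P(D^\op)$, and use fully faithfulness of $j_*$ to conclude. The paper states the pointwise Yoneda characterization (your ``first key step'') in the paragraph just before the lemma and leaves it as routine, so its proof consists only of your ``second step''; you have simply spelled out the part the paper took for granted.
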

\begin{proof}
We have to verify that the natural map $\Fun(C,D^\op)\rTo\Fun(C\times D,\cS)$
is fully faithful. This is the composition of the equivalence
$$\Fun(C,P(D^\op))\rTo \Fun(C\times D,\cS)$$
with the map
$$\Fun(C,D^\op)\rTo\Fun(C,P(D^\op))$$
which is fully faithful by Yoneda lemma.
\end{proof}

$\Pair^\ell$ is closed under direct products.
Therefore, commutative algebras in $\Pair^\ell$ form a full subcategory of $\Alg_\Com(\Pair)$.

Our aim is to prove the following SM version of Lemma~\ref{lem:leftpair}.

\begin{prp}\label{prp:leftpair-SM}
The equivalence (\ref{eq:compairs})
$$\Alg_\Com(\Pair)_{(C,D)}\stackrel{\sim}{\rTo}\Fun^\lax(C\times D,\cS)$$
identifies the full subcategory $\Alg_\Com(\Pair^\ell)_{(C,D)}$ on the left with $\Fun^\lax(C,D^\op)$ on the right.
\end{prp}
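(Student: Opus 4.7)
My plan is to lift Lemma~\ref{lem:leftpair} to the symmetric monoidal setting by constructing an explicit comparison functor, verifying it lands in the subcategory of left-representable SM pairings, and identifying it as an equivalence onto that subcategory.

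First, I would observe that the Yoneda pairing $\mu_D: D^\op \times D \to \cS$, $(X,Y) \mapsto \Map_D(Y, X)$, carries a canonical lax SM structure: the coherence maps $\Map_D(Y_1, X_1) \times \Map_D(Y_2, X_2) \to \Map_D(Y_1 \otimes Y_2, X_1 \otimes X_2)$ are tensor product of morphisms in $D$. Composition with $\mu_D$ then defines a comparison functor
\[
\Phi: \Fun^\lax(C, D^\op) \longrightarrow \Fun^\lax(C \times D, \cS) \simeq \Alg_\Com(\Pair)_{(C,D)},
\]
the second equivalence being (\ref{eq:compairs}). For any lax SM $f: C \to D^\op$, the pairing $\Phi(f)$ sends $(x,y) \mapsto \Map_D(y, f(x))$, so the fibre of the associated left fibration over $x \in C$ has initial object $f(x)$; hence $\Phi$ factors through $\Alg_\Com(\Pair^\ell)_{(C,D)}$.

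Next, I would show $\Phi$ is fully faithful with essential image $\Alg_\Com(\Pair^\ell)_{(C,D)}$. Fully faithfulness reduces to the non-SM case: the underlying functor of $\Phi$ is the fully faithful embedding of Lemma~\ref{lem:leftpair}, and any lax SM upgrade is determined by a subspace of the total mapping space. For the essential image, the forgetful functor $\Alg_\Com(\Pair)_{(C,D)} \to \Pair_{(C,D)}$ sends $\Alg_\Com(\Pair^\ell)_{(C,D)}$ to $\Pair^\ell_{(C,D)}$ by construction, and an object of $\Alg_\Com(\Pair)_{(C,D)}$ belongs to $\Alg_\Com(\Pair^\ell)_{(C,D)}$ precisely when its underlying pairing is left-representable. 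So it suffices to show that every lax SM functor $F: C\times D \to \cS$ whose underlying functor is $\mu_D \circ (f \times \id_D)$ for some $f: C \to D^\op$ admits an essentially unique lax SM enhancement of $f$ inducing its given SM structure.

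This descent of lax SM structure is the main technical obstacle. Informally, the lax SM coherence on $F$ provides, for each $x_1, \ldots, x_n \in C$ and $y_1, \ldots, y_n \in D$, a map
\[
\prod_i \Map_D(y_i, f(x_i)) \longrightarrow \Map_D\bigl(\bigotimes_i y_i,\, f(\bigotimes_i x_i)\bigr),
\]
natural in all inputs; naturality in the $y_i$'s combined with the (strong) SM structure on $D$ forces this map, by Yoneda, to factor uniquely through a morphism $\bigotimes_i f(x_i) \to f(\bigotimes_i x_i)$ in $D$, i.e.\ an arrow in $D^\op$ furnishing the lax structure of $f$, with higher coherences matching likewise. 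To make this $\infty$-categorically precise I would encode both sides as mapping spaces in $\Op_\infty$ via the operadic unstraightening of \cite{L.HA}, Section~2.4, and verify that the space of lax SM enhancements of $f$ whose image under $\Phi$ is equivalent to a fixed $F$ is contractible.
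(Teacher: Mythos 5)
Your plan differs from the paper's and has a real gap at the step you yourself identify. The paper introduces the opposite cocartesian family $D^{\circ\otimes}\to N\Fin_*$ and first proves the tautological formula~(\ref{eq:funrel}), $\Fun_S(T,C^\circ)=\Fun^\ell(T\times_SC,\cS)$, for an arbitrary cocartesian fibration over an arbitrary base $S$. Specializing to $S=N\Fin_*$, $T=C^\otimes$ and the family $D^\otimes$ immediately yields the equivalence; what remains is just the pointwise observation that, under this identification, lax SM functors $C^\otimes\to D^{\circ\otimes}$ over $N\Fin_*$ correspond to left-representable lax cartesian structures. Your plan instead constructs an explicit comparison functor $\Phi$ using the Yoneda pairing $\mu_D$ and proposes to verify full-faithfulness and essential surjectivity directly.

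The gap is in essential surjectivity: you need to show that a lax SM structure on $\mu_D\circ(f\times\id_D)$ descends, essentially uniquely, to a lax SM structure on $f:C\to D^\op$. The Yoneda sketch only yields an $n$-ary coherence morphism $\bigotimes_i f(x_i)\to f(\bigotimes_i x_i)$ for each arity in isolation, but a lax SM structure on $f$ is a map of $\infty$-operads $C^\otimes\to D^{\op\otimes}$ over $N\Fin_*$ --- a fully coherent simplicial datum, not a compatible family of $n$-ary maps constructed one arity at a time. Transporting the operadic coherence of $F$ to a coherence of $f$, and then showing this transport is inverse to $\Phi$, is precisely the content of the proposition; ``encode both sides by operadic unstraightening and check contractibility'' restates the goal rather than proving it. The paper's route is engineered to avoid this descent: because formula~(\ref{eq:funrel}) is established functorially in the base $S$, applying it with $S=N\Fin_*$ carries the lax coherence on both sides at once, leaving only a matching of full subcategories. (A smaller issue: your $\mu_D$ has the wrong variance. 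For $\mu_D\circ(f\times\id_D)$ to be a left-representable left fibration as in Lemma~\ref{lem:leftpair} you want $\mu_D(X,Y)=\Map_D(X,Y)$, giving $(x,y)\mapsto\Map_D(f(x),y)$; as written, $\Map_D(Y,X)$ is contravariant in $Y$ and does not produce a left fibration over $C\times D$. Correctable, but a sign that the sketch needs more care.)
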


The proof is given in \ref{sss:op}--\ref{sss:prppf} below.

\subsubsection{Opposite $S$-family}
\label{sss:op}

Given a cocartesian fibration $p:C\rTo S$ corresponding to 
$p':C\rTo\Cat_\infty$, we define a cocartesian fibration 
$p^\circ:C^\circ\rTo S$
as the one corresponding to the composition of $p'$ with the functor 
$X\mapsto X^\op$.

More explicitly, if $p$ is presented by a cocartesian fibration $C\rTo S$ in $\sSet$,
the cocartesian fibration $C^\circ\rTo S$ is defined by the formula
\begin{multline}\label{eq:opc-hom}
 \Hom_S(T,C^\circ)= \Hom^\ell(T\times_SC,\cS):=\\
 \{f\in\Hom(T\times_SC,\cS)|\forall t\in T\ f_t:\{t\}\times_SC\to\cS\textrm{ is corepresentable }\}.
\end{multline} 

Note that the formula~(\ref{eq:opc-hom}) yields immediately
 
\begin{equation}\label{eq:funrel}
\Fun_S(T,C^\circ)=\Fun^\ell(T\times_SC,\cS),
\end{equation} 
where $\Fun^\ell(T\times_SC,\cS)$ is defined as
$$ \Fun^\ell(T\times_SC,\cS)_n=\Hom^\ell((T\times\Delta^n)\times_SC,\cS).$$
This is a full subcategory of $\Fun(T\times_SC,\cS)$ consisting of {\sl left-representable functors}.

\subsubsection{Proof of \ref{prp:leftpair-SM}}
\label{sss:prppf}

We will apply the formula (\ref{eq:funrel}) to $S:=N\Fin_*$,
$T:=C^\otimes$, $C:=D^\otimes$.
The right-hand side of (\ref{eq:funrel}) contains the
full subcategory 
$$\Fun^{\ell,\lax}(C^\otimes\times_{N\Fun_*}D^\otimes,\cS)$$
spanned by the left-representable lax cartesian structures
on $C^\otimes\times_{N\Fun_*}D^\otimes$ in the sense of \cite{L.HA}, 2.4.4.1.

We will now check that the corresponding full subcategory of 
$\Fun_{N\Fin_*}(C^\otimes,D^{\circ\otimes})$ coincides with 
$\Fun^\lax(C,D^\circ)$.

In fact, let $f:C^\otimes\times_{N\Fin_*}D^\otimes\rTo\cS$ be lax cartesian and left-representable. Recall that $f$ is lax cartesian if for $c=\oplus_{i=1}^n c_i,\ d=\oplus_{i=1}^nd_i$ the natural map
$f(c,d)\rTo\prod f(c_i,d_i)$ defined by the inert maps $\rho^i:\langle n\rangle\to\langle 1\rangle$, is an equivalence. In this case left representability can be checked at $\langle 1\rangle\in N\Fin_*$ only;
it will follow automatically for all $\langle n\rangle\in N\Fin_*$.

Left representability of $f$ yields a map $\wt f:C^\otimes\rTo D^{\circ\otimes}$ over $N\Fin_*$. If $c=\otimes c_i$ and $d=\wt f(c)$, we have immediately
$d=\oplus\wt f(c_i)$ which is equivalent to preservation of inert arrows.

This proves Proposition \ref{prp:leftpair-SM}.

\subsection{A generalization}
\label{ss:conclusion}

Proposition~\ref{prp:leftpair-SM} has an obvious (and important) generalization 
to multi-variable adjunction.

Let $K$ be a subset of $\{1,\ldots,n\}$.

A functor $F:C_1\times\ldots\times C_n\rTo\cS$ will be called $K$-representable if
the corresponding functor $F':\prod_{i\in K}C_i\rTo P(\prod_{i\not\in K}C_i^\op)$
factors through the fully faithful embedding 
$$\prod_{i\not\in K}C_i^\op\rTo P(\prod_{i\not\in K}C_i^\op).$$

Let now $C_i$ be symmetric monoidal.
The claim below directly follows from Proposition~\ref{prp:leftpair-SM}.

\begin{crl}\label{crl:laxadjunction}
There is an equivalence between the following $\infty$-categories.
\begin{itemize}
\item[1.] $\Fun^\lax(\prod_{i\in K}C_i,\prod_{i\not\in K}C_i^\circ)$,
\item[2.] The full subcategory of $\Fun^\lax(\prod_{i=1}^nC_i,\cS)$ spanned by 
the lax functors which are $K$-representable, once the SM structure is discarded.
\end{itemize}
\end{crl}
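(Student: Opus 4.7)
The plan is to deduce Corollary~\ref{crl:laxadjunction} from Proposition~\ref{prp:leftpair-SM} by grouping the indices. Set
\begin{equation*}
C := \prod_{i \in K} C_i, \qquad D := \prod_{i \notin K} C_i,
\end{equation*}
each equipped with the product SM structure (i.e., the product in $\Cat^\SM_\infty$). Then $C \times D \simeq \prod_{i=1}^n C_i$ as SM $\infty$-categories, and the equivalence~(\ref{eq:compairs}) specialises to
\begin{equation*}
\Alg_\Com(\Pair)_{(C,D)} \simeq \Fun^\lax\bigl(\textstyle\prod_{i=1}^n C_i,\ \cS\bigr).
\end{equation*}
Under this equivalence, a SM pairing $M \to C \times D$ lies in $\Pair^\ell$ iff for every $c \in C$ the functor $d \mapsto F(c,d): D \to \cS$ is representable (Lemma~\ref{lem:leftpair} applied to the underlying pairing). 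Unravelling $C$ and $D$ back into $\prod_{i \in K} C_i$ and $\prod_{i \notin K} C_i$, this is exactly the $K$-representability condition of~\ref{ss:conclusion}; so $\Alg_\Com(\Pair^\ell)_{(C,D)}$ is precisely the full subcategory described in item~(2).

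Next I would verify that the opposite $S$-family construction of~\ref{sss:op} commutes with fibre products over $N\Fin_*$. Under the Grothendieck correspondence, $^\circ$ is implemented by post-composition with the involution $(-)^\op : \Cat_\infty \to \Cat_\infty$, which preserves finite products; hence
\begin{equation*}
D^\circ \simeq \prod_{i \notin K} C_i^\circ
\end{equation*}
as SM $\infty$-categories. Proposition~\ref{prp:leftpair-SM} applied to the pair $(C,D)$ then identifies $\Alg_\Com(\Pair^\ell)_{(C,D)}$ with $\Fun^\lax(C, D^\circ) = \Fun^\lax\bigl(\prod_{i \in K} C_i,\ \prod_{i \notin K} C_i^\circ\bigr)$, which is item~(1). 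Composing the two identifications yields the equivalence claimed in the Corollary.

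The only point requiring genuine care is the compatibility of $^\circ$ with fibre products over $N\Fin_*$; all remaining verifications are formal. This compatibility can be extracted either from the universal characterisation~(\ref{eq:funrel}) of $C^\circ$, or from the fact that $(-)^\op$ is a self-equivalence of $\Cat_\infty$ and hence preserves all limits. No machinery beyond what was developed earlier in this Appendix is required.
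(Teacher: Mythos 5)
Your proposal is correct and matches the paper's intent: the paper simply states that the corollary ``directly follows from Proposition~\ref{prp:leftpair-SM},'' and your argument spells out exactly the grouping $C=\prod_{i\in K}C_i$, $D=\prod_{i\notin K}C_i$, the translation of $K$-representability into left-representability of the associated pairing, and the formal check that $(-)^\circ$ commutes with products over $N\Fin_*$. Nothing is missing; this is the same approach with the details filled in.
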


\begin{exm}\label{exm:1}
In particular, if a SM functor $f:C\rTo D$ admits a right adjoint as a functor between 
$\infty$-categories, its right adjoint has a canonical lax SM structure, see also
\cite{H.L}, 3.1.1.
\end{exm}

\begin{exm}\label{exm:2}
A symmetric monoidal functor $f:C\rTo D$ determines on $D$ a $C$-tensored structure, defined by a functor
$$ C\times D\rTo D, \quad (x,y)\mapsto f(x)\otimes y$$
which is also symmetric monoidal. This implies that, if $f$ induces also a $C$-cotensored
structure on $D$
$$ C^\op\times D\rTo D,$$
it is automatically lax symmetric monoidal.
\end{exm}


\section{Comparison of two notions of module}
\label{app:2mod}

\subsection{}
In this appendix we assume that the operad $\cO^\otimes$ is unital
(see \cite{L.HA}, 2.3.1), that is that the space $\Map(\emptyset,x)$
is contractible for any $x\in\cO$. Here $\emptyset$ belongs to the contractible space $\cO^\otimes_{\langle 0\rangle}$.

Denote $\cS_\cO$ the full subcategory of $\Fun(\Delta^1,\cO^\otimes)$
spanned by the semi-inert arrows (see~\cite{L.HA}, 3.3.1) $x\to y$ in $\cO^\otimes$ with $p(x)=\langle 1\rangle\in\Fin_*$
\footnote{In the notation of Lurie~\cite{L.HA}, 3.3.2.1, $\cS_\cO$ is the fiber of the composition $p\circ e_0:\cK_\cO^\otimes\to\cO^\otimes\to N\Fin_*$ at $\langle 1\rangle$.}.

The maps $s,t:\cS_\cO\rTo\cO^\otimes$ assign to an edge its source and its target, respectively.

\begin{lem}\label{lem:Scat}
The map $t:\cS_\cO\rTo\cO^\otimes$ is a categorical fibration. 
\end{lem}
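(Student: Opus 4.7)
The plan is to reduce the categorical fibration property to two ingredients: the standard fact that the target evaluation $e_1:\Fun(\Delta^1,\cO^\otimes)\rTo\cO^\otimes$ is a categorical fibration, and the observation that $\cS_\cO$ sits inside $\Fun(\Delta^1,\cO^\otimes)$ as a full subcategory closed under equivalence. Once these two things are known, the result is formal: a categorical fibration $Y\rTo Z$ between $\infty$-categories restricts to a categorical fibration on any full subcategory $X\subset Y$ that is closed under equivalence. Indeed, an inner horn $\Lambda^n_i\rTo X$ ($0<i<n$) already contains all vertices of $\Delta^n$, so any filler in $Y$ automatically has all vertices in $X$ and hence lies in $X$ by fullness; and an equivalence $e_1(\alpha)\simeq y'$ can be lifted to an equivalence $\alpha\simeq\alpha'$ in $\Fun(\Delta^1,\cO^\otimes)$ because $e_1$ is an isofibration, and then $\alpha'\in\cS_\cO$ by equivalence-stability.

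The nontrivial step is therefore the stability of $\cS_\cO$ under equivalence in $\Fun(\Delta^1,\cO^\otimes)$. An equivalence between $\alpha:x\to y$ and $\alpha':x'\to y'$ amounts to a square whose vertical arrows $x\to x'$ and $y\to y'$ are equivalences in $\cO^\otimes$. I would check the two defining conditions of $\cS_\cO$ separately. For the condition $p(x)=\langle 1\rangle$: the projection $p:\cO^\otimes\rTo N\Fin_*$ is a categorical fibration (part of the definition of an $\infty$-operad), so it sends equivalences to equivalences, and equivalences in $N\Fin_*$ are pointed bijections, which preserves cardinality. For the semi-inertness condition: the underlying map $p(\alpha)$ in $\Fin_*$ is determined up to isomorphism by $\alpha$ and is invariant under equivalence, so the injectivity-on-non-basepoint condition passes from $\alpha$ to $\alpha'$; and the second clause of the definition of semi-inert (an inert lifting on each non-basepoint coordinate of the source) is preserved because inert morphisms in an $\infty$-operad form a class closed under composition with equivalences.

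The main technical obstacle will be the second clause of semi-inert\-ness: one needs to make sure that the inert cocartesian lifts witnessing semi-inertness of $\alpha$ really do transport to inert cocartesian lifts witnessing semi-inertness of $\alpha'$, which is a small but genuine $2$-out-of-$3$ argument inside $\cO^\otimes$ using that equivalences are in particular inert. Apart from that, everything is a standard application of Lurie's cartesian-closedness of the Joyal model structure (to get that $e_1$ is a categorical fibration) plus the elementary restriction principle for categorical fibrations along equivalence-closed full subcategories outlined above.
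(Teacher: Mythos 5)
Your proof takes essentially the same route as the paper: factor $t$ through the inclusion $\cS_\cO\hookrightarrow\Fun(\Delta^1,\cO^\otimes)$ followed by $e_1$, cite that $e_1$ is a categorical fibration, and observe that the inclusion of a full subcategory closed under equivalence is itself a categorical fibration (the paper phrases the latter as fullness giving an inner fibration plus Joyal's criterion, HTT 2.4.6.5). The only difference is that you spell out the equivalence-stability of $\cS_\cO$ (invariance of $p(x)=\langle 1\rangle$ and of semi-inertness), which the paper treats as immediate; your verification is correct.
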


\begin{proof}
The map is the composition 
$\cS_\cO\to\Fun(\Delta^1,\cO^\otimes)
\rTo^{e_1}\cO^\otimes$.  The second map is a cartesian fibration by \cite{L.T}, 2.4.7.11 and 2.4.7.5. In particular, it is a categorical 
fibration. The first map is an embedding as a full subcategory,
so is an inner fibration. Now Joyal's criterion \cite{L.T}, 2.4.6.5
immediately shows it is also a categorical fibration.

\end{proof}

\ 

An edge $\alpha$ in  $\cS_\cO$ will be called inert if both $s(\alpha)$ and $t(\alpha)$ are inert edges in $\cO^\otimes$. Note that $s(\alpha)$ has to be an equivalence since it lives over an inert
endomorphism of $\langle 1\rangle\in\Fin_*$ which has to be identity. 

The assignment $\cO^\otimes\mapsto\cS_\cO$ is functorial.
One can identify $\cS_\Com$ with a subpreoperad of $\CM^{\otimes\natural}$
via the map
\begin{equation}
\iota:\cS_\Com\rTo\CM^{\otimes\natural}
\end{equation}
carrying an arrow $\alpha:\langle 1\rangle\to I_*$ to the 
characteristic function $h:I\to\{a,m\}$ of the image of $\alpha$: $h$ 
has value $m$ on the image and $a$ otherwise.

\

The commutative diagram
\begin{equation}
\begin{diagram}
\cS_\cO & \rTo & \cS_\Com & \rTo & \CM^\otimes \\
\dTo^t & & \dTo^t & & \dTo \\
\cO^\otimes & \rTo & N\Fin_* & \rEqual& N\Fin_*
\end{diagram}
\end{equation}
defines the maps 
\begin{equation}\label{eq:piiota}
\cS_\cO\rTo^\pi \cS_\Com\times_{N\Fin_*}\cO^\otimes\rTo^\iota \CM^\otimes
\times_{N\Fin_*}\cO^\otimes.
\end{equation}
 
The following result shows that Definition~\ref{dfn:module} of the category
of modules over an $\infty$-operad algebra is equivalent to the one given by 
Lurie in \cite{L.HA}, 3.3.3.8.

In particular, Corollary~\ref{crl:str-mod} is applicable to Lurie modules (with $\cO$-monoidal structure discarded).

\begin{prp}\label{prp:apprx}
Assume that $\cO^\otimes$ is unital and $\cO$ is Kan (this is so if $\cO^\otimes$ 
is coherent \cite{L.HA}, 3.3.1.9). Then the maps $\pi$ and $\iota$ 
in (\ref{eq:piiota}) are weak equivalences in $\Pop_\infty$.
\end{prp}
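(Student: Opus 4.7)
The plan is to show that both maps are operadic approximations in the sense of Lurie (HA 2.3.3.6), which are automatically weak equivalences in $\Pop_\infty$. Both maps fit over $\cO^\otimes \to N\Fin_*$, and since the target-evaluation projections $t$ are categorical fibrations (Lemma~\ref{lem:Scat}, and the analogous assertion for the two fiber products), a fiberwise analysis over $\cO^\otimes$ is available for $\pi$, while $\iota$ can be reduced to the case $\cO = \Com$ by pullback.

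For the map $\iota$: I would observe that $\iota$ is obtained by pullback along $\cO^\otimes \to N\Fin_*$ from $\iota_0 : \cS_\Com \to \CM^\otimes$. Pullback of preoperadic weak equivalences along a categorical fibration preserves weak equivalences, so it suffices to treat $\iota_0$. Both preoperads have colors (objects over $\langle 1 \rangle$) canonically identified with the two-element set $\{a,m\}$, and $\iota_0$ is a bijection on colors via $\alpha \mapsto h_\alpha$ (characteristic function). A direct enumeration shows that in $\CM^\otimes$, active multi-mapping spaces from $(c_1,\dots,c_n)$ to $d$ are contractible exactly when either $d = a$ and all $c_i = a$, or $d = m$ and exactly one $c_i = m$, and empty otherwise. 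The same count holds in $\cS_\Com$, and $\iota_0$ matches the unique active multi-morphism in each nonempty case. Hence $\iota_0$ is an equivalence of underlying $\infty$-operads.

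For the map $\pi$: I would verify the approximation condition fiberwise over $\cO^\otimes$. For $y \in \cO^\otimes_{I_*}$, the fiber of the target of $\pi$ over $y$ is the discrete set $\{\alpha : \langle 1\rangle \to I_*\}$, while the fiber of $\cS_\cO$ over $y$ is the subcategory of $(\cO^\otimes)_{/y}$ on semi-inert arrows $f : x \to y$ with $x \in \cO$. If $\alpha(1) = i_0 \in I$, the inert-at-$i_0$ requirement on $f$ forces $x \simeq y_{i_0}$ up to contractible choice, while the remaining components $y_j$ for $j \neq i_0$ receive $0$-ary operations whose spaces of choices are contractible by unitality. If $\alpha$ is the zero map, the morphism data $f$ is a tuple of $0$-ary operations into each $y_j$, which is contractible by unitality.

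The principal obstacle is the $\alpha = 0$ case: the source $x$ is unconstrained at the level of the underlying $\infty$-category, so the naive fiber of $\pi$ contains a copy of $\cO$ and $\pi$ is \emph{not} a categorical equivalence. The key observation is that in $\cS_\cO$, every morphism between two such objects $(x, f), (x', f')$ lying over the same $\alpha = 0$ and $y$ is inert in $\cS_\cO$ (being a morphism over an identity in $\Fin_*$ whose components are unit $0$-ary operations), so all these objects become identified in the fibrant replacement. The Kan assumption on $\cO$ guarantees that the resulting space of such identifications is contractible. Taken together, these fiberwise verifications yield the approximation condition of HA 2.3.3.6 for $\pi$, completing the proof.
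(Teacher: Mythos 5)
Your high-level strategy---reduce to showing that both maps are approximations in the sense of \cite{L.HA}, 2.3.3.6, and then invoke \cite{L.HA}, 2.3.3.23---matches the paper. The execution, however, has genuine gaps.

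For $\iota$, the reduction to $\iota_\Com\colon\cS_\Com\to\CM^\otimes$ by base change is correct (this is \cite{L.HA}, Remark 2.3.3.9), but your subsequent argument goes astray. Enumerating active multi-mapping spaces does not verify the approximation conditions, which are conditions about lifting inert morphisms forward and active morphisms backward; and the conclusion that ``$\iota_0$ is an equivalence of underlying $\infty$-operads'' is not even well-posed since $\cS_\Com$ is merely a preoperad, not an $\infty$-operad. What makes the claim work is that $\cS_\Com$ is a \emph{full} subcategory of $\CM^\otimes$ (spanned by objects with at most one marked vertex) and this subcategory is closed under targets of inert arrows and under sources of active arrows with target in $\cS_\Com$; this closure is precisely the approximation condition for a full subcategory.

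For $\pi$, the fiberwise analysis over $\cO^\otimes$ is the wrong framework: the approximation conditions are not fiberwise, since cartesian lifting of active morphisms connects different fibers. More seriously, your ``key observation'' is false. A morphism in $\cS_\cO$ between two null arrows $(x,0\colon x\to y)$ and $(x',0\colon x'\to y)$ consists (among other data) of a map $x\to x'$ in $\cO$, which can be \emph{any} unary operation, not just an equivalence; so such a morphism need not be inert, and the claim that ``all these objects become identified in the fibrant replacement'' has no basis---inert morphisms must be preserved, not collapsed. You correctly diagnose that $\pi$ is not a categorical equivalence because of the $\alpha=0$ fiber, but your proposed repair does not close this gap. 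The actual proof handles it by directly constructing $\iota\circ\pi$-cartesian lifts of active edges (treating $a=0$ and $a\ne 0$ separately, using unitality and the Kan condition on $\cO$ for the latter), and then verifying cartesianness via an explicit homotopy-cartesian-square computation of mapping spaces in $\Fun(\Delta^1,\cO^\otimes)$; this is preceded by a proof that $\iota\circ\pi$ is a categorical fibration, which your proposal also omits but which is needed to apply the approximation criterion.
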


The proof of the proposition is given in \ref{sss:start}--\ref{sss:end}.

We will prove that the $\iota$ and $\iota\circ\pi$  are weak equivalences in 
$\Pop_\infty$.

This will be done using Lurie's notion of {\sl approximation of operads.}
We check that both $\iota$ and $\iota\circ\pi$ are approximations
in the sense of \cite{L.HA}, 2.3.3.6.

Then, using \cite{L.HA},  2.3.3.23(1), we deduce that the maps $\iota$ and 
$\iota\circ\pi$ are weak equivalences.

\subsection{Proof of \ref{prp:apprx}}
 
\subsubsection{$\iota$ is an approximation} 
\label{sss:start}
The map $\iota$ is obtained from the embedding $\iota_\Com:\cS_\Com\to\CM^\otimes$
by a base change along fibration $p:\cO^\otimes\to N\Fin_*$. Therefore, by  
Remark 2.3.3.9 of \cite{L.HA}, in order to prove $\iota$
is an approximation, it suffices to check that the map $\iota_\Com$ 
is an approximation.
$\cS_\Com$ is a full subcategory of $\CM^\otimes$ spanned by the objects having 
at most one appearance of $m$.
Thus, if an arrow $\alpha:x\to y$ in $\CM^\otimes$ is inert and $x\in\cS_\Com$ 
then $y\in\cS_\Com$. Similarly, if $\alpha$ is active and $y\in\cS_\Com$ then 
$x\in\cS_\Com$. This implies $\iota_\Com$ is an approximation. 

\

We will now prove that $\iota\circ\pi$ is also an approximation.
This is done in \ref{sss:apprx-2-begin}--\ref{sss:apprx-2-end}.

\subsubsection{$\iota\circ\pi$ is a categorical fibration}
 \label{sss:apprx-2-begin}

Let $A\to B$ be a trivial cofibration of simplicial sets in the Joyal model 
structure.
Given a pair of compatible maps $a:A\to\cS_\cO$ and 
$b:B\to\CM^\otimes\times_{N\Fin_*}\cO^\otimes$, we have to find a lifting
$c:B\to\cS_\cO$ making two triangles commutative.

We proceed as follows. Look at the commutative square with vertical arrows
$A\to B$ and $\cS_\cO\to\cO^\otimes$, as shown in the diagram.
\begin{equation}\label{eq:ABSO}
\begin{diagram}
A & \rTo & \cS_\cO &&\\
\dTo &\ruDashto^c & \dTo_{\iota\circ\pi} & \rdTo^t &\\
B & \rTo & \CM^\otimes\times_{N\Fin_*}\cO^\otimes & \rTo & \cO^\otimes
\end{diagram}
\end{equation}
 
By Lemma~\ref{lem:Scat} there is a lifting $c:B\to \cS_\cO$ making two 
triangles $AB\cS_\cO$ and $B\cS_\cO\cO^\otimes$ commutative. 
We claim $c$ makes also commutative the triangle we need. In fact, the 
commutative diagram
\begin{equation}
\begin{diagram}
A & \rTo &\CM^\otimes \\
\dTo & & \dTo \\
B & \rTo & N\Fin_*
\end{diagram}
\end{equation}
has a unique lifting (as $\CM^\otimes$ and $N\Fin_*$ are both nerves of 
categories, and the functor between them admits lifting of isomorphisms).
This implies that the lifting $c$ automatically satisfies the required 
property.

\subsubsection{Property (1) of \cite{L.HA}, 2.3.3.6.}
 
If $a:x\to y$ is an object in $\cS_\cO$ and $\bar b:p(y)\to \bar z$ is an inert
arrow in $\Fin_*$, we can lift
$\bar b$ to an inert edge $b:y\to z$ in $\cO^\otimes$ (so that $p(b)=\bar b$). 
One has a triangle $x\rTo^ay\rTo^b z$ in $\cO^\otimes$ that determines an edge in 
$\cS_\cO$. 
Its image is obviously inert in $\CM^\otimes\times_{N\Fin_*}\cO^\otimes$.

\subsubsection{Property (2) of \cite{L.HA}, 2.3.3.6.}

Here  the assumptions of \ref{prp:apprx} on $\cO^\otimes$ will be used.

Let $a:x\to y$ be an object of $\cS_\cO$ and let $(\alpha,y)\in\CM^\otimes\times_{N\Fin_*}\cO^\otimes$
denote the image $\iota\circ\pi(a)$.

An active edge $\beta:(\gamma,z)\rTo(\alpha,y)$ in $\CM^\otimes\times_{N\Fin_*}\cO^\otimes$ 
is uniquely determined by an active edge $b:z\to y$  in $\cO^\otimes$, together with an element
$\gamma:p(x)\to p(z)$ in $\cS_\Com$ such that $p(a)=p(b)\circ\gamma$.

We have to find a cartesian lifting for $\beta$, that is a 2-simplex in 
$\cO^\otimes$
\begin{equation}
\begin{diagram}
& & z & & \\
& \ruTo^c & & \rdTo^b &\\
x && \rTo^a && y
\end{diagram},
\end{equation}
such that $p(c)=\gamma$ and satisfying a certain universal property.

In case $a$ is null, the map $\gamma$ is null and we choose $c$ to be the null 
map from $x$ to $z$. The required 2-simplex is now essentially unique as $\Map(0,y)$ is
contractible.

In case $a$ is not null let $y=\oplus y_i$ with $y_i\in\cO$, $z=\oplus z_i$
with $z_i\in\cO^\otimes$,\\
 $b=\oplus b_i:z_i\to y_i$.
Let, furthermore, $a$ be defined by an equivalence $a_k:x\to y_k$. Write 
$z_k=\oplus z^j$ where $z^j\in\cO$.
The map $\gamma:p(x)\to p(z)$ factors
through $\gamma':p(x)\to p(z_k)$ which singles out an element $j$ in $p(z_k)$. 
This allows one to produce
an arrow $z^j\to y_i$ in $\cO$ obtained from $b_i:z_i\to y_i$ by precomposing with units. It should
be equivalence as $\cO$ is Kan.
Choose a two-simplex $x\to z^j\to y_k$ with the described above edges 
$x\to y_k$ and $z^j\to y_k$. Adding to it an essentially unique triangle
$$ 0\rTo z\ominus z^j\rTo^0 y\ominus y_k, $$
we get a triangle with required properties
\footnote{Of course $y\ominus y_k=\oplus_{i\ne k}y_i$ and similarly for $z\ominus z^j$.}.

We claim that the edge in $\cS_\cO$ defined by the above two-simplex, is a 
$\iota\circ\pi$-cartesian lifting of $\beta:(\gamma,z)\rTo(p(a),y)$.

The map $f:\cS_\cO\rTo\CM^\otimes\times_{N\Fin_*}\cO^\otimes$ is a categorical 
fibration, so the criterion 2.4.4.3 of \cite{L.T} can be applied.

We have to check that for any $d:s\to w$ in $\cS_\cO$ the following homotopy
commutative diagram
\begin{equation}\label{eq:cart}
\begin{diagram}
\Map_{\cS_\cO}(d,c) & \rTo & \Map_{\cS_\cO}(d,a) \\
\dTo & & \dTo \\
\Map_{\CM^\otimes\times_{N\Fin_*}\cO^\otimes}(f(d),f(c)) & \rTo & 
\Map_{\CM^\otimes\times_{N\Fin_*}\cO^\otimes}(f(d),f(a)) 
\end{diagram}
\end{equation}
is homotopy cartesian\footnote{To make this formulation precise, one has to
replace the map spaces with their explicit representatives by Kan simplicial sets,
so that the diagram (\ref{eq:cart}) is commutative.}.

Since $\cS_\cO$ is a full subcategory of $\Fun(\Delta^1,\cO^\otimes)$, we can replace $\Map_{\cS_\cO}$
in the above diagram with $\Map_{\Fun(\Delta^1,\cO^\otimes)}$. Here the following easy lemma 
is very convenient.

\begin{lem}\label{lem:funmap}
Let $\cC$ be an $\infty$-category and $\cD=\Fun(\Delta^1,\cC)$.
Let $a:x\to y$ and $a':x'\to y'$ be two objects in $\cD$. Then one has a 
homotopy cartesian diagram
\begin{equation}
\begin{diagram}\label{eq:fund}
\Map_\cD(a,a') & \rTo & \Map_\cC(y,y') \\
\dTo & & \dTo \\
\Map_\cC(x,x') & \rTo & \Map_\cC(x,y')
\end{diagram}
\end{equation}
\end{lem}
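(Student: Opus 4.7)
The plan is to unravel $\Map_\cD(a,a')$ directly, using the interpretation of $\cD=\Fun(\Delta^1,\cC)$ as an arrow $\infty$-category. By the adjunction between $(-)\times\Delta^1$ and $\Fun(\Delta^1,-)$, an $n$-simplex of $\Map_\cD(a,a')$ is a map $\Delta^n\times\Delta^1\times\Delta^1\to\cC$ whose restrictions to $\Delta^n\times\Delta^1\times\{0\}$ and $\Delta^n\times\Delta^1\times\{1\}$ are the constant families on $a$ and $a'$. Hence
$$\Map_\cD(a,a')\;\simeq\;\Fun(\Delta^1\times\Delta^1,\cC)\times_{\Fun(\Delta^1\sqcup\Delta^1,\cC)}\{(a,a')\},$$
and the two projections of the square~(\ref{eq:fund}) correspond to restricting such a square to its top and left edges.

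Next, I would use the pushout decomposition $\Delta^1\times\Delta^1=\Delta^2\cup_{\Delta^1}\Delta^2$ along the diagonal edge $d:(0,0)\to(1,1)$. Applying $\Fun(-,\cC)$ turns this into a pullback, so a filling of our square in $\cC$ is equivalent to the data of a pair of $2$-simplices of $\cC$ sharing the diagonal $d$: one with two boundary edges $f:x\to x'$ and $a':x'\to y'$, and another with two boundary edges $a:x\to y$ and $g:y\to y'$. After fixing $a$ and $a'$ on the prescribed faces, $\Map_\cD(a,a')$ is presented as the homotopy pullback of the space of such $2$-simplices over the space of possible diagonals $d\in\Map_\cC(x,y')$.

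I would then invoke the standard fact (e.g.\ \cite{L.T}, Prop.~1.2.2.3 and its consequences) that for composable arrows $u:A\to B$, $v:B\to C$ in an $\infty$-category, the space of $2$-simplices with those two edges, equipped with the map recording the third edge, is a trivial fibration onto the appropriate corner of $\Map_\cC(A,C)$: the third edge together with the filler is contractibly classified by the composite. Applying this to both triangles eliminates the diagonal $d$ from the pullback description and identifies
$$\Map_\cD(a,a')\;\simeq\;\Map_\cC(x,x')\times^h_{\Map_\cC(x,y')}\Map_\cC(y,y'),$$
with structure maps being post-composition with $a'$ and pre-composition with $a$, as required.

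The main obstacle will be purely organizational: tracking which $2$-simplex variable classifies which composite, and verifying that, after contracting the diagonal, the two edges of the resulting pullback correspond correctly to the evaluation maps $e_0,e_1$ of $\cD\to\cC$ used in diagram~(\ref{eq:fund}). Everything reduces to the universal property of $2$-simplices as witnesses of composition, so no deep input beyond standard $\infty$-categorical technology from \cite{L.T} is required.
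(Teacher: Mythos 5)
Your proposal is correct and follows essentially the same route as the paper's proof: both decompose $\Delta^1\times\Delta^1$ as two $2$-simplices glued along the diagonal $(0,0)\to(1,1)$ and use the trivial fibration $\Fun(\Delta^2,\cC)\to\Fun(\Lambda^2_1,\cC)$ to identify the spaces of $2$-simplices with one prescribed edge with the corresponding mapping spaces. The paper merely organizes the same argument slightly more explicitly, introducing auxiliary models $\Map'_\cC(y,y')$, $\Map'_\cC(x,x')$ and a commutative cube of function complexes so as to produce a strictly commutative (rather than merely homotopy commutative) square whose fibers are the ones in~(\ref{eq:fund}).
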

\begin{proof}
The formulation of the lemma is imprecise: the diagram (\ref{eq:fund}) has to be replaced with an
explicit commutative diagram of spaces, similarly to the  diagram (\ref{eq:cart}).
It is described below.
By definition, $\Map_\cC(x,y')$ is realized as the fiber
of the categorical fibration
\footnote{See Lurie, \cite{L.T}, 4.2.1, this is the realization via
``alternative join"}
$$ \Fun(\Delta^1,\cC)\rTo\Fun(\partial\Delta^1,\cC)=\cC^2$$
at the point $(x,y')$. 
Similarly, $\Map_\cD(a,a')$ is realized as the fiber of 
$$ \Fun(\Delta^1\times\Delta^1,\cC)=\Fun(\Delta^1,\cD)\to \Fun(\partial\Delta^1,\cD)=
\Fun(\Delta^1,\cC)^2$$
at the point $(a,a')$.

Furthermore, we replace the space $\Map_\cC(y,y')$ with $\Map'_\cC(y,y')$
defined as the fiber of the map
$$ \Fun(\Delta^2,\cC)\rTo\Fun(\Delta^1,\cC)\times \cC,$$
induced by $\partial^2:\Delta^1\to \Delta^2$ and $\partial^0\partial^1:\Delta^0\to\Delta^2$,
at the point $(a,y')$. Similarly, we replace $\Map_\cC(x,x')$ with $\Map'_\cC(x,x')$
defined as the fiber of the map
$$\Fun(\Delta^2,\cC)\rTo\cC\times\Fun(\Delta^1,\cC),$$
induced by $\partial^0:\Delta^1\to \Delta^2$ and $\partial^1\partial^2:\Delta^0\to\Delta^2$,
at the point $(x,a')$.

The canonical maps $\Map'_\cC(y,y')\to\Map_\cC(y,y')$ and 
$\Map'_\cC(x,x')\to\Map_\cC(x,x')$ are trivial Kan fibrations
as they can be obtained by base change from the trivial fibration
\begin{equation}
\Fun(\Delta^2,\cC)\rTo\Fun(\Lambda^2_1,\cC).
\end{equation}
This, in particular, proves that all $\Map'$-spaces are
Kan.

The commutative square 
\begin{equation}
\begin{diagram}\label{eq:fund'}
\Map_\cD(a,a') & \rTo & \Map'_\cC(y,y') \\
\dTo & & \dTo \\
\Map'_\cC(x,x') & \rTo & \Map_\cC(x,y')
\end{diagram}
\end{equation}
replacing the diagram (\ref{eq:fund}), is now obtained from the following commutative cube

\begin{equation}
\begin{diagram}
\Fun(\Delta^1\times\Delta^1,\cC) & & \rTo^{} & & \Fun(\Delta^2,\cC) & & \\
& \rdTo_{} & & & \vLine^{} & \rdTo_{} & \\
\dTo^{} & & \Fun(\Delta^2,\cC) & \rTo^{} &\HonV&&\Fun(\Delta^1,\cC)\\
& & \dTo^{} & & \dTo & & \\
\Fun(\Delta^1,\cC)^2 & \hLine & \VonH & \rTo^{} &\Fun(\Delta^1,\cC)\times\cC & & \dTo_{} \\
& \rdTo_{} & & & & \rdTo_{} & \\
& & \cC\times\Fun(\Delta^1,\cC) & & \rTo^{} & & \cC^2 \\
\end{diagram}
\end{equation}
as the fiber along the vertical edges at 
$(a,a')\in\Fun(\Delta^1,\cC)^2$ and at its images.

Here the upper face is defined by the presentation of $\Delta^1\times\Delta^1$ as a union of two 2-simplices glued along
an edge. The upper and the lower faces are cartesian and homotopy cartesian. Therefore, the commutative square (\ref{eq:fund'}) of the fibers is also homotopy cartesian.
\end{proof}

\subsubsection{End of the proof of Property (2)}
\label{sss:apprx-2-end}

Recall that we have to verify that the diagram (\ref{eq:cart})
is homotopy cartesian.

Note that
\begin{eqnarray}\label{eq:bottoml}
\Map_{\CM^\otimes\times_{N\Fin_*}\cO^\otimes}(f(d),f(c))=\hspace{6cm}\\
\nonumber\Map_{\cO^\otimes}(w,z)\times_{\Hom_{\Fin_*}(p(w),p(z))}\Hom_{\cS_\Com}(p(d),p(c))=\\
\nonumber\Map_{\cO^\otimes}(w,z)\times_{\Hom_{\Fin_*}(p(s),p(z))}\Hom_{\Fin_*}(p(s),p(x)),
\end{eqnarray}
and similarly
\begin{eqnarray}\label{eq:bottomr}
\Map_{\CM^\otimes\times_{N\Fin_*}\cO^\otimes}(f(d),f(a))=\hspace{6cm}\\
\nonumber\Map_{\cO^\otimes}(w,y)\times_{\Hom_{\Fin_*}(p(s),p(y))}\Hom_{\Fin_*}(p(s),p(x)).
\end{eqnarray}

Let us first replace the diagram (\ref{eq:cart}) with a commutative diagram so that the claim
become formally meaningful. Similarly to what we did in the proof of Lemma~\ref{lem:funmap},
we replace in (\ref{eq:cart}) $\Map_{\cS_\cO}(d,c)$ and 
$$\Map_{\CM^\otimes\times_{N\Fin_*}\cO^\otimes}(f(d),f(c))=
\Map_{\cO^\otimes}(w,z)\times_{\Hom_{\Fin_*}(p(s),p(z))}\Hom_{\Fin_*}(p(s),p(x))$$
with homotopy equivalent versions, $\Map'_{\cS_\cO}(d,c)$ and 
$\Map'_{\cO^\otimes}(w,z)\times_{\Hom_{\Fin_*}(p(s),p(z))}\Hom_{\Fin_*}(p(s),p(x))$
where $\Map'_{\cS_\cO}(d,c)$ is the fiber of the map 
\begin{equation}\label{eq:prime-1}
\Fun(\Delta^2,\cS_\cO)\rTo \cS_\cO\times\Fun(\Delta^1,\cS_\cO)
\end{equation}
at $(d,\beta:c\to a)$ whereas $\Map'_{\cO^\otimes}(w,z)$ is the fiber of
\begin{equation}\label{eq:prime-2}
\Fun(\Delta^2,\cO^\otimes)\rTo\cO^\otimes\times\Fun(\Delta^1,\cO^\otimes)
\end{equation}
at $(w,b:z\to y)$.
The maps (\ref{eq:prime-1}) and (\ref{eq:prime-2}) are both induced by the embedding
$\Delta^0\sqcup\Delta^1\rTo\Delta^2$ defined by $\partial^1\partial^2$ and by $\partial^0$.

The diagram (\ref{eq:cart})\footnote{rotated by 90$^\circ$ to fit on the page} is now replaced with a commutative diagram
\begin{equation}\label{eq:cart'}
\begin{diagram}
\Map_{\cS_\cO}(d,c) & \rTo^{\pi_c} & \Map_{\cO^\otimes}(w,z)\times_{\Hom_{\Fin_*}(p(s),p(z))}
\Hom_{\Fin_*}(p(s),p(x))\\ 
\uTo^\simeq & & \uTo^\simeq \\
\Map'_{\cS_\cO}(d,c) & \rTo^{\pi'_c} &\Map'_{\cO^\otimes}(w,z)\times_{\Hom_{\Fin_*}(p(s),p(z))} 
\Hom_{\Fin_*}(p(s),p(x))\\ 
\dTo & & \dTo \\
\Map_{\cS_\cO}(d,a) & \rTo^{\pi_a} & \Map_{\cO^\otimes}(w,y)\times_{\Hom_{\Fin_*}(p(s),p(y))} 
\Hom_{\Fin_*}(p(s),p(x)) 
\end{diagram}
\end{equation}
The upwards arrows are weak equivalences; we will prove that the lower commutative square
is homotopy cartesian, considering 
separately the cases $a=0$ and $a\ne 0$ \footnote{we denote by $0$ a null map
which exists and is essentially unique for any choice of source and target.}.

In case $a$ is non-null, the edge $c$ is also non-null. In this case we will 
check that the horizontal arrows $\pi_c$ and $\pi_a$ of (\ref{eq:cart'}) are equivalences. 
The case $a=0=c$ will be verified separately.

{\bf Case $a\ne 0$.}  

We will prove that if $d:s\to w$ and $a:x\to y$ are in $\cS_\cO$ so that 
$a\ne 0$, then the natural map
$$\pi_a:\Map_{\cS_\cO}(d,a)\rTo
\Map_{\cO^\otimes}(w,y)\times_{\Hom_{\Fin_*}(p(s),p(y))}\Hom_{\Fin_*}(p(s),p(x))$$
is an equivalence.

The proof goes as follows. 
We replace $\Map_{\cS_\cO}$ with $\Map_{\Fun(\Delta^1,\cO)}$ and use Lemma~\ref{lem:funmap}
to express it as a fiber product. We have to check 
therefore that the map
\begin{eqnarray}
\Map'_{\cO^\otimes}(w,y)\times_{\Map_{\cO^\otimes}(s,y)}\Map'_{\cO^\otimes}(s,x)\rTo\hspace{3cm} \\
\nonumber\Map_{\cO^\otimes}(w,y)\times_{\Hom_{\Fin_*}(p(s),p(y))}\Hom_{\Fin_*}(p(s),p(x))
\end{eqnarray}
is an equivalence, where the notation for $\Map'$ is as in Lemma~\ref{lem:funmap}.

 Since the map
$$\Map'_{\cO^\otimes}(w,y)\rTo\Map_{\cO^\otimes}(w,y)$$
is a trivial fibration, it is sufficient to check that the diagram 
\begin{equation}
\begin{diagram}
\Map'_\cO(s,x) & \rTo & \Map_{\cO^\otimes}(s,y) \\
\dTo & & \dTo \\
\Hom_{\Fin_*}(p(s),p(x))& \rTo & \Hom_{\Fin_*}(p(s),p(y))
\end{diagram}
\end{equation}
is cartesian if $a\ne 0$. In other words, we have to check that for any map
$e:p(s)\to p(x)$ (there are two such maps as $p(s)=p(x)=\langle 1\rangle$)
the fiber of the left vertical map at $e$ is equivalent to the fiber of the
right vertical map at $p(a)\circ e$. If $e=0$, both fibers are contractible.
Otherwise write $y=\oplus y_i$ with $y_i\in\cO$, so that $a$ is determined by an equivalence
$a_k:x\to y_k$ for some $k$. Then the fibers are equivalent respectively to $\Map'_\cO(s,x)$
and to $\Map_\cO(s,y_k)$, that is, equivalent to each other.

{\bf Case $a=0$.}


We will show that the lower commutative square in (\ref{eq:cart'}) is equivalent to the
following diagram
\begin{equation}\label{eq:cart0}
\begin{diagram}
\Map_\cO(s,x) \times F_c & \rTo & \Hom_{\Fin_*}(p(s),p(x))\times F_c \\
\dTo & & \dTo \\
\Map_\cO(s,x) \times F_a & \rTo & \Hom_{\Fin_*}(p(s),p(x))\times F_a
\end{diagram}
\end{equation}
for appropriately chosen $F_c$ and $F_a$. This will imply the claim.

We proceed as follows. 

Define $\bar F_a$ as the fiber of the map $\Map_{\cO^\otimes}(w,y)\rTo\Hom_{\Fin_*}(p(s),p(y))$
at zero. Then the target of $\pi_a$ in the diagram (\ref{eq:cart'}) identifies with
$\bar F_a\times\Hom_{\Fin_*}(p(s),p(x))$. Similarly, we define $\bar F_c$ as the fiber
of the map $\Map'_{\cO^\otimes}(w,z)\rTo\Hom_{\Fin_*}(p(s),p(z))$
at zero. This will identify the target of $\pi'_c$ in the diagram (\ref{eq:cart'}) with
$\bar F_c\times\Hom_{\Fin_*}(p(s),p(x))$.

The projections $s,t:\cS_\cO\to\cO^\otimes$ yield the maps
\begin{equation}
s_a:\Map_{\cS_\cO}(d,a)\rTo\Map_\cO(s,x),\quad
t_a:\Map_{\cS_\cO}(d,a)\rTo\Map'_{\cO^\otimes}(w,y),
\end{equation}
where $\Map'_{\cO^\otimes}(w,y)$ is defined as the fiber of the map
$$ \Fun(\Delta^2,\cO^\otimes)\rTo \Fun(\Delta^1,\cO^\otimes)\times\cO^\otimes$$
defined by $\partial^2:\Delta^1\to\Delta^2$ and $\partial^0\partial^0:\Delta^0\to\Delta^2$, 
at $(d,y)$. The composition 
$$ 
\Map_{\cS_\cO}(d,a)\rTo^{t_a}\Map'_{\cO^\otimes}(w,y)\rTo\Map_{\cO^\otimes}(s,y)
$$ is zero. We can therefore define $F_a$ as the fiber of
\begin{equation}
\Map'_{\cO^\otimes}(w,y)\rTo\Map_{\cO^\otimes}(s,y)
\end{equation}
at $0$ (at the contractible space of null maps), and get a canonical map
\begin{equation}
\Map_{\cS_\cO}(d,a)\rTo \Map_\cO(s,x)\times F_a.
\end{equation}
One easily sees this is an equivalence. 
Note that one has a canonical map $F_a\to \bar F_a$ which is a trivial Kan fibration.

Similarly, one has a pair of maps
\begin{equation}
s_c:\Map'_{\cS_\cO}(d,c)\rTo\Map_\cO(s,x),\quad
t_c:\Map_{\cS_\cO}(d,c)\rTo\Map^{\prime\prime}_{\cO^\otimes}(w,z),
\end{equation}
with $\Map^{\prime\prime}_{\cO^\otimes}(w,z)$ defined as the fiber of the map
\begin{equation}
\Fun(\Delta^3,\cO^\otimes)\rTo\Fun(\Delta^1,\cO^\otimes)^2
\end{equation}
given by $\partial^2\partial^3:\Delta^1\to\Delta^3,\quad \partial^0\partial^0:\Delta^1\to\Delta^3$, at $(d,b)$.

Once more the composition
$$ 
\Map'_{\cS_\cO}(d,c)\rTo^{t_c}\Map^{\prime\prime}_{\cO^\otimes}(w,z)\rTo\Map_{\cO^\otimes}(s,z)
$$ 
is zero, so we define $F_c$ as the fiber of
\begin{equation}
\Map^{\prime\prime}_{\cO^\otimes}(w,z)\rTo\Map_{\cO^\otimes}(s,z)
\end{equation}
at $0$   and get a canonical map
\begin{equation}
\Map'_{\cS_\cO}(d,c)\rTo \Map_\cO(s,x)\times F_c.
\end{equation}
It is also an equivalence and the map $F_c\to\bar F_c$ is a trivial Kan fibration.

We are done.

\

\subsubsection{}
\label{sss:end}
Having checked that $\iota$ and $\iota\circ\pi$ are approximations, we 
can now deduce from Theorem 2.3.3.23(2) of \cite{L.HA} that both $\iota$ and $\iota\circ\pi$
are weak equivalences.


\begin{thebibliography}{MMMM}
\bibitem[BM1]{BM0} C.~Berger, I.~Moerdijk, Axiomatic homotopy theory
for operads, Comm. Math. Helv., 78(2003), 805--831.
\bibitem[BM2]{BM} C.~Berger, I.~Moerdijk, Resolution of coloured oprerads and rectification of homotopy algebras, Categories in algebra, geometry and mathematical physics, 31–-58, Contemp. Math., 431, Amer. Math. Soc., Providence, RI, 2007,  arXiv:math/0512576.
\bibitem[BM3]{BM3} C.~Berger, I.~Moerdijk, On the derived category of an algebra over an operad. Georgian Math. J. 16 (2009), no. 1, 13-–28, arXiv:0801.2664.
\bibitem[C]{C} K.~Costello, Topological conformal field theories and Calabi-Yau categories,
Adv. Math. 210 (2007), no. 1, 165–-214,  arXiv:math/0412149.  
\bibitem[H]{haha} V.~Hinich, Homological algebra of homotopy algebras,  Comm. Algebra 25 (1997), no. 10, 3291–-3323,  arXiv:q-alg/9702015.
\bibitem[H.err]{haha-err} V.~Hinich, Erratum to ``Homological algebra of homotopy algebras", preprint  arXiv:math/0309453.
\bibitem[H.L]{H.L} V.~Hinich, Dwyer-Kan localization revisited,
to appear in Homology, Homotopy and Applications,  arXiv: 1311.4128
\bibitem[H.V]{virtual}V.~Hinich, Virtual operad algebras and realization of homotopy types, 
J. Pure Appl. Algebra 159 (2001), no. 2--3, 173-185, arXiv:math/9907115.
\bibitem[H.DSA]{DSA} V. Hinich, Deformations of sheaves of algebras, Adv. Math. 195(2005), no. 1, 102--164, arXiv:math/0310116.
\bibitem[Hir]{hirsch} P.~Hirschhorn, Model categories and their localizations,  Mathematical Surveys and Monographs, 99. AMS, Providence, RI, 2003. xvi+457 pp.
\bibitem[L.HA]{L.HA} J.~Lurie, Higher algebra, preprint August 3, 2012,
available at
http://www.math.harvard.edu/\textasciitilde{}lurie/papers/HigherAlgebra.pdf.
\bibitem[L.T]{L.T} J.~Lurie, Higher topos theory, Annals of 
Mathematics Studies, 170. Princeton University Press, Princeton, NJ,
 2009. xviii+925 pp, also available at
 http://www.math.harvard.edu/\textasciitilde{}lurie/papers/croppedtopoi.pdf.
\bibitem[L.X]{L.X} J.~Lurie, Derived algebraic geometry, X, available
 at http://www.math.harvard.edu/\textasciitilde{}lurie/papers/DAG-X.pdf.
\bibitem[Mu]{Mu} F.~Muro, Homotopy theory of nonsymmetric operads, Algebr. Geom. Topol. 11 (2011), no. 3, 1541–-1599, arXiv:1101.1634.
\bibitem[SS00]{SS00}S.~Schwede, B.~Shipley, Algebras and modules in monoidal model categories. Proc. London Math. Soc. (3) 80 (2000), no. 2, 491–-511,
 arXiv:math/9801082.
\bibitem[SS03]{SS} S.~Schwede, B.~Shipley, Equivalence of monoidal model categories, Algebraic and geometric topology, 3(2003), 287--334,
arXiv:math/0209342
\bibitem[T1]{T1} G.~Tabuada, Th\'eorie homotopique des DG-cat\'egories,
th\`ese Univ. Paris 7, arXiv:0710.4303.
\bibitem[T2]{T2} G.~Tabuada, Differential graded versus simplicial
categories, Topology Appl. 157 (2010), no. 3, 563--593,
arXiv:0711.3845.


\end{thebibliography}
\end{document}